\newcounter{dummy} \numberwithin{dummy}{section}
\newtheorem{la}[dummy]{Lemma}
\newtheorem{prop}[dummy]{Proposition}
\newtheorem{theorem}[dummy]{Theorem}
\newtheorem{corollary}[dummy]{Corollary}
\newtheorem{lemma}[dummy]{Lemma}
\newtheorem{definition}[dummy]{Definition}
\theoremstyle{remark}
\newtheorem{rem}[dummy]{Remark}
\newtheorem{exa}[dummy]{Example}
\newtheorem{remark}[dummy]{Remark}
\newtheorem{example}[dummy]{Example}
\renewcommand{\C}{{\mathbb C}}
\newcommand{\N}{{\mathbb N}}
\newcommand{\R}{{\mathbb R}}
\newcommand{\cA}{{\mathcal A}}
\newcommand{\cI}{{\mathcal I}}
\DeclareMathOperator{\pr}{pr}
\DeclareMathOperator{\rank}{rank}
\DeclareMathOperator{\spn}{span}
\DeclareMathOperator{\so}{\mathfrak{so}}
\newcommand{\bx}{\mathbf{x}}
\newcommand{\ve}{\varepsilon}
\newcommand{\threeN}{| \! | \! |}
\newcommand{\LB}[1][\cdot \hspace{1pt} , \cdot]{\left[\hspace{1pt} #1 \hspace{1pt} \right]}
\newcommand{\Frechet}{Fr\'echet }
\DeclareMathOperator{\Sch}{Sch}
\DeclareMathOperator{\Length}{Length}
\DeclareMathOperator{\GN}{{\it G^{N}}}
\DeclareMathOperator{\Lf}{\mathbf{L}}
\DeclareMathOperator{\LP}{\mathcal{P}}
\DeclareMathOperator{\Pro}{Pr}
\newcommand{\coloneq}{\colonequals}
\newcommand{\bGamma}{\boldsymbol \Gamma}
\DeclareMathOperator{\im}{Im}
\numberwithin{equation}{section}
\title[Geometric rough paths on infinite dimensional spaces]{Geometric rough paths on infinite dimensional spaces}
\author[E.~Grong, Torstein Nilssen and A.~Schmeding]{Erlend Grong, Torstein Nilssen and Alexander Schmeding}
\address{University of Bergen, Department of Mathematics, P.O.~Box 7803, 5020 Bergen, Norway}
\email{erlend.grong@uib.no}
\address{Department of Mathematics, University of Agder, P.O.~Box 422, 4604 Kristiansand, Norway}
\email{torstein.nilssen@uia.no}
\address{FLU, Nord university, Høgskoleveien 27, 7601 Levanger, Norway}
\email{alexander.schmeding@nord.no}
\subjclass[2020]{22E65, 53C17, 60H10, 60L20, 60L50}
\keywords{Rough paths on Banach spaces, geometric rough paths, Wong-Zakai result for rough flows, infinite-dimensional Lie groups, Carnot-Carath\'eodory geometry}
\begin{document}

\begin{abstract}
Similar to ordinary differential equations, rough paths and rough differential equations can be formulated in a Banach space setting. For $\alpha\in (1/3,1/2)$, we give criteria for when we can approximate Banach space-valued weakly geometric $\alpha$-rough paths by signatures of curves of bounded variation, given some tuning of the H\"older parameter. We show that these criteria are satisfied for weakly geometric rough paths on Hilbert spaces. As an application, we obtain Wong-Zakai type result for function space valued martingales using the notion of (unbounded) rough drivers.
\end{abstract}

\maketitle
\tableofcontents

\section{Introduction}

The theory of rough paths was invented by T.\ Lyons in his seminal article \cite{Lyons98} and provides a fresh look at integration and differential equations driven by rough signals. 
A rough path consists of a H\"older continuous path in a vector space together with higher level information satisfying certain algebraic and analytical properties.
The algebraic identities in turn allow one to conveniently formulate a rough path as a path in nilpotent groups of truncated tensor series, cf.\ \cite{FaV06} for a detailed account.
Similar to the well-known theory of ordinary differential equations, it makes sense to formulate rough paths and rough differential equations with values in a Banach space, \cite{CaLaL07}. It is expected that the general theory carries over to this infinite-dimensional setting, yet  a number of results which are elementary cornerstones of rough path theory are still unknown in the Banach setting. 

In \cite{BaR19} the authors introduce the notion of a \emph{rough driver}, which are vector fields with an irregular time-dependence. Rough drivers provides a somewhat generalized description of necessary conditions for the well-posedness of a rough differential equation and the authors use this for the construction of flows generated by these equations. The push-forward of the flow, at least formally, satisfies a (rough) partial differential equation, and this equation is studied rigorously in \cite{BaGu15} where the authors introduce the notion of \emph{unbounded rough drives}. This theory was further developed in \cite{DeGuHoTi16,HH17,HN} in the linear setting (although \cite{DeGuHoTi16} also tackles the kinetic formulation of conservation laws) as well as nonlinear perturbations in \cite{CHLN,HLN18,HLN19,hocquet2018quasilinear,HNS}. Still, the unbounded rough drivers studied in these papers assume a factorization of time and space in the sense that the vector fields lies in the \emph{algebraic tensor} of the time and space dependence. 

Our main motivation for this paper is the observation in \cite{CaN19} that rough drivers can be understood as rough paths taking values in the space of sufficiently smooth functions, see Section \ref{sec:WongZakai}. Moreover, in \cite{CaN19} the authors needed unbounded rough drivers for which the factorization of time and space was \emph{not} valid, and in particular approximating the unbounded rough driver by smooth drivers. In finite dimensions, sufficient conditions that guarantee the existence of smooth approximations can be easily checked and is the so-called \emph{weakly geometric rough paths}. In \cite{CaN19} and ad-hoc method was introduced to tackle the lack of a similar result in infinite dimensions. For other papers dealing with infinite-dimensional rough paths, let us also mention \cite{Dereich10,Bai14,CDLL16}.

In the present paper we address the characterization of weakly geometric rough paths in Banach spaces. Our aims are twofold. Firstly, we describe and develop the infinite-dimensional geometric framework for Banach space-valued rough paths and weakly geometric rough paths. These rough paths take their values in infinite-dimensional groups of truncated tensor products. Some care needs to be taken in this setting, as the tensor product of two Banach spaces will depend on choice of of norm on the product. Secondly, we characterize the geometric rough paths that take their values in an Hilbert space and their relationship to weakly geometric rough paths. Our main result is to prove the following well-known relationship for finite dimensional rough paths in an infinite dimensional setting. Recall that for $\alpha \in (1/3,1/2)$, a geometric $\alpha$-rough path is the is an element of the closure in signatures $S^2(x)_{st} =1 + x_t - x_s + \int_s^t (x_r - x_s) \otimes dx_r$ of curves $x_t$ of bounded variation, while an $\alpha$-rough path $\bx_{st} = 1 + x_{st} + x^{(2)}_{st}$ is called weakly geometric if the symmetric part of $x^{(2)}_{st}$ equals $\frac{1}{2} x_{st} \otimes x_{st}$; a property that holds for all geometric rough paths in particular by an integration by parts argument. Our main result is the following.
\begin{theorem} \label{th:mainHilbert}
For $\alpha \in (1/3, 1/2)$, let $\mathscr{C}^\alpha_g([0,T],E)$ and $\mathscr{C}^\alpha_{wg}([0,T],E)$ denote respectively geometric rough paths and weakly geometric rough paths in a Hilbert space $E$, defined on the interval $[0,T]$ and relative to the Schatten $p$-norm, $1 \leq p \leq \infty$ on $E \otimes E$. Then for any $\beta \in (1/3, \alpha)$, we have inclusions
$$
\mathscr{C}_g^{\alpha}([0,T], E) \subset \mathscr{C}^{\alpha}_{wg}([0,T],E)  \subset \mathscr{C}_g^{\beta}([0,T], E)  .
$$
\end{theorem}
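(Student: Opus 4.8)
The plan is to prove the two inclusions separately; the first is soft and the second carries all the content. For $\mathscr{C}^\alpha_g([0,T],E)\subseteq\mathscr{C}^\alpha_{wg}([0,T],E)$ I would show that (i) the signature of every curve $x$ of bounded variation is weakly geometric, and (ii) the weakly geometric condition is closed in the $\alpha$-H\"older rough path metric. For (i): the tensor integral in $S^2(x)_{st}=1+x_{st}+\int_s^t(x_r-x_s)\otimes dx_r$ converges in $\|\cdot\|_{S_p}$ since this is a reasonable cross norm on $E\otimes E$, and integration by parts gives $\int_s^t(x_r-x_s)\otimes dx_r+\int_s^t dx_r\otimes(x_r-x_s)=x_{st}\otimes x_{st}$, so the symmetric part of the second level is $\tfrac12 x_{st}\otimes x_{st}$. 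For (ii): symmetrisation $A\mapsto\tfrac12(A+A^*)$ has norm one on $(E\otimes E,\|\cdot\|_{S_p})$, and $v\mapsto v\otimes v$ is continuous from $E$ into $(E\otimes E,\|\cdot\|_{S_p})$ (indeed $\|v\otimes v\|_{S_p}=|v|^2$ for every $p$, as $v\otimes v$ has rank one), so the map $\bx\mapsto\Sym x^{(2)}_{st}-\tfrac12 x_{st}\otimes x_{st}$ is continuous and vanishes on the closure of the set of signatures.

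For the second inclusion I would reduce to finite dimensions. Fix a weakly geometric $\alpha$-rough path $\bx$ and $\beta\in(1/3,\alpha)$. Replacing $E$ by the closed linear span of the image of $x$ together with the ranges of the operators $x^{(2)}_{st}$ (which are compact, hence each has separable range), we may assume $E$ is separable; choose an orthonormal basis and let $\pi_n$ be the orthogonal projections onto its first $n$ vectors, so that $\pi_n\to\mathrm{id}_E$ strongly. Push $\bx$ forward to $\bx^n\coloneq(\pi_n)_*\bx$, whose second level is $(\pi_n\otimes\pi_n)x^{(2)}$. This is again a weakly geometric $\alpha$-rough path, now over the finite-dimensional Hilbert space $E_n=\pi_n(E)$: the map $\pi_n\otimes\pi_n$ sends $a\otimes b$ to $(\pi_n a)\otimes(\pi_n b)$, so $(\pi_n)_*$ is a homomorphism of step-two groups and preserves Chen's relation; it commutes with both $\Sym$ and $v\mapsto v\otimes v$, hence preserves the weakly geometric identity; and, $\pi_n$ and $\pi_n\otimes\pi_n$ being contractions for $|\cdot|$ and $\|\cdot\|_{S_p}$ respectively, it preserves the H\"older bounds.

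Over $E_n$ I would then invoke the classical finite-dimensional geodesic approximation theorem (Friz--Victoir): a weakly geometric $\alpha$-rough path lies, for every H\"older exponent below $\alpha$, in the closure of the set of signatures of bounded-variation curves; running this with $\|\cdot\|_{S_p}$ as the working norm on $E_n\otimes E_n$ (legitimate, since all norms there are equivalent) puts each $\bx^n$ in $\mathscr{C}^\beta_g([0,T],E)$. Finally I would pass to the limit $n\to\infty$. The error increments $(\mathrm{id}-\pi_n)x_{st}$ and $(\mathrm{id}-\pi_n\otimes\pi_n)x^{(2)}_{st}$ are bounded in $\alpha$-H\"older norm uniformly in $n$, and tend to $0$ uniformly in $(s,t)$ because $x([0,T])$ and the image of $(s,t)\mapsto x^{(2)}_{st}$ are compact while $\pi_n\to\mathrm{id}$ and $\pi_n\otimes\pi_n\to\mathrm{id}$ strongly on $E$, resp.\ on the Schatten $p$-class (the case $p=\infty$ using compactness of the $x^{(2)}_{st}$). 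The interpolation inequality $\|g\|_\beta\lesssim\|g\|_\alpha^{\beta/\alpha}\|g\|_\infty^{1-\beta/\alpha}$ forces both errors to $0$ in $\beta$-H\"older norm, so $\bx^n\to\bx$ in the $\beta$-H\"older rough path metric; since $\mathscr{C}^\beta_g([0,T],E)$ is by definition closed there, $\bx\in\mathscr{C}^\beta_g([0,T],E)$.

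The main obstacle lies entirely in the second inclusion. On the route above, the delicate points are that the finite-dimensional approximation must converge in the Schatten $p$-topology rather than in some norm whose equivalence constants degenerate with $\dim E_n$---circumvented by fixing $\|\cdot\|_{S_p}$ as the working norm throughout---and the strong convergence $\pi_n\otimes\pi_n\to\mathrm{id}$ on the Schatten class, which rests on compactness of the second-level operators and hence on the choice of the Schatten norm. I expect the genuinely hard point, and the reason the result is stated for Hilbert spaces and presumably where the Carnot--Carath\'eodory machinery of the paper is needed, to be the intrinsic argument free of projections: producing bounded-variation horizontal curves in the step-$2$ group $G^2(E)$ joining prescribed points with length controlled by the homogeneous norm of the increment. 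The subtlety there is that the Carnot--Carath\'eodory distance on $G^2(E)$ turns out to be comparable to the trace-norm ($S_1$) homogeneous norm irrespective of the chosen $p$, so a naive geodesic interpolation need not converge in $\|\cdot\|_{S_p}$ for $p>1$; the finite-dimensional reduction sidesteps this, whereas the general Banach-space criterion of the paper must confront it directly.
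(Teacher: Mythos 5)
Your proposal is essentially correct, but it takes a genuinely different route from the paper. The paper proves an abstract criterion (Theorem~\ref{thm:geometric rough paths}) phrased in terms of the Carnot--Carath\'eodory metric on the infinite-dimensional group $G^2(E)$: one must show $d\lesssim\rho$, that $(G^2_{cc}(E),\rho)$ is a complete geodesic space, and that $\rho$-continuous paths are $d_\beta$-dense; most of the paper (the dimension-free inequality of Theorem~\ref{th:Dindep}, the compactness and closedness lemmas, Theorem~\ref{th:main}) verifies the first two conditions, only the density condition is handled by a finite-dimensional projection argument (Section~\ref{sec:ProofHilbert}), and the approximating signatures are then produced by the Friz--Victoir geodesic-space lemma applied inside $(G^2_{cc}(E),\rho)$. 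You bypass the infinite-dimensional CC geometry entirely: after reducing to separable $E$, you push the rough path forward under finite-rank orthogonal projections $\pi_n$ (homomorphisms of the step-two structure, contractive for $|\cdot|$ and $\|\cdot\|_{S_p}$, exactly as in Lemma~\ref{lemma:ProjSch}), invoke the classical finite-dimensional inclusion $\mathscr{C}^\alpha_{wg}\subset\mathscr{C}^\beta_g$ over $E_n$, and recover $\bx$ by uniform convergence (compactness of the images of $x$ and $x^{(2)}$ plus strong convergence of $A\mapsto\pi_nA\pi_n$ on the Schatten class, with $p=\infty$ saved by compactness of the operators) followed by interpolation and a diagonal extraction; your first inclusion matches the paper's observation that \eqref{weak geometric} is stable under limits. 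The points you leave implicit --- joint continuity of $(s,t)\mapsto x^{(2)}_{st}$ via Chen's relation, which is what makes the image compact; the passage from level-wise $\beta$-H\"older convergence to convergence in the homogeneous metric $d_\beta$; and that for each fixed $n$ finite-dimensional norm equivalence suffices, since only convergence and not uniformity in $n$ is needed there --- are routine. What your shortcut buys is economy: Theorem~\ref{th:mainHilbert} for Schatten norms follows without Theorem~\ref{th:main}. What it gives up is precisely the paper's broader aim: the criteria of Theorem~\ref{thm:geometric rough paths} are formulated for general Banach spaces and cross norms, where contractive projections onto arbitrary subspaces are unavailable (cf.\ Section~\ref{subsect: Banach}), and the geodesic structure of $(G^2_{cc}(E),\rho)$ with the comparison $\threeN\cdot\threeN\leq\rho\leq3\threeN\cdot\threeN$ --- which your closing remark about $\rho$ being governed by a trace-type homogeneous norm correctly anticipates --- is a result of independent interest.
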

We emphasize that this result includes the Hilbert-Schmidt norm, projective tensor norm and injective tensor norm as respectively $p$ equal to $2$, $1$ and $\infty$.

The structure of the paper is as follows. In Section~\ref{section: Infdim:RP} we review the infinite-dimensional framework for rough paths with values in Banach spaces.
We continue with a presentation of Banach space-valued $\alpha$-rough paths for $\alpha \in (\frac{1}{3}, \frac{1}{2})$ in Section~\ref{section: App:RP}. This leads to the three prerequisite assumptions in Theorem~\ref{thm:geometric rough paths} which states when weakly geometric rough paths can be approximated by signatures of bounded variation path after some tuning of the H\"{o}lder parameter. In Section~\ref{sec:WongZakai}, we apply Theorem~\ref{th:mainHilbert} to prove Wong-Zakai type results for rough flows; a rough generalization of flows of time-dependent vector fields. This yields a concrete application for rough paths on infinite dimensional space.

The remainder of the paper is dedicated to proving Theorem~\ref{th:mainHilbert} by showing that the criteria of Theorem~\ref{thm:geometric rough paths} are indeed satisfied in the Hilbert space setting. All of these criteria depends on considering Carnot-Carath\'eodory geometry or sub-Riemannian geometry of our infinite dimensional groups. Section~\ref{section:CCHilbert} establishes the necessary prerequisite results from finite dimensional Hilbert spaces. We then do the proof of Theorem~\ref{th:mainHilbert} in several steps throughout Section~\ref{sec:hilbert spaces}, including a result in Theorem~\ref{th:main} where we prove that the Carnot-Carath\'eodory metric on the free step~2 nilpotent group generated by a Hilbert space becomes a geodesic distance when restricted to the subset of finite distance from the identity. We conclude the proof of Theorem~\ref{th:mainHilbert} in Section~\ref{sec:ProofHilbert}.

\section{The infinite-dimensional framework for rough paths}\label{section: Infdim:RP}

\subsection{Tensor products of Banach spaces}\label{tensornorm}
If $E$ and $F$ are two Banach spaces, we write $E\otimes_a F$ for their algebraic tensor product. We use the convention that $E^{\otimes_a 0}=\mathbb{R}$. For any $k \geq 0$ we endow the $k$-fold algebraic tensor product $E^{\otimes_a k}$ with a family of norms $\lVert \cdot \rVert_k$ satisfying the following conditions, cf. \cite{BaGaLaY}.
\begin{enumerate}[\rm 1.]
\item  For every $a \in E^{\otimes_a k}, b \in  E^{\otimes_a \ell}$, we have 
$$\lVert a \otimes b\rVert_{k+\ell} \leq \lVert a \rVert_k \cdot \lVert b\rVert_\ell.$$
\item For any permutation $\sigma$ of the integers $1,2 \dots k$ and for any $x_1, \ldots , x_k \in E$,
$$\lVert x_1 \otimes x_2 \otimes \cdots \otimes x_k\rVert_k = \lVert x_{\sigma (1)} \otimes \cdots \otimes x_{\sigma (k)}\rVert_k.$$
\end{enumerate}
Inductively, for $k,\ell \in \N$ we define the spaces $E^{\otimes k} \otimes E^{\otimes \ell}$ as the completion of $E^{\otimes k} \otimes_a E^{\otimes \ell}$
with respect to the norm $\lVert \cdot \rVert_{k+\ell}$. From the inclusions
$$E^{\otimes_a (k+\ell)} \subseteq E^{\otimes k} \otimes_a E^{\otimes \ell} \subseteq E^{\otimes (k+\ell)}$$
it follows that $E^{\otimes k} \otimes E^{\otimes \ell} \cong E^{\otimes (k+\ell)}$ as Banach spaces.

\begin{exa}\label{exa:proj}
The projective tensor product of Banach spaces is the completion of the algebraic tensor product with respect to the projective tensor norm 
$$\textstyle \lVert z \rVert_{\pi} := \inf \left\{ \sum_{i=1}^n \lVert x_i\rVert_E \lVert y_i\rVert_F \, : \,z=\sum_{i=1}^n x_i \otimes y_i \right\}.$$
It is well known that the projective tensor norm satisfies properties 1.\ and 2.\ since it is a reasonable crossnorm on $E \otimes_a F$ (cf.\ \cite[Section 6]{Ryan02}). 
Similarly, the injective tensor norm, defined by
$$\| z \|_\epsilon = \textstyle \sup \left\{ \left|\sum_{i=1}^n \varphi(x_i) \psi(y_i) \right| \, : \, \varphi \in E^*, \psi \in F^*, \| \varphi\| = \|\psi \| = 1, z=\sum_{i=1}^n x_i \otimes y_i \right\} .$$
satisfies 1. and 2. Its completion is the injective tensor product \cite[Section 3]{Ryan02}.

If $E$ is a Hilbert space, then we can identify $E \otimes_a E$ with finite rank operators from $E$ to itself. In this case, the projective and injective norm of $z \colon E \to E$ correspond respectively to the trace norm and the operator norm. Moreover, this identification allows one to identify the projective tensor as the space of nuclear operators $\mathcal{N}(E,E)$ and the injective tensor product as the space of compact operators $\mathcal{K}(E,E)$, see \cite[Corollary 4.8 and Corollary 4.13]{Ryan02} for details.
\end{exa}

\subsection{Algebra of truncated tensor series}\label{subsect:truncalg}
For $N \in \N_0 \cup \{\infty\}$, we define
$$\cA_N := \prod_{k=0}^N E^{\otimes k}$$
as the \emph{the space of (truncated) formal tensor series of $E$}. 
Elements in $\cA_N$ will be denoted as sequences $(x^{(k)})_{k \leq N}$. A sequence concentrated in the $k$-th factor $E^{\otimes k}$ is called \emph{homogeneous of degree $k$}. 
The set $\cA_N$ is an algebra with respect to degree wise addition and the multiplication
$$(x^{(k)})_{k\leq N} \cdot (y^{(k)})_{k\leq N} := \left(\sum_{n+m =k}x^{(n)} \otimes y^{(m)}\right)_{k\leq N}.$$
The algebras $\mathcal{A}_N$ turn out to be Banach algebras for $N$ finite. For $N=\infty$ they are still continuous inverse algebras (CIAs), i.e.~topological algebras such that inversion is continuous and the unit group is an open subset. CIAs and their unit groups can be seen as an infinite-dimensional generalization of matrix algebras and their unit Lie groups. In the case of locally convex spaces more general then Banach spaces (such as $\mathcal{A}_\infty$), we adopt the notion of Bastiani calculus to define smooth maps. This means that we require the existence and continuity of directional derivatives, see \cite{Glo03,keller74} for more information. The relevant results on tensor algebras and their unit groups are summarized in the following result.
\begin{la} \label{lemma:BanachAlgebra}
 The algebra $\cA_N$ is a Banach algebra for $N < \infty$, while $\cA_\infty$ is a \Frechet algebra. Moreover, $\cA_N$ is a continuous inverse algebra whose group of units $\cA_N^\times$ is a $C^0$-regular infinite-dimensional Lie group for any $N \in \N \cup \{\infty\}$.
\end{la}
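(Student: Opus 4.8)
The plan is to prove the three assertions in order, the only nontrivial input being property~1 of the norms $\lVert\cdot\rVert_k$ (property~2, the permutation symmetry, plays no role here). First I would make the topology explicit: for $N<\infty$ equip $\cA_N$ with the norm $\lVert x\rVert=\sum_{k\le N}\lVert x^{(k)}\rVert_k$, and for $N=\infty$ equip $\cA_\infty$ with the increasing family of seminorms $q_n(x)=\sum_{k\le n}\lVert x^{(k)}\rVert_k$; in either case this induces the product topology, which is complete, so $\cA_N$ is a Banach space for $N<\infty$ and a \Frechet space for $N=\infty$. Since the degree-$m$ part of a product $xy$ involves only the degree-$\le m$ parts of the factors, property~1 gives $q_n(xy)\le q_n(x)q_n(y)$ for all $n$ (and $\lVert xy\rVert\le\lVert x\rVert\lVert y\rVert$ when $N$ is finite). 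Hence multiplication is continuous and $\cA_N$ is a Banach algebra, respectively a locally $m$-convex \Frechet algebra, with unit $\mathbf 1=(1,0,0,\dots)$.

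Next I would identify the group of units. Let $p_0\colon\cA_N\to E^{\otimes 0}=\R$ be the continuous linear projection onto the degree-$0$ component; I claim $\cA_N^\times=p_0^{-1}(\R\setminus\{0\})$. Indeed, if $p_0(x)=\lambda\ne 0$, put $y=\lambda^{-1}x-\mathbf 1$; as $y$ has vanishing degree-$0$ part, $y^{j}$ vanishes in all degrees $<j$, so the Neumann series $\sum_{j\ge 0}(-y)^{j}$ is componentwise a finite sum, defines an element of $\cA_N$, and inverts $\mathbf 1+y$. Thus $\cA_N^\times$ is open (preimage of an open set under the continuous map $p_0$), and $x^{-1}=p_0(x)^{-1}\sum_{j\ge 0}\bigl(\mathbf 1-p_0(x)^{-1}x\bigr)^{j}$ exhibits each homogeneous component of $x^{-1}$ as a polynomial in finitely many components of $x$ divided by a power of $p_0(x)$; this makes inversion not merely continuous but Bastiani-smooth on $\cA_N^\times$, so $\cA_N$ is a continuous inverse algebra.

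Finally, for the Lie group structure and regularity: being a Mackey-complete continuous inverse algebra, $\cA_N$ has unit group $\cA_N^\times$ an open submanifold of $\cA_N$ that is a Lie group with commutator bracket --- the classical Banach-algebra case when $N<\infty$, and the general theory of unit groups of CIAs when $N=\infty$, cf.~\cite{Glo03}. For $C^0$-regularity one must solve, for each $\gamma\in C^0([0,1],\cA_N)$, the right logarithmic equation $\eta(0)=\mathbf 1$, $\eta'(t)=\eta(t)\gamma(t)$, and show $\evol(\gamma)=\eta(1)$ depends smoothly on $\gamma$. For $N<\infty$ this is the product integral of a Banach algebra, which is classical. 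For $N=\infty$ I would use that a curve in $\cA_\infty=\prod_k E^{\otimes k}$ is $C^1$ iff every component is, so the equation decouples into the hierarchy
$$\tfrac{d}{dt}\eta^{(n)}_t=\sum_{k=0}^{n}\eta^{(k)}_t\otimes\gamma^{(n-k)}_t,\qquad \eta^{(n)}_0=\delta_{n0},$$
which is solved recursively in $n$ by variation of constants, using that $\eta^{(0)}_t=\exp\!\int_0^t\gamma^{(0)}_r\,dr$ is invertible in $\R$; each $\eta^{(n)}$ then comes out as an explicit iterated-integral functional of $\gamma$, built from integration and the continuous bilinear multiplication maps, hence $C^1$ and smooth in $\gamma$. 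Reassembling the components gives $\eta\in C^1([0,1],\cA_\infty)$ with values in $\cA_\infty^\times$ (since $p_0(\eta_t)\ne 0$) and smoothness of $\evol$, so $\cA_\infty^\times$ is $C^0$-regular.

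The one point that genuinely requires care is the last: proving smoothness --- not just existence --- of $\evol$ in the \Frechet case $N=\infty$. The product structure of $\cA_\infty$ helps considerably, reducing the question to the finitely many degree-$\le n$ constructions, each a composition of integration with the continuous multilinear multiplication; alternatively, one may simply invoke the known fact that the unit group of a Mackey-complete continuous inverse algebra is $C^0$-regular.
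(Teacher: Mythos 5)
Your proposal is correct, but it follows a more hands-on route than the paper. The paper's proof is essentially a chain of citations: for $N<\infty$ the tensor-norm estimates give a Banach algebra; for $N=\infty$ it observes that multiplication is separately continuous and invokes Waelbroeck to get joint continuity, identifies $\cA_\infty$ as a \emph{densely graded} locally convex algebra in the sense of Bauer--Dahmen--Schmeding, and quotes their Lemma B.8 to get the continuous inverse property; finally it cites Gl\"ockner's theory of unit groups of Mackey-complete CIAs (together with \cite{GaN12}) to obtain that $\cA_N^\times$ is even a \emph{regular} Banach, respectively \Frechet, Lie group. You instead prove everything directly: the submultiplicative seminorms $q_n$ give joint continuity (and local $m$-convexity) without Waelbroeck; the identification $\cA_N^\times=p_0^{-1}(\R\setminus\{0\})$ with the terminating/componentwise-finite Neumann series gives openness of the unit group and an explicit inversion formula whose homogeneous components are polynomials in finitely many components divided by powers of $p_0(x)$, hence inversion is Bastiani-smooth and the CIA property follows (your formula is exactly the content of the quoted Lemma B.8(a), which the paper reuses later for the exponential/logarithm); and for $C^0$-regularity at $N=\infty$ you solve the evolution equation degree by degree, getting $\eta^{(n)}(1)$ as iterated integrals that depend smoothly on $\gamma$ because smoothness into the product $\prod_k E^{\otimes k}$ is checked componentwise. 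This is a legitimate and more elementary argument; what the paper's route buys is stronger conclusions with no extra work (real analyticity of the group operations, regularity in Milnor's sense rather than just $C^0$-regularity) and a framework that is reused elsewhere in the paper, whereas your argument is self-contained and makes the structure of the unit group completely explicit. Two small points to tidy up: the relevant reference for unit groups of CIAs is \cite{Glo02} rather than \cite{Glo03}, and you should say one word on why $p_0(x)=0$ excludes invertibility (namely that $p_0$ is an algebra character), which closes the claimed equality $\cA_N^\times=p_0^{-1}(\R\setminus\{0\})$.
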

We recall the notion of regularity of a Lie group $G$.
Let~$1$ denote the group's identity element and $\Lf(G)$ its Lie algebra. Then $G$ is called \emph{$C^r$-regular}, $r\in \N_0\cup\{\infty\}$, if for each $C^r$-curve $u\colon [0,1]\rightarrow \Lf(G)$ the initial value problem 
$$\dot \gamma(t) = \gamma(t) \cdot u(t) \qquad \gamma(0) = 1 $$
 has a (necessarily unique) $C^{r+1}$-solution
 $\text{Evol} (u)\coloneq\gamma\colon [0,1]\rightarrow G$ and the map
 \begin{displaymath}
  \text{evol} \colon C^r([0,1],\Lf(G))\rightarrow G,\quad u\mapsto \text{Evol}(u)(1)
 \end{displaymath}
 is smooth. A $C^\infty$-regular Lie group $G$ is called \emph{regular}
 \emph{(in the sense of Milnor}). Every Banach Lie group is $C^0$-regular (cf. \cite{Neeb06}). Several
 important results in infinite-dimensional Lie theory are only available for regular Lie groups, cf.\ \cite{KM97}.

\begin{proof}[Proof of \Cref{lemma:BanachAlgebra}]
By construction of the algebra structure we have for elements of degree $k$ and $\ell$ that 
\begin{align}\label{eq:mult}
 E^{\otimes k} \cdot E^{\otimes \ell} \subseteq E^{\otimes (k+\ell)}.
\end{align}
By choice of tensor norms in \cref{tensornorm}, $\cA_N$ is a Banach algebra for $N < \infty$, so in particular a continuous inverse algebra. 
Now $\cA_\infty$ is a \Frechet space with respect to the product topology. The choice of tensor norms shows that multiplication is separately continuous and by \cite[VII, Proposition 1]{Waelbroeck} the multiplication is also jointly continuous. Since $\cA_\infty$ is a countable product of Banach spaces whose multiplication satisfies \eqref{eq:mult}, we conclude that $\cA_\infty$ is a densely graded locally convex algebra in the sense of \cite{BaDaS}.
Due to \cite[Lemma B.8 (b)]{BaDaS} $\cA_\infty$ is a continuous inverse algebra, i.e.\ inversion is continuous and the unit group $\cA^\times$ is an open subset of $\cA_\infty$. Following \cite{Glo02,GaN12}, the unit group $\cA^\times_N$ is a regular Banach (for $N< \infty$) or \Frechet Lie group ($N=\infty$). 
\end{proof}

\begin{rem}
 The unit group $\cA^\times_N$ of $\cA_N$ is even a real analytic Lie group in the sense that the group operations extend analytically to the complexification. \end{rem}

\subsection{Exponential map}
Define the canonical projection $\pi_0^N \colon \cA_N \rightarrow \R = \mathcal{A}_0$ and the closed ideal $\cI_{\cA_N} := \ker \pi_0^N = \prod_{0 < k\leq N} E^{\otimes k}$.
Related to this ideal, we introduce the following maps.
\begin{la}[Exponential and logarithm]\label{la:explog}
The exponential and logarithm series
\begin{align*}
 \exp_N \colon \cI_{\cA_N} \rightarrow 1 + \cI_{\cA_N},& \qquad  X \mapsto \sum_{0 \leq n \leq N} \frac{X^{\otimes n}}{n!}, \\
 \log_N \colon 1+ \cI_{\cA_N} \rightarrow \cI_{\cA_N},& \qquad  1 +Y \mapsto \sum_{0 \leq n \leq N} (-1)^{n+1} \frac{Y^{\otimes n}}{n} ,
\end{align*}
yield mutually inverse real analytic isomorphisms.
\end{la}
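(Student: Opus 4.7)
The plan is to handle well-definedness, the mutual-inverse identity, and real analyticity in that order, with the grading of $\cI_{\cA_N}$ carrying most of the weight.

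First, I would observe that every $X \in \cI_{\cA_N}$ has vanishing degree-zero component, so the $n$-fold power $X^{\otimes n}$ lies in $\prod_{k \geq n} E^{\otimes k}$. Consequently, the $m$-th homogeneous component of $\exp_N(X)$ is the finite sum
$$(\exp_N(X))^{(m)} = \sum_{n=0}^{m} \frac{1}{n!} \sum_{\substack{k_1+\cdots+k_n = m \\ k_i \geq 1}} X^{(k_1)} \otimes \cdots \otimes X^{(k_n)},$$
a polynomial in the finitely many data $X^{(1)}, \ldots, X^{(m)}$, and an analogous formula holds for $\log_N$. For $N < \infty$ this gives a genuinely finite series in the Banach algebra $\cA_N$; for $N = \infty$, convergence in each Banach coordinate gives convergence in the product (Fr\'echet) topology of $\cA_\infty$. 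In particular both $\exp_N$ and $\log_N$ are well-defined continuous maps between $\cI_{\cA_N}$ and $1 + \cI_{\cA_N}$.

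Second, I would establish the two mutual-inverse identities by reducing to the formal power series identities $\exp(\log(1+t)) = 1+t$ and $\log(\exp t) = t$ in $\R[[t]]$. Since all terms appearing in either composition are polynomials in a single commuting element, one specializes through the algebra homomorphism $\R[[t]]/(t^{N+1}) \to \cA_N$ sending $t \mapsto X$; this is well-defined because $X^{\otimes n}$ vanishes in $\cA_N$ for $n > N$ (for $N < \infty$), and because the projection onto $E^{\otimes m}$ of either composition only sees formal coefficients of degree $\leq m$, making the truncation invisible.

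Third, for real analyticity, the case $N < \infty$ is immediate since both maps are polynomials into a Banach algebra. For $N = \infty$ I would appeal to the standard fact in Bastiani calculus that a map into a countable product $\prod_k E^{\otimes k}$ of Banach spaces is real analytic if and only if each coordinate projection is real analytic; the coordinate formula above then expresses that projection as a polynomial in finitely many Banach variables, hence manifestly real analytic. I do not anticipate a genuine obstacle: everything reduces to the grading observation in step one, and the only point worth stating carefully is the coordinate-wise characterization of analyticity on the Fr\'echet space $\cA_\infty$, which is precisely the statement that a power series into a product converges iff it converges in each factor.
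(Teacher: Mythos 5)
Your proof is correct, but it takes a genuinely different route from the paper's. The paper's argument is a short application of Gl\"ockner's holomorphic functional calculus for continuous inverse algebras: it identifies the spectrum of any $x \in \cA_N$ as the single point $\{\pi_0^N(x)\}$, so elements of $\cI_{\cA_N}$ have spectrum $\{0\}$ and elements of $1+\cI_{\cA_N}$ have spectrum $\{1\}$; the functional calculus then immediately furnishes $\exp$ and $\log$ as mutually inverse real analytic maps between the corresponding spectral subsets, with the analytic-inverse relation inherited from the scalar functions $e^z$ and $\log w$. Your proof is more elementary and self-contained: it exploits the grading of $\cI_{\cA_N}$ to see that each homogeneous coordinate of the series is a polynomial in finitely many coordinates of the input (giving well-definedness, and convergence in the product topology for $N=\infty$), transports the formal identities in $\R[[t]]$ along the continuous substitution homomorphism $t\mapsto X$ (which is well-defined precisely because of the grading), and obtains analyticity from the polynomial structure coordinate-wise. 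The trade-off is the usual one: the paper's route is shorter but leans on the CIA functional calculus machinery of \cite{Glo02,BaDaS}, whereas yours avoids that machinery at the price of explicitly carrying the combinatorics of the grading and the substitution-compatibility argument. One small point worth being careful about if you flesh this out is that the coordinate-wise characterization of real analyticity into the Fr\'echet product $\cA_\infty$ should be tied to the notion of analyticity actually used in the paper (complex-analytic extension on the complexification, following Gl\"ockner), where it does hold because complex analyticity into a product is detected coordinate-wise.
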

\begin{proof}

 We follow \cite{Glo02} and define \emph{the spectrum of $x \in \cA_N$} as
 $$\sigma (x) := \C \setminus \{z \in \C \mid z \cdot 1 -x \in \cA_N^\times\},$$
 where $1$ is the unit of $\mathcal{A}_N$.
 For $\Omega \subseteq \C$ open we let $(\cA_N)_\Omega := \{x \in \cA_N \mid \sigma (x) \subseteq \Omega\}$. In view of the holomorphic functional calculus developed in \cite[Section 4]{Glo02} and \cite[Lemma 5.2]{Glo02}, it suffices to prove that $\cI_{\cA_N} \subseteq (\cA_N)_{\{|z|<\log (2)\}}$ and $1+\cI_{\cA_N} \subseteq \{|z-1|<1\}$. 
 However, from \cite[Lemma B.8 (a)]{BaDaS}, the element $z \cdot 1 - x$ is invertible if and only if $z \neq \pi_0^N (x)$. Thus the statement follows from holomorphic functional calculus.
\end{proof}

\begin{rem}\label{rem:BCH}
 Due to \cite[Theorem 5.6]{Glo02} the Lie group exponential of $\cA_N^\times$ is given by the exponential series $$\exp_{\cA_N} \colon \cA_N = \Lf (\cA_N^\times) \rightarrow \cA^\times_N,\quad x \mapsto \sum_{n\in \N_0} \frac{x^{\otimes n}}{n!}.$$
\end{rem}

\subsection{Free nilpotent groups} \label{subsect: Liepoly} Using the exponential map, we are ready to define the subgroups of $\cA_N^\times$ we are interested in. Observe that $\cA_N = \Lf (\cA_N^\times)$ is a Lie algebra with respect to the commutator bracket $ \LB[x,y] := x \otimes y - y \otimes x$. We define inductively the space $\LP_a^n (E)$ of \emph{Lie polynomials} over $E$ of degree $n \in \N$ by $\LP_a^1 (E) := E$ and 
\begin{align*}
\LP_a^{n+1}(E) & := \LP^n_a(E) + \text{span} \{ \LB[x,y] \mid x \in \LP_a^n (E), y \in E\} \subseteq \mathcal{A}_{n+1} ,\\
\LP^\infty_a (E) & := \left\{\sum_{n\in \N_0} P_n \middle| P_n \in E^{\otimes n} \text{ is a Lie polynomial}\right\} .\end{align*}
Elements in the set $\LP^\infty_a (E)$ are called \emph{Lie series}. The set of all Lie polynomials or Lie series is a Lie subalgebra of $(\cA_N , \LB)$, \cite[Chapter 1.2]{Reut93}. Since $\cA_N$ is a topological Lie algebra, we see that also $\LP^N (E) := \overline{\LP_a^N (E)}$ is a closed Lie subalgebra of $(\cA_N , \LB)$. Due to \cite[Theorem 1.4]{Reut93}, we have $\LP^N (E) \subseteq \cI_{\cA_N}.$  
Hence we can apply Lemma \ref{la:explog} and \cite[Corollary 3.3]{Reut93} to see that the set 
$$\GN(E) := \exp_{\cA_N}(\LP^N(E))=\overline{\exp_{\cA_N} (\LP^N(E))},$$
forms a closed subgroup of $\cA_N^\times$.
Closed subgroups of infinite-dimensional Lie groups are in general not again Lie subgroups \cite[Remark IV.3.17]{Neeb06}. So indeed the next proposition is non-trivial. 

\begin{prop} \label{prop:regularity}
The group $\GN(E)$ is a closed submanifold of $\cA_N$ and this structure turns it into a Banach Lie group for $N<\infty$ and into a \Frechet Lie group for $N=\infty$. Moreover, $\GN(E)$ is a $C^0$-regular Lie group and the exponential map $\exp \colon \LP^N(E) \rightarrow \GN(E)$ is a diffeomorphism.
\end{prop}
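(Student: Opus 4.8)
The plan is to realise $\GN(E)$ as a split closed submanifold of $\cA_N^\times$ by using the logarithm of Lemma~\ref{la:explog} as a single global chart, and then to transport the Lie group structure and the $C^0$-regularity from the ambient group.

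\emph{Step 1 ($\LP^N(E)$ is complemented in $\cA_N$).} The ideal $\cI_{\cA_N}=\ker\pi_0^N$ is complemented in $\cA_N$ with complement $\R\cdot 1$. Inside $\cI_{\cA_N}=\prod_{0<k\le N}E^{\otimes k}$, consider the Dynkin operator $D_k\colon E^{\otimes k}\to E^{\otimes k}$, $x_1\otimes\cdots\otimes x_k\mapsto\ad_{x_1}\ad_{x_2}\cdots\ad_{x_{k-1}}(x_k)$. It is continuous, since the commutator bracket is continuous by the first property of the tensor norms in Section~\ref{tensornorm}, and by the Dynkin--Specht--Wever theorem (\cite[Ch.~1]{Reut93}) together with this continuity, $\tfrac1k D_k$ is a continuous projection of $E^{\otimes k}$ onto the closed subspace $\LP^N(E)\cap E^{\otimes k}$. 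Since $\LP_a^N(E)$ is graded, $\LP^N(E)=\overline{\LP_a^N(E)}=\prod_{0<k\le N}\bigl(\LP^N(E)\cap E^{\otimes k}\bigr)$, and assembling the projections $\tfrac1k D_k$ over $k$ gives a continuous projection of $\cA_N$ onto $\LP^N(E)$; its complement is Banach for $N<\infty$ and \Frechet for $N=\infty$.

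\emph{Step 2 (submanifold and Lie group structure).} Here $\cA_N^\times=\{x:\pi_0^N(x)\neq 0\}$ is open in $\cA_N$, and $x\mapsto(\pi_0^N(x),\pi_0^N(x)^{-1}x)$ is a diffeomorphism onto $\R^\times\times(1+\cI_{\cA_N})$; composing its second component with $\log_N$ yields a chart $\psi$ of $\cA_N^\times$ near $1$ with $\psi(\GN(E)\cap U)=\bigl(\{0\}\times\LP^N(E)\bigr)\cap\psi(U)$ on a suitable open $U\ni 1$, which by Step~1 is a split submanifold chart. Left translations $L_g$ with $g\in\GN(E)$ are diffeomorphisms of $\cA_N^\times$ preserving $\GN(E)$, so translating $\psi$ gives split submanifold charts at every point. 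Hence $\GN(E)$ is a closed split submanifold of $\cA_N$ modelled on $\LP^N(E)$ --- Banach for $N<\infty$, \Frechet for $N=\infty$ --- and $\log_N|_{\GN(E)}\colon\GN(E)\to\LP^N(E)$ is a diffeomorphism, so its inverse $\exp=\exp_{\cA_N}|_{\LP^N(E)}$ is a diffeomorphism as well (and equals the Lie group exponential by Remark~\ref{rem:BCH}). Finally, multiplication and inversion of $\cA_N^\times$ restrict to maps $\GN(E)\times\GN(E)\to\cA_N^\times$ and $\GN(E)\to\cA_N^\times$ with image in $\GN(E)$; since $\GN(E)$ is a split submanifold these corestrict to smooth maps, so $\GN(E)$ is a Banach (resp.\ \Frechet) Lie group.

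\emph{Step 3 ($C^0$-regularity).} By Lemma~\ref{lemma:BanachAlgebra}, $\cA_N^\times$ is $C^0$-regular. Given a continuous $u\colon[0,1]\to\LP^N(E)=\Lf(\GN(E))$, let $\gamma:=\Evol^{\cA_N^\times}(u)$ be the $C^1$-solution of $\dot\gamma=\gamma\cdot u$, $\gamma(0)=1$ in $\cA_N^\times$. Applying the algebra homomorphism $\pi_0^N$ to this equation gives $\pi_0^N(\gamma)\equiv 1$, so $\gamma$ is valued in $1+\cI_{\cA_N}$ and $X:=\log_N\circ\gamma\colon[0,1]\to\cI_{\cA_N}$ is a well-defined $C^1$-curve with $X(0)=0$. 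The standard identity for the left logarithmic derivative in exponential coordinates reads $u=\sum_{n\ge 0}\tfrac{(-1)^n}{(n+1)!}\ad_X^n(\dot X)$, a series that is finite in each tensor degree because $X,\dot X\in\cI_{\cA_N}$; inverting it yields $\dot X=\sum_{n\ge 0}c_n\,\ad_X^n(u)$ with universal constants $c_n$ and $c_0=1$. Read degree by degree this is a triangular system solved by successive integration, and since $u$ is $\LP^N(E)$-valued and $\LP^N(E)$ is a closed Lie subalgebra of $\cA_N$, every integration keeps $X(t)$ in $\LP^N(E)$; hence $\gamma(t)=\exp_{\cA_N}(X(t))\in\GN(E)$ for all $t$. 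Being $C^1$ into $\cA_N^\times$ with image in the split submanifold $\GN(E)$, the curve $\gamma$ is $C^1$ into $\GN(E)$ and solves the same equation there, so $\Evol^{\GN(E)}(u)=\gamma$; moreover $\evol^{\GN(E)}$ equals $\evol^{\cA_N^\times}$ precomposed with the continuous linear inclusion $C^0([0,1],\LP^N(E))\hookrightarrow C^0([0,1],\cA_N)$ and corestricted to the split submanifold $\GN(E)$, hence it is smooth. Therefore $\GN(E)$ is $C^0$-regular.

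\emph{Main obstacle.} The decisive point is Step~1 together with the warning that closed subgroups of infinite-dimensional Lie groups need not be Lie subgroups: one must verify that the Dynkin idempotents are genuinely continuous for the axiomatically prescribed tensor norms and that their complements assemble over the --- possibly infinite --- product of tensor degrees, so that the global chart $\log_N$ really displays $\GN(E)$ as a split submanifold. Granting this, the remaining arguments rely on two standard facts that nevertheless must be invoked with care in the \Frechet/Bastiani framework: the corestriction of a $C^r$-map to a split submanifold containing its image is again $C^r$, and the chart-ODE of Step~3 is solvable by its degree-graded triangular structure, which substitutes for the Picard--Lindelöf theorem that is unavailable when $N=\infty$.
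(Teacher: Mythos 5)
Your route is genuinely different from the paper's (which identifies $\Lf^{(N)}=\LP^N(E)$ and invokes Neeb's theorem on closed subgroups of locally exponential Lie groups, and for $N=\infty$ gets $C^0$-regularity from Glöckner's projective-limit criteria rather than by solving the evolution equation degree by degree), but as written it has a genuine gap exactly at the point you yourself flag as decisive: Step~1. The Dynkin map $D_k$ is the \emph{linearization} of the bounded $k$-linear map $(x_1,\dots,x_k)\mapsto \ad_{x_1}\cdots\ad_{x_{k-1}}(x_k)$; continuity of the bracket on $\cA_N$ (which is what property~1 of the tensor norms gives you) only shows that this multilinear map is bounded, and boundedness of a multilinear map yields a bounded linear map on the completed tensor power only for the \emph{projective} norm. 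For the axiomatic norms of Section~\ref{tensornorm}, $D_k$ is a signed sum of permutation operators (already for $k=2$ it is $\id-\tau$ with $\tau$ the flip), and property~2 only controls the norms of \emph{elementary} tensors; it does not make the permutation operators bounded on $E^{\otimes k}$. So the continuity of the idempotents $\tfrac1k D_k$, hence the complementedness of $\LP^N(E)$ in $\cA_N$ and the split-submanifold chart on which Steps~2 and~3 (corestriction arguments) rest, is not established in the generality in which the proposition is stated. The paper's proof deliberately avoids any complementation: Neeb's Theorem IV.3.3 only needs $\LP^N(E)$ to be a closed subalgebra together with the global $\exp/\log$ correspondence of Lemma~\ref{la:explog}, and the identity $\Lf^{(N)}=\LP^N(E)$.

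Two further remarks. First, much of your argument can be repaired without splitness: in Bastiani calculus a map that is smooth into $\cA_N$ and takes values in a \emph{closed} subspace is smooth into that subspace, so the group operations read in the global $\log$-chart (via the graded BCH formula, which stays in the closed subalgebra $\LP^N(E)$) and your Step~3 curve $X=\log_N\circ\gamma$ can be handled this way; your degree-by-degree triangular argument that $X(t)\in\LP^N(E)$ is correct (note you should also observe that the graded components of $u(t)\in\LP^N(E)$ again lie in $\LP^N(E)$, which follows from continuity of the degree projections). But then what you obtain is a Lie group structure with $\exp$ a global diffeomorphism and an injective immersion into $\cA_N$, not the split closed submanifold you claim; the stronger submanifold statement is exactly what your unproved Step~1 was meant to deliver. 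Second, for $N=\infty$ your direct regularity argument is an attractive alternative to the paper's projective-limit argument, but it too currently leans on the split-submanifold corestriction and so inherits the Step~1 gap.
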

Observe that for $N< \infty$, the group $\GN(E)$ is a nilpotent group of step $N$ generated by $E$.
\begin{proof}
The group $\GN(E)$ is a closed subgroup of the locally exponential Lie group~$\cA_N^\times$. Due to \Cref{rem:BCH}, the Lie group exponential of this group is $\exp_{\cA_N}$. Define
$$\Lf^{(N)} := \{x \in \cI_{\cA_N} \subseteq \Lf (\cA_N^\times) \mid \exp_{\cA_N} (\R x) \subseteq \GN(E)\}.$$
Due to construction of the closed Lie subalgebra $\LP^N (E)$, we have $\LP^N(E) \subseteq \Lf^{(N)}$.
Conversely as $\LP^N (E) \subseteq \cI_{\cA_N}$ and $\GN(E) \subseteq 1 + \cI_{\cA_N}$, we deduce from \Cref{la:explog} that also $\Lf^{(N)} \subseteq \LP^N (E)$ holds, hence the two sets coincide. It follows that $\GN(E)$ is a locally exponential Lie subgroup of $\cA^\times_N$ by \cite[Theorem IV.3.3]{Neeb06}. 

Since $\LP^N(E) \subseteq \mathcal{I}_{\mathcal{A}_N}$, $\GN(E) \subseteq 1 +\mathcal{I}_{\mathcal{A}_N}$ and the exponential $\exp_{\cA_N}$ is a diffeomorphism between those sets (Lemma \ref{la:explog}), the Lie group exponential induces a diffeomorphism between Lie algebra and Lie group as $\exp =  \exp_{\cA_N}|_{\LP^N(E)}^{\GN(E)}$ due to \cite[Theorem IV.3.3]{Neeb06}.

The Banach Lie groups $\GN(E)$, $N< \infty$ are $C^0$-regular, cf.\ also Remark \ref{re:L1regularity} below, and we see that the canonical projection mappings $\pi^M_N \colon G^{M}(E) \rightarrow \GN(E)$, $N,M \in \N_0 \cup \{\infty\}$, $M \geq N$ are smooth group homomorphisms. Hence we obtain a projective system of Lie groups $(\GN(E) , \pi^{N}_{N-1})_{N \in \N}$ whose Lie algebras also form a projective system $(\LP^N(E),\Lf (\pi^{N}_{N-1}))_{N \in \N}$ of Lie algebras. As sets $$\LP^\infty (E) = \underleftarrow{\lim} \, \LP^N (E), \qquad G^{\infty}(E) = \underleftarrow{\lim} \GN(E),$$ and the limit maps $\pi^\infty_N$ are smooth group homomorphisms. Then we deduce from \cite[Lemma 7.6]{Glo15} that $G^{\infty}(E)$ admits a projective limit chart, hence \cite[Proposition 7.14]{Glo15} shows that $G^{\infty}(E)$ is $C^0$-regular.
\end{proof}

\begin{rem} \label{re:L1regularity}
 The regularity of the Lie groups $\GN(E)$ can be strengthened by weakening the requirements on the curves in the Lie algebra. This results in a notion of $L^p$-regularity \cite{Glo15} for infinite-dimensional Lie groups. One can show that Banach Lie groups such as $\GN(E)$ are $L^1$-regular. Furthermore, as in the proof of \Cref{prop:regularity}, one sees that the limit $G^{\infty}(E)$ is $L^1$-regular. Note that $L^1$-regularity implies all other known types of measurable regularity for Lie groups.
\end{rem}

\begin{example}[Step~$2$] \label{ex:Step2}
For the remainder of the paper, we will mostly focus on the special case of $N =2$. In this case $\LP^2(E)$ is the closure in $\cA_2$ of sums of elements $X$, $Y \wedge Z = Y \otimes Z - Z \otimes Y$ with $X,Y,Z \in E$ and Lie brackets
$$[X + \mathbb{X}, Y + \mathbb{Y}] = X\wedge Y, \qquad X, Y \in E, \mathbb{X}, \mathbb{Y} \in \LP^2(E) \cap E^{\otimes 2}.$$
\end{example}

\section{Applications to infinite dimensional rough paths}\label{section: App:RP}
\subsection{Rough paths and geometric rough paths in Banach space}
Let us first recall the notion of a Banach-space valued rough path, see e.g. \cite{CDLL16}. The definition of a rough path involves higher level components with values in a completed tensor product.

\begin{definition}
Fix $\alpha \in (\frac13, \frac12)$ and a tensor product completion $E \otimes E$ by a choice of a tensornorm $\| \cdot\|_\otimes$ satisfying the assumptions from \Cref{tensornorm}. An $(E,\otimes)$-valued \emph{$\alpha$-rough path} consists of a pair $(x,x^{(2)})$
$$
x\colon  [0,T] \rightarrow E, \qquad x^{(2)} \colon [0,T]^2 \rightarrow E^{\otimes 2} = E\otimes E 
$$
where $x$ is an $\alpha$-H{\"o}lder continuous path and $x^{(2)}$ is ``twice H{\"o}lder continuous'', i.e.
\begin{equation} \label{Holder continuity}
\| x_t - x_s\| \lesssim |t-s|^{\alpha} , \qquad  \| x^{(2)}_{st}\|_2 \lesssim |t-s|^{2 \alpha}.
\end{equation}
In addition, we require
\begin{equation} \label{Chens relation}
x^{(2)}_{st} - x^{(2)}_{su} - x^{(2)}_{ut} = (x_u - x_s) \otimes (x_t - x_u)
\end{equation}
usually called Chen's relation. The set of rough paths equipped with the metric induced by \eqref{Holder continuity} is denoted $\mathscr{C}^{\alpha}([0,T],E)$.
\end{definition}

To be more precise about this distance, we write $\mathbf{x}_{st} = 1+ x_t - x_s + x^{(2)}_{st}$ in~$\mathcal{A}_2$, the two step-truncated tensor algebra over $E$. Chen relation \eqref{Chens relation} can then be rewritten as $\bx_{st} = \bx_{su} \bx_{ut}$. Introduce a metric $d$ on $1 + \mathcal{I}_N  =\{ \bx = 1 + x + x^{(2)} \, : \, x \in E, x^{(2)} \in E \otimes E\}$, by
\begin{align*}
| \bx | & = \max\{ \| x\|, \| x\|^{1/2}_\otimes \}, \\
d(\bx, \mathbf{y}) & = | \bx^{-1} \cdot  \mathbf{y}|  = | (1+ x + x^{(2)} )^{-1} \cdot (1+ y + y^{(2)})|.
\end{align*}
We then define the distance between two $\alpha$-rough paths $(s,t) \mapsto \bx_{st}, \mathbf{y}_{st}$ on $[0,T]^2$ as
\begin{equation} \label{dalpha}
d_\alpha(\bx, \mathbf{y}) = \sup_{0 \leq s < t \leq T} \frac{d(\bx_{st}, \mathbf{y}_{st})}{|t-s|^\alpha}.\end{equation}
Rephrasing these properties, we can define $\bx_t := \bx_{0t} = 1 + x_t + x^{(2)}_{0t} = 1 + x_t + x^{(2)}_{t}$ and regard $t \mapsto \bx_t$ as a $\alpha$-H{\"o}lder continuous path with values in $\mathcal{A}_2$. The relations \eqref{Chens relation} tells us that $\bx_{st} = \bx_s^{-1} \bx_t$
and we have the identification  $\mathscr{C}^{\alpha}([0,T],E) \simeq C^{\alpha}([0,T], 1 + \mathcal{I}_N)$.

If $x_t$ is a smooth path in $E$, then we can lift it to a rough path $\bx_t = 1 + x_t + x_{t}^{(2)}$, where $x_{st}^{(2)} = \int_s^t (x_r - x_s) \otimes dx_r$. Using integration by parts,
\begin{equation} \label{weak geometric}
\int_s^t (x_r - x_s) \otimes dx_r + \int_s^t  dx_r \otimes  (x_r - x_s)  = (x_t - x_s)  \otimes (x_t - x_s),
\end{equation} 
that is, the symmetric part of $x_{st}^{(2)}$ is $(x_t - x_s)  \otimes (x_t - x_s)$. This algebraic condition is equivalent to $\bx_t$ taking values in $G^2(E)$. We note that $\log_2(\bx_{st}) = x_t - x_s + \frac{1}{2} \int_s^t (x_r - x_s)\wedge dx_r$.

\begin{definition}[Weakly geometric and geometric rough paths]
We say that $\alpha$-rough path $\mathbf{x}_t$ is \emph{weakly geometric} if it takes values in $G^2(E)$.
These can again can be given the structure of a metric space $\mathscr{C}^\alpha_{wg}([0,T],E)$ with the metric $d_\alpha$ as in \eqref{dalpha} and can be identified with $C^\alpha([0,T],G^{2}(E))$.

The space of \emph{geometric rough paths} is defined as the closure in the rough path topology of the set canonical lift of smooth paths and is denoted $\mathscr{C}^{\alpha}_g([0,T],E)$.
\end{definition}

Since \eqref{weak geometric} is stable under limits, we get that the set of geometric rough paths can be regarded as a subspace of $C^{\alpha}([0,T],G^{2}(E))$. The reversed question, namely if any $\bx \in C^{\alpha}([0,T],G^{2}(E))$ can be approximated by a sequence of smooth paths is answered positively modulo some tuning of the H\"{o}lder parameter $\alpha$ given the following conditions.

We recall the definition of \emph{the Carnot-Caratheodory metric}, which we will often abbreviate as the CC-metric. We define this metric $\rho$ on $G^2(E)$ by $\rho(\mathbf{y}, \mathbf{z}) = \rho(1, \mathbf{y}^{-1} \cdot \mathbf{z})$ and
$$\rho(1, \mathbf{y}) = \inf \left\{ \int_0^T \| \dot x_t\| \, dt \, : \, \begin{subarray}{c} x \in C([0,T], E), x_0 = 0, \, \text{$x_t$ has bounded variation} \\ \\ \mathbf{y} = S^2(x)_t := 1 + x_T + \int_0^T x_t \otimes dx_t \end{subarray} \right\}. $$

\begin{theorem} \label{thm:geometric rough paths}
Write $$M_{cc}  = \{ \mathbf{z} \in G^2(E) \, : \, \rho(1, \mathbf{z}) < \infty\},$$
and $C([0,T], M_{cc})$ for the space of continuous curves in $M_{cc}$ with respect to $\rho$.

Let $\alpha \in (\frac13, \frac12)$ be given and let $\beta \in (\frac13, \alpha)$ be arbitrary. Assume that the following conditions are satisfied.
\begin{enumerate}[\rm (I)]
\item For some $C >0$ and any $\mathbf{z} \in G^2(E)$, we have $d(1, \mathbf{z}) \leq C\rho(1, \mathbf{z})$.
\item The metric space $(M_{cc}, \rho)$ is a complete, geodesic space.
\item The set
$$C^\alpha([0,T], G^2(E)) \cap C([0,T], M_{cc}),$$
is dense in $C^\alpha([0,T], G^2(E))$ relative to the metric $d_\beta$. 
\end{enumerate}
Then for any $\bx \in C^{\alpha}([0,T],G^{(2)}(E))$ there exists a sequence of bounded variation paths $x^n \colon [0,T] \rightarrow E$ such that
$$\bx^n = S^2(x^n) \rightarrow \bx \text{ in $\mathscr{C}^{\beta}([0,T], E)$.}$$
In particular, we have the inclusions 
$$
\mathscr{C}_g^{\alpha}([0,T], E) \subset C^{\alpha}([0,T],G^{2}(E))  \subset \mathscr{C}_g^{\beta}([0,T], E)  .
$$
\end{theorem}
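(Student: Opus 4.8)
The plan is to build the approximating bounded-variation paths "by hand" from a given $\bx \in C^\alpha([0,T],G^2(E))$, using assumption (III) to first replace $\bx$ by a nearby curve that actually lies in $M_{cc}$ (and is still $\alpha$-Hölder), assumption (II) to connect successive points of a time-partition by CC-geodesics, and assumption (I) together with the geodesic property to control the Hölder norm of the resulting concatenated path. So the proof splits into two parts: the main claim (production of $x^n$ with $S^2(x^n)\to\bx$ in $\mathscr C^\beta$) and the corollary (the two inclusions), the latter being essentially a restatement once the main claim and \eqref{weak geometric} are in hand.

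First I would reduce to the case $\bx \in C^\alpha([0,T],G^2(E))\cap C([0,T],M_{cc})$. By (III) this set is $d_\beta$-dense in $C^\alpha([0,T],G^2(E))$, and since each element of it can itself be approximated in $\mathscr C^\beta$ by signatures of bounded variation paths (this is what I am proving in the reduced case), a diagonal argument gives the general statement; so for the rest I assume $\bx$ already has finite CC-distance along the curve. Next, fix a dyadic partition $\pi_n = \{0 = t_0 < t_1 < \cdots < t_{2^n} = T\}$ of mesh $|\pi_n| = T 2^{-n}$. On each subinterval $[t_{i-1},t_i]$, use completeness and geodesity from (II) to choose a CC-geodesic in $M_{cc}$ from $\mathbf x_{t_{i-1}}$ to $\mathbf x_{t_i}$; concretely this geodesic is realized (up to arbitrarily small error, by definition of $\rho$) by a bounded-variation path segment in $E$ whose length is $\rho(\mathbf x_{t_{i-1}},\mathbf x_{t_i}) = \rho(1,\mathbf x_{st})$. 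Concatenating these segments over $i = 1,\dots,2^n$ and reparametrizing gives a bounded-variation path $x^n\colon[0,T]\to E$ with $S^2(x^n)_{t_{i-1}t_i} = \mathbf x_{t_{i-1}t_i}$ at partition points. The key estimate is then on the increments of $\bx^n$ between \emph{arbitrary} $s<t$: using the geodesic structure one gets $\rho(1,\bx^n_{st}) \lesssim \rho(1,\bx_{uv})$ for the neighbouring partition points, hence by (I) and the $\alpha$-Hölder bound $\rho(1,\bx_{uv}) \lesssim |u-v|^\alpha \lesssim$ (mesh plus $|t-s|$)$^\alpha$; combining with interpolation one shows $d(\bx^n_{st},\bx_{st}) \lesssim |\pi_n|^{\alpha-\beta} |t-s|^\beta$, which goes to $0$ uniformly, giving $d_\beta(\bx^n,\bx)\to 0$, i.e.\ convergence in $\mathscr C^\beta$. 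One must check $\bx^n$ is genuinely a rough path, i.e.\ $S^2(x^n)$ satisfies Chen and the Hölder bounds with the right constants — this is where (I) and the homogeneity $|\cdot|$ vs.\ $\rho$ do the work.

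For the two inclusions: $\mathscr C_g^\alpha([0,T],E)\subset C^\alpha([0,T],G^2(E))$ is immediate since the canonical lift of a smooth path lies in $G^2(E)$ by \eqref{weak geometric}, $G^2(E)$ is closed in $\mathcal A_2$ by \Cref{prop:regularity}, and the $d_\alpha$-limit of $G^2(E)$-valued Hölder paths is again $G^2(E)$-valued and $\alpha$-Hölder. The inclusion $C^\alpha([0,T],G^2(E))\subset \mathscr C_g^\beta([0,T],E)$ is exactly the main claim: every such $\bx$ is a $\mathscr C^\beta$-limit of signatures $S^2(x^n)$ of bounded variation paths, and (after a routine smoothing of each bounded-variation $x^n$ by mollification, which perturbs $S^2(x^n)$ by an arbitrarily small amount in $\mathscr C^\beta$) a $\mathscr C^\beta$-limit of canonical lifts of smooth paths, hence geometric.

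The main obstacle I expect is the Hölder estimate on $\bx^n$ over non-partition times $s<t$ straddling several subintervals: one needs that concatenating CC-geodesics does not blow up the homogeneous norm of intermediate increments, i.e.\ a sub-additivity/quasi-geodesic control $|\bx^n_{st}| = \rho(1,\bx^n_{st})^{?} \lesssim \sum \rho(1,\mathbf x_{t_{i-1}t_i})$ and then the reverse comparison via (I); getting the constant in $d_\beta(\bx^n,\bx)\lesssim |\pi_n|^{\alpha-\beta}$ uniform in $n$ requires care. The other delicate point is the diagonal argument combining (III) with the reduced case while keeping all constants under control so that the final convergence is genuinely in $\mathscr C^\beta$ and not merely in a weaker topology.
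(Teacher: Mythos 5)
Your proposal follows essentially the same route as the paper's proof: reduce via (III) and a diagonal argument to curves in $C^\alpha([0,T],G^2(E))\cap C([0,T],M_{cc})$, approximate these by geodesic interpolations along partitions, and upgrade uniform convergence together with uniform $\alpha$-H\"older bounds to $d_\beta$-convergence by the interpolation $\min\{a,b\}\le a^\theta b^{1-\theta}$ with $\alpha(1-\theta)=\beta$. The only structural difference is that the paper outsources the interpolation construction and its uniform H\"older bound to \cite[Lemma 5.21]{FV10}, whereas you build it by hand; the inclusions at the end are handled identically.

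There is, however, one direction-of-inequality problem in your key estimate. You claim $\rho(1,\bx_{uv})\lesssim |u-v|^\alpha$ ``by (I) and the $\alpha$-H\"older bound,'' but (I) reads $d(1,\mathbf{z})\le C\rho(1,\mathbf{z})$, i.e.\ it bounds $d$ by $\rho$ and not conversely; indeed the paper stresses that $\rho$ and $d$ are \emph{not} equivalent in infinite dimensions, so $\rho(1,\bx_{uv})$ cannot be controlled by $d(1,\bx_{uv})$. Since your interpolant on $[t_{i-1},t_i]$ is a constant-speed $\rho$-geodesic of length $\rho(\bx_{t_{i-1}},\bx_{t_i})$, its $d$-oscillation inside that interval is only controlled (via (I)) by $\rho(\bx_{t_{i-1}},\bx_{t_i})$, so without $\rho$-H\"older control of $\bx$ at partition points the uniform bound $d(1,\bx^n_{st})\le C|t-s|^\alpha$ — and hence the interpolation step — would fail. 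The fix is to note that the approximants supplied by (III) can be taken $\alpha$-H\"older with respect to $\rho$ itself (in the Hilbert-space verification of (III) they take values in finite-dimensional subgroups, where $\rho$ and $d$ are equivalent), which is exactly the setting in which the geodesic-interpolation lemma applies with a single metric. With that adjustment your argument goes through and coincides with the paper's.
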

To explain condition (II) in more details, recall that if $(M, \rho)$ is a metric space, then a curve $\gamma\colon [0,T] \to M$ is said to have \emph{constant speed} if $\Length(\gamma|_{[s,t]}) = c |t- s|$ for any $0 \leq s \leq t \leq T$ and some $c \geq 0$. A constant speed curve is a \emph{geodesic} if $\Length(\gamma|_{[s,t]}) = \rho(\gamma(s), \gamma(t)) = |t - s| \rho(\gamma(0), \gamma(T))$. The metric space $(M, \rho)$ is called \emph{geodesic} if any pair of points can be connected by a geodesic.

If $E$ is finite dimension, the assumptions (I), (II) and (III) hold as $\rho$ and $d$ are then equivalent and we have access to the Hopf-Rinow theorem, see e.g. \cite{FaV06}. If $E$ is a general Hilbert space, the Hopf-Rinow theorem is no longer available \cite{MR540948}. We will also show that the metrics $\rho$ and $d$ will not be equivalent in the infinite dimensional case, yet assumptions (I), (II) and (III) will be satisfied, giving us the result in Theorem~\ref{th:mainHilbert}.
We will prove this statement in Section~\ref{sec:hilbert spaces}, finishing the proof in Section~\ref{sec:ProofHilbert}.

\begin{proof}[Proof of Theorem~\ref{thm:geometric rough paths}]
We first consider the case when $\bx \in C^\alpha([0,T], G^2(E)) \} \cap C([0,T], M_{cc})$.
As $(M_{cc}, \rho)$ is a geodesic space, \cite[Lemma 5.21]{FV10} implies that there exists a sequence of truncated signatures $\bx^{n} = S^2(x^n) \colon [0,T] \rightarrow M_{cc}$ of bounded variation paths $x^n$ such that 
$$
\sup_{t \in [0,T]} \rho( \bx_t, \bx_t^{n}) \rightarrow 0, \qquad \text{for } n \rightarrow \infty,
$$
and we have the uniform bound $\sup_n d( 1,\bx_{st}^n) \leq C|t-s|^{\alpha}$.
From (I), we conclude that $\bx^n$ converges to $\bx$ in $C([0,T],G^{(2)}(E))$. To show the stronger convergence in  $C^{\beta}([0,T],G^{2}(E))$ we perform a classical interpolation argument. Since $d$ is left invariant we see that 
\begin{align*}
d(\bx_{st}^n,\bx_{st}) &  \leq  d( (\bx_s^n)^{-1} \bx_t^n, (\bx_s)^{-1} \bx_t^n)  + d( (\bx_s)^{-1} \bx_t^n, (\bx_s)^{-1} \bx_t)  \\
& \leq 2 \sup_{t \in [0,T]} d(\bx_t^n,\bx_t) \leq 2C \sup_{t \in [0,T]} \rho(\bx_t^n,\bx_t),
\end{align*}
so that there exists a sequence of real numbers $\ve_n \rightarrow 0$ with
$$
d(\bx_{st}^n,\bx_{st}) \leq \ve_n.
$$
From the construction of $\bx^n$ we have $d(1,\bx_{st}^n), d(1,\bx_{st}) \leq C|t-s|^{\alpha}$. Using the interpolation $\min\{a,b\} \leq a^{ \theta}b^{ 1- \theta}$ for every $a,b \geq 0$ and $\theta \in [0,1]$ we have
$$
d(\bx_{st}^n,\bx_{st}) \leq \ve_n \wedge  C|t-s|^{\alpha} \leq \ve^{\theta}_n C^{1 - \theta} |t-s|^{\alpha(1- \theta)}
$$
and by choosing $\theta$ such that $\alpha(1 - \theta) = \beta$ we get convergence
$$
d_\beta(\bx^n, \bx ) =\sup_{s,t \in [0,T]} \frac{ d(\bx_{st}^n, \bx_{st})}{|t-s|^{\beta}} \leq \ve^{\theta}_n C^{1 - \theta} \rightarrow 0, \qquad n \rightarrow \infty.
$$

Finally, from the density of $C^\alpha([0,T], G^2(E)) \cap C([0,T], M_{cc})$ by (III) it follows that if $\bx^m \in C^\alpha([0,T], G^2(E)) \cap C([0,T], M_{cc})$ is a sequence converging to an arbitrary $\bx \in C^\alpha([0,T], G^2(E))$ with respect to $d_\beta$, and $ \bx^{n,m}$ is a sequence of truncated signatures of bounded variation curves converging to $\bx^m$, then $\bx^{m,m}$ converge to~$\bx$. This completes the proof.
\end{proof}

\subsection{Wong-Zakai for stochastic flows} \label{sec:WongZakai}
As an application of Theorem~\ref{thm:geometric rough paths} and Therorem~\ref{th:mainHilbert} we prove a Wong-Zakai type result for martingales with values in a Banach space of sufficiently smooth functions, as systematically explored in \cite{kunita1997stochastic}.
Let $(f_k)_{k=0}^K$ be a collection of  time-dependent vector fields $f_k : [0,T] \times \R^d \rightarrow \R^d$ of class $C^p_b(\R^d,\R^d)$ in the $x$-variable for some $p$ to be determined later, and let $(\omega_t)_{t \in [0,T]}$ be a $K$-dimensional Brownian motion on some filtered probability space $(\Omega, \mathcal{F},\mathbb{P})$. The study of the Stratonovich equation (for notational convenience we write $\omega_t^0 = t$)
\begin{equation} \label{Stratonovich equation}
dy_t = \sum_{k=0}^K f_k(t,y_t) \circ d \omega_t^k 
\end{equation}
is by now classical. 
The book \cite{kunita1997stochastic} stresses the importance of considering the $C^p_b(\R^d,\R^d)$-valued semi-martingale 
\begin{equation} \label{martingale}
m_t(\xi) :=  \sum_{k=0}^K \int_0^t f_k(r,\xi)  d \omega_r^k 
\end{equation}
which allows for a one-to-one characterization of stochastic flows (see \cite{kunita1997stochastic} for precise statement and result). Equation \eqref{Stratonovich equation} is then understood as $dy_t = m_{\circ dt}(y_t)$.

Consider now the tensor product on $C^p_b(\R^d, \R^d)$,
$$(f \otimes g) (\xi,\zeta) := f(\xi) g(\zeta)^T,$$
which allows us to identify $C^p_b(\R^d, \R^d)^{\otimes 2}$ with a subspace of $C^p_b(\R^d \times \R^d, \R^{d \times d})$. Let us define the iterated integral
\begin{align} \label{Stratonovich iteration}
m^{(2)}_{st}(\xi, \zeta) & := \int_s^t (m_r - m_s ) \otimes \circ dm_r(\xi,\zeta) \\ \nonumber
&  := \sum_{k,l=0}^K \int_s^t \int_s^r f_l(v,\xi) f_k(r,\zeta)^T d \omega_v^{l} \circ d\omega_r^k,
\end{align}
as a $C^p_b(\R^d \times \R^d, \R^{d \times d})$-valued random field. Checking the symmetry condition then boils down to checking \eqref{weak geometric} for this tensor product. We have, for $\mu,\nu \in \{1, \dots, d\}$
\begin{align}
&m_{st}^{(2),\mu,\nu}(\xi, \zeta)   + m_{st}^{(2),\nu,\mu}(\zeta, \xi) \notag \\ 
=& \int_s^t (m_r^{\mu}(\xi) - m_s^{\mu}(\xi) ) \circ dm_r^{\nu}(\zeta) + \int_s^t (m_r^{\nu}(\zeta) - m_s^{\nu}(\zeta) ) \circ dm_r^{\mu}(\xi) \notag \\  =&  (m_t^{\mu}(\xi) - m_s^{\mu}(\xi) )  (m_t^{\nu}(\zeta) - m_s^{\nu}(\zeta) ) \label{weakly geometric}
\end{align}
by the well-known integration by parts formula for the Stratonovich integral. We note that the particular decomposition of \eqref{martingale} and \eqref{Stratonovich iteration} in terms of the vector fields $f$ and $\omega$ are not important for this property; only the choice of Stratonovich integration in the definition of $m^{(2)}$ plays a role.

The thread of \cite{kunita1997stochastic} was picked up in the rough path setting in \cite{BaR19} where the authors introduce so-called ``rough drivers'', which are vector field  analogues of rough paths. It was noted in \cite{CaN19} that these vector fields can be canonically defined from infinite-dimensional, i.e. $C^p_b(\R^d,\R^d)$, valued rough paths. In fact, the set of $C^p$-vector fields $\mathfrak{X}^p(\R^d)$ is canonically identified with $C^p_b(\R^d, \R^d)$ via
$$
\begin{array}{ccc}
C^p_b(\R^d,\R^d) & \rightarrow &  \mathfrak{X}^p(\R^d) \\
f & \mapsto & f \cdot \nabla = \sum_{\mu} f^{\mu}  \frac{\partial}{\partial \xi^{\mu}} .\\
\end{array}
$$
Moreover, define by linearity on the algebraic tensor
$$
\begin{array}{ccc}
C^p_b(\R^d, \R^d)^{\otimes_a 2} & \rightarrow &  \mathfrak{X}^p(\R^d) \\
f \otimes g & \mapsto & (f \cdot \nabla (g \cdot \nabla))  = \sum_{\mu, \nu} f^{\mu}  \frac{\partial g^{\nu} }{\partial \xi^{\mu}}  \frac{\partial}{\partial \xi^{\nu}} \\
\end{array}
$$
and denote by $\nabla_2^{\otimes}$ the extension to  $C^p_b(\R^d \times \R^d,\R^{d\times d})$. Moreover, for a matrix $a$ we let $a \nabla^2 := \sum_{\mu,\nu} a^{\mu,\nu} \frac{\partial }{\partial \xi^{\mu}}  \frac{\partial}{\partial \xi^{\nu}}$.
Then, given a rough path $\bx \in \mathscr{C}^{\alpha}([0,T], C^{p}_b(\R^d,\R^d))$, if we let
\begin{equation} \label{rough path to rough driver}
X_{st}(\xi) := x_{st}(\xi) \cdot \nabla  , \qquad  \mathbb{X}_{st}(\xi) := \nabla_2^{\otimes} x_{st}^{(2)}(\xi,\xi) + x_{st}^{(2)}(\xi,\xi) \nabla^2,
\end{equation}
then $\mathbf{X} := (X, \mathbb{X})$ is a weakly geometric rough driver in the sense of \cite{BaR19}. Concretely, we will assume $p \geq 3$ to be an integer for simplicity. This could in principle be relaxed at the expense of introducing vector fields which are Hölder continuous in space, but we stick to the simpler case which is also in line with the regularity assumptions in \cite{FaH14}.

In \cite{BaR19} the authors prove Wong-Zakai approximations of $dy_t = m_{\circ dt}(y_t)$ by using linear interpolation of the Banach-space martingale $m$, showing that the corresponding iterated integral converges to $m^{(2)}$ in the appropriate sense and using continuity of the It\^{o}-Lyons map, see \cite{BaR19} for details. The proposition below is proved in a similar way, except the martingale structure is replaced by Theorem \ref{th:mainHilbert} and the continuity of the mapping $\bx \mapsto \mathbf{X}$. Notice that we use the Sobolev embedding to put ourselves in a Hilbert-space setting.

\begin{theorem}
Let $\bx \in \mathscr{C}_{wg}^{\alpha}([0,T], H^{k}(\R^d,\R^d))$ for $k > \frac{d}{2} + p + 1$ for some $p \geq 3$ and suppose $y$ solves $dy_t = \mathbf{X}_{dt} (y_t)$ where $\mathbf{X}_t = (X_t, \mathbb{X}_t)$ is the rough driver built from $\bx$. Then there exists a sequence of functions $x^n : [0,T] \times \R^d \rightarrow \R^d$ of bounded variation of $t$ such that the solution $y^n$ of
$$
\dot{y}^n_t = x^n_t(y_t^n)
$$
converges to $y$ in $C^{\beta}([0,T], C(\R^d,\R^d))$ for any $\beta \in (\frac13, \alpha)$. 
\end{theorem}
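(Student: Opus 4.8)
The plan is to reproduce the Wong--Zakai strategy for rough flows from \cite{BaR19}, replacing its martingale/linear-interpolation input by the purely analytic approximation delivered by Theorem~\ref{th:mainHilbert}, and then combining the continuity of the assignment $\bx \mapsto \mathbf{X}$ with the continuity of the rough-driver solution map. Concretely I would proceed in three steps, the first of which I expect to carry the real analytic weight.

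First I would set up the Hilbert-space and regularity framework. Since $k > \frac d2 + p + 1$, the Sobolev embedding gives a continuous inclusion $H^k(\R^d,\R^d) \hookrightarrow C^{p+1}_b(\R^d,\R^d)$, and, after identifying the Schatten $p=2$ (Hilbert--Schmidt) tensor product $H^k \otimes H^k$ with the Sobolev space of dominating mixed smoothness on $\R^d\times\R^d$, also a continuous inclusion $H^k \otimes H^k \hookrightarrow C^{p+1}_b(\R^d\times\R^d,\R^{d\times d})$, under which the pointwise tensor product $(f\otimes g)(\xi,\zeta)=f(\xi)g(\zeta)^T$ of \eqref{Stratonovich iteration} is exactly the restriction to elementary tensors. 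Consequently the rough-driver construction \eqref{rough path to rough driver} is well defined on all of $\mathscr{C}^\alpha([0,T],H^k)$ and produces a driver $\mathbf{X}=(X,\mathbb{X})$ whose vector-field and second-order coefficients lie in $C^p_b$ (one derivative is absorbed by $\nabla_2^{\otimes}$, none by the $\,\cdot\,\nabla^2$ term), which is enough to run the theory of \cite{BaR19} and in particular to make the hypothesis that $y$ solves $dy_t = \mathbf{X}_{dt}(y_t)$ meaningful. Because $H^k$ is a Hilbert space, Theorem~\ref{th:mainHilbert} then applies and yields $\mathscr{C}^\alpha_{wg}([0,T],H^k) \subset \mathscr{C}^\beta_g([0,T],H^k)$ for every $\beta\in(\tfrac13,\alpha)$, so that there is a sequence of bounded-variation curves $x^n\colon[0,T]\to H^k$ with $\bx^n := S^2(x^n)\to\bx$ in $\mathscr{C}^\beta([0,T],H^k)$. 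Writing $x^n_t := x^n(t,\cdot)\in C^{p+1}_b(\R^d,\R^d)$, this is Lipschitz in space with bounded-variation time dependence, so the Carath\'eodory ODE $\dot y^n_t = x^n_t(y^n_t)$ has a unique flow solution $y^n$.

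Next I would establish continuity of $\bx\mapsto\mathbf{X}$ and the consistency of rough flows with ODE flows for the approximants. Letting $\mathbf{X}^n=(X^n,\mathbb{X}^n)$ be built from $\bx^n$ via \eqref{rough path to rough driver}, differentiation and the bounded linear maps $f\mapsto f\cdot\nabla$, $\nabla_2^{\otimes}$ and $a\mapsto a\nabla^2$, composed with the embeddings of the previous step, show that $\bx\mapsto\mathbf{X}$ is (locally Lipschitz) continuous from $\mathscr{C}^\beta([0,T],H^k)$ into the space of $\beta$-H\"older unbounded rough drivers of \cite{BaR19}; hence $\mathbf{X}^n\to\mathbf{X}$ there. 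Moreover, since $x^n$ has bounded variation, $\bx^n=S^2(x^n)$ is the canonical lift of $x^n$, so $\mathbf{X}^n$ is the canonical rough-driver lift of the time-dependent vector field $t\mapsto x^n_t\cdot\nabla$, and for such a driver the rough-flow equation $dy^n_t=\mathbf{X}^n_{dt}(y^n_t)$ reduces to the ordinary flow $\dot y^n_t=x^n_t(y^n_t)$ of the first step (consistency of rough flows with ODE flows for bounded-variation drivers, \cite{BaR19}).

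Finally I would pass to the limit: by the It\^o--Lyons continuity of the solution map $\mathbf{X}\mapsto y$ for rough flows established in \cite{BaR19}, the convergence $\mathbf{X}^n\to\mathbf{X}$ forces $y^n\to y$ in $C^\beta([0,T],C(\R^d,\R^d))$ (boundedness of the vector fields making the spatial convergence global), which is exactly the assertion. The step I expect to be the main obstacle is the analytic input of the first paragraph: proving the mixed-regularity Sobolev embedding $H^k\otimes H^k\hookrightarrow C^{p+1}_b(\R^d\times\R^d,\R^{d\times d})$ and verifying that it is compatible with the pointwise tensor product used to define $m^{(2)}$ and $\mathbb{X}$ --- this is precisely where the exponent $k>\frac d2+p+1$ and the choice of the Schatten $p=2$ norm enter, and where one must also check that the rough driver produced lies in the regularity class for which the well-posedness and continuity results of \cite{BaR19} are available.
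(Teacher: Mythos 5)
Your proposal follows essentially the same route as the paper: use Theorem~\ref{th:mainHilbert} (after the Sobolev embedding puts the driver in a Hilbert-space setting) to approximate the weakly geometric rough path by signatures of bounded-variation curves, then conclude via the continuity of $\bx\mapsto\mathbf{X}$ and of the solution map from \cite[Theorem 2.6]{BaR19}. The only step the paper spells out that you pass over is the short symmetry computation \eqref{x symmetry} showing that $\mathbb{X}_{st}-\tfrac12 X_{st}(X_{st})$ is a first-order operator, i.e.\ that $\mathbf{X}$ is indeed a weakly geometric rough driver in the sense of \cite{BaR19}.
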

\begin{proof}
Since $\bx$ is weakly geometric we have
\begin{equation} \label{x symmetry}
x_{st}^{(2),\mu,\nu}(\xi, \zeta)   + x_{st}^{(2),\nu,\mu}(\zeta, \xi)   =   (x_t^{\mu}(\xi) - x_s^{\mu}(\xi) )  (x_t^{\nu}(\zeta) - x_s^{\nu}(\zeta) ) 
\end{equation}
for all $\mu,\nu \in \{1, \dots, d\}$ which gives $x^{(2)}_{st} \nabla^2 = \frac12 (x_t - x_{s})(x_t - x_{s})^T \nabla^2$. It follows that 
$$
\mathbb{X}_{st}(\xi) - \frac12 X_{st}(X_{st})(\xi) = \nabla_2^{\otimes} \left( x_{st}^{(2)} - \frac12  x_{st} \otimes x_{st} \right)(\xi,\xi)  \in \mathfrak{X}(\R^d)
$$
so it is a weakly geometric rough driver in the sense \cite{BaR19}. From Theorem \ref{th:mainHilbert} we get can approximate the infinite dimensional rough path $(x,x^{(2)})$ by a sequence of smooth paths. 
The result now follows from \cite[Theorem 2.6]{BaR19} since the embedding $H^{k}(\R^d,\R^d) \subset C_b^p(\R^d, \R^d)$ is continuous. 
\end{proof}

\subsection{Applications to unbounded rough drivers} We briefly mention, at a formal level, how infinite dimensional rough path can be used in the study of rough path partial differential equations. To avoid technicalities we refrain from introducing the full and rather large machinery needed for stating precise results.

Formally, the Lagrangian dynamics $dy_t = \mathbf{X}_{dt}(y_t)$ has a corresponding Eulerian dynamics described by the push-forward $u_t = (y_t)_* \phi$, i.e.
\begin{equation} \label{roughPDE}
du_t = \mathbf{X}_{dt} \nabla u_t , \quad u_0 = \phi.
\end{equation}
The notion \emph{unbounded rough drivers} was introduced in \cite{BaGu15} to give rigorous meaning to \eqref{roughPDE}. In \cite{HN} the notion of \emph{geometric differential rough drivers} was used to characterize a relaxed sufficient condition for the so-called \emph{renormalizability} of unbounded rough drivers. 

Still, the examples where one could verify the condition of a geometric differential rough driver was restricted to the setting when $\mathbf{X}$ belongs to the algebraic tensor of time and space, viz $X_t(x) = \sum_{k=0}^K f_k(x) \omega_t^k$. It was noted in \cite{CaN19} that also unbounded rough drivers can be thought of as infinite dimensional rough paths, see \cite[Section 5]{CaN19} for details. The present paper thus yields approximation results for rough path partial differential equations for more general unbounded rough drivers using \cite[Theorem 2.1]{HN} by simply checking the corresponding symmetry condition \eqref{x symmetry}.

\section{Finite dimensional Carnot-Carath\'eodory geometry} \label{section:CCHilbert}

\subsection{Free nilpotent groups of step 2 in finite dimensions}
Let $E$ be a finite dimensional inner product space and use the notation $X^* = \langle X , \, \cdot \, \rangle$ for any $X \in E$. In the notation of Section~\ref{subsect: Liepoly}, define a Lie algebra $\mathfrak{g}(E) =\LP^2(E)$.  By \Cref{ex:Step2}, we can identify $\mathfrak{g}(E)$ with $ E \oplus \wedge^2 E$ equipped with a Lie bracket structure
\begin{equation} \label{LbracketP2} [X + \mathbb{X}, Y + \mathbb{Y}] =  X \wedge Y, \qquad X,Y \in E, \mathbb{X}, \mathbb{Y} \in \wedge^2 E.\end{equation}
We identify $\wedge^2 E$ with the space of skew-symmetric endomorphisms $\so(E)$ by writing
\begin{equation} \label{identification} X \wedge Y = X^* \otimes Y - Y^* \otimes X. \end{equation}
Consider the corresponding simply connected Lie group $G^{2}(E)$. For the rest of this section, we will use the fact that $\exp: \mathfrak{g}(E) \to G^2(E)$ is a diffeomorphism to identify these as spaces. Using group exponential coordinates $G^2(E)$ is then the space $E \oplus \so(E)$ with multiplication
\begin{equation} \label{product} (x + x^{(2)}) \cdot (y + y^{(2)}) = x+y + x^{(2)} + y^{(2)} + \frac{1}{2} x \wedge y ,\end{equation}
$x,y \in E$, $x^{(2)}, y^{(2)} \in \so(E)$. With this identification the identity is $0$ and inverses are given by $(x+ x^{(2)})^{-1} = -x-x^{(2)}$. Recall the identity in \eqref{exponentialCord} for relating the presentation of $G^2(E)$ as a subset of $\mathcal{A}_2$ and its representation in exponential coordinates.

An absolutely continuous curve $\bGamma(t)$ in $G^2(E)$ with an $L^1$-derivative is called \emph{horizontal} if for almost every $t$,
$$\bGamma(t)^{-1} \cdot \dot \bGamma(t) \in E.$$
In other words, if we write $\bGamma(t) = \gamma(t) + \gamma^{(2)}(t)$ with $\gamma(t) \in E$ and $\gamma^{(2)}(t) \in \wedge^2 E$, then for some $L^1$-function $u(t) \in E$, we have
$$\dot \gamma(t) = u(t), \qquad \dot \gamma^{(2)}(t) = \frac{1}{2} \gamma(t) \wedge u(t).$$

Since $E$ is a generating subspace of $\mathfrak{g}(E)$, it follows from the Chow-Rashevski\"i Theorem \cite{Cho39,Ras38} that any pair of points in $G^2(E)$ can be connected by a horizontal curve. For any pair of points in $\bx, \mathbf{y} \in G^2(E)$, define the Carnot-Carath\'eodory metric (CC-metric) by
$$\rho(\bx, \mathbf{y}) = \left\{ \int_0^1 \| \bGamma(t)^{-1} \cdot \dot \bGamma(t)\|_E \, dt  \, : \, \begin{array}{c} \text{$\bGamma:[0,1] \to G^2(E)$ horizontal,} \\
\bGamma(0) = \bx, \bGamma(1) = \mathbf{y}
\end{array} \right\}.$$
Note that if $\bGamma(t)$ is horizontal, then so is $\bx \cdot \bGamma(t)$. It follows that the distance $\rho$ is left invariant.

From e.g. \cite[Section~7.3]{ABB20},  length minimizers of $\rho$ are all on the form,
\begin{equation} \label{GeodesicFree2} \gamma(t) = x_0 + \int_0^t e^{s\Lambda} u_0 \, ds, \qquad \gamma^{(2)}(t) = x_0^{(2)} + \frac{1}{2} \int_0^t \gamma(s) \wedge e^{s\Lambda} u_0 ds, \end{equation}
for some constant element $\Lambda \in \so(H)$ and $u_0 \in E$.

\begin{example}[Heisenberg group] \label{ex:Heisenberg}
When $E$ is two-dimensional, the group $G^2(E)$ is known as the Heisenberg group. For any choice of orthogonal frame $X$, $Y$, define $Z = \frac{1}{2} (X-iY)$. This means that we can represent any element $\mathbf{y} = aX + b Y + c X \wedge Y$ as
$$\mathbf{y} = (a+ib) Z+ (a-ib) \bar{Z} + c X \wedge Y.$$
We will use a similar notation in the rest of the paper.

If $\Lambda = \lambda X \wedge Y$, $u_0 = u_0 Z + \bar{u}_0 \bar{Z}$, $\gamma(t) = z(t) Z + \bar{z}(t) \bar{Z}$ and $\gamma^{(2)}(t) = \sigma(t) X \wedge Y$ with $z(t), u_0 \in \mathbb{C}$ and $\sigma(t), \lambda \in \mathbb{R}$, then \eqref{GeodesicFree2} becomes
\begin{align*}
z(t) & = z_0 + \int_0^t e^{i\lambda s} u_0 \, ds =  z_0 + \frac{e^{i\lambda t} -1}{i\lambda} u_0 = z_0 + \frac{2\sin(\lambda t/2)}{\lambda} e^{i\lambda /2t} u_0, \\
\sigma(t) & = \sigma_0 + \frac{1}{2} \int_0^t \mathrm{Im}( \bar{z}(s) e^{i \lambda s}u_0 ) \, ds \\
& = \sigma_0 + \frac{2 \sin(\lambda t/2)}{\lambda}  \im\left(  e^{i\lambda/2 t} \bar{z}_0 u_0 \right) - \frac{1}{2} \frac{|u_0|^2}{\lambda}\left( t - \frac{\sin(\lambda t)}{\lambda}  \right).
\end{align*}
where we interpret $\frac{\sin(\lambda t)}{\lambda}$  as $t$ if $\lambda = 0$. If the initial point is the identity $0$, we have
$$z(t)  = \frac{2 \sin(\lambda t/2)}{\lambda} e^{it\lambda/2} u_0, \qquad \sigma(t) = - \frac{|u_0|^2}{2\lambda} \left( t- \frac{\sin (\lambda t)}{\lambda}  \right).$$
If the above geodesic is defined on the intervall $[0,1]$, then it has length $|u_0|$. In particular, we observe the following.
\begin{enumerate}[\rm (a)]
\item A minimizing geodesic defined on $[0,1]$ from $0$ to $z Z + \bar{z} \bar{Z}$ is given by the choice $\lambda =0$. It follows that
$$\rho(0, z Z + \bar{z} \bar{Z}) = |z|.$$
\item A minimizing geodesic defined on $[0,1$] from $0$ to $\sigma X \wedge Y$ is given by the choice $\lambda = \pm 2\pi$ depending on the sign of $\sigma$. Hence, we have that
$$|\sigma| = \frac{\rho(0,  \sigma X \wedge Y)^2}{4\pi}.$$
\item Note that since
$$|z(1)| = |z| = \left| \int_0^1 u(t) dt \right| \leq \int_0^1 |u(t)| dt = |u_0|,$$
we have $|z| \leq \rho(0, z Z + \bar{z} \bar{Z} + \sigma X \wedge Y)$. It then also follows that
\begin{align*}
& 2 \sqrt{\pi} |\sigma|^{1/2}  = \rho(0, \sigma X \wedge Y) \\
& \leq \rho(0, -zZ + \bar{z} \bar{Z}) + \rho(-z Z - \bar{z} \bar{Z}, \sigma X \wedge Y) \\
& = |z| + \rho(0, zZ + \bar{z} \bar{Z} + \sigma X \wedge Y) \leq 2 \rho(0, z Z + \bar{z} \bar{Z}+ \sigma X \wedge Y).
\end{align*}
Using this fact along with the upper bound from the triangle inequality and left invariance, we have
\begin{equation} \label{IneqHeisenberg} \max \{ |z|, \sqrt{\pi} |\sigma|^{1/2}\} \leq \rho(0, zZ + \bar{z} \bar{Z}+ \sigma X \wedge Y) \leq  |z| + 2 \sqrt{\pi} |\sigma|^{1/2}. \end{equation}

\end{enumerate}

\end{example}

\subsection{Dimension-free inequality}\label{subsect: dimensionless}
We want to generalize the inequality \eqref{IneqHeisenberg} to free nilpotent groups of step 2 of arbitrary dimensions. The inequality can be concluded from formulas of the CC-distance to the vertical space in \cite[Appendix A]{RiSe17}, but we include some more details here for the sake of completion and for applications to infinite dimensional vector spaces in Section~~\ref{sec:Infinite}.

Consider the case of a Hilbert space $E$ of arbitrary finite dimension $n\geq 2$.
We want to introduce a class of norms and quasi-norms on $\so(E)$. Any element $\mathbb{X} \in \so(E)$ will have non-zero eigenvalues $\{\pm i \sigma_1, \dots, \pm i \sigma_k\}$ for some $k \geq 0$. We order them in such a way that
$$\sigma_1 \geq \cdots \geq \sigma_k >0.$$
These are also the singular values of $\mathbb{X}$ as $|\mathbb{X} | = \sqrt{- \mathbb{X}^2}$ has exactly these non-zero eigenvalues, with each $\sigma_j$ appearing twice. Define a sequence $\sigma(\mathbb{X}) =(\sigma_j)_{j=1}^\infty$ of non-negative numbers such that $\sigma_j =0$ for $j >k$.
For $0 < p \leq \infty$, we define
$$\| \mathbb{X}\|_{\Sch^p} = 2^{1/p} \| \sigma(\mathbb{X})\|_{\ell^p}.$$
For $p \geq 1$, these are norms called the Schatten $p$-norms \cite[16]{MaV97}. We will also introduce the following map
$$\|\mathbb{X}\|_{cc} = \| \sigma(\mathbb{X})\|_{\ell^1(\mathbb{R};\mathbb{N})} = \sum_{j=1}^\infty j \sigma_j.$$
It is simple to see that $\| \cdot \|_{cc}$ is not a norm when $\dim E > 2$. However, we will show that it is a quasi-norm. Recall that a quasi-norm is a map satisfying the norm axioms except the triangle inequality which is assumed in the form $\|x+y\|\leq K(\|x\|+\|y\|)$ for some $K\geq 1$, \cite[Section I.9]{DaF93}.
From the definition of $\| \cdot \|_{cc}$, we note that
\begin{equation} \label{ComparisonNorm} \frac{1}{2} \| \mathbb{X}\|_{\Sch^1} \leq  \| \mathbb{X} \|_{cc} \leq \frac{1}{4} \|\mathbb{X} \|_{\Sch^{1/2}}.\end{equation}
The latter follows from the fact that for any $k >0$, $\sqrt{a+ kb} \leq \sqrt{a} + \sqrt{b}$ if $b \geq 0$ and $a \geq \frac{(k-1)^2}{4} b$. Hence
$$\sqrt{\sigma_1 + \dots+ k \sigma_k}\leq \sqrt{\sigma_1 + \dots +(k-1) \sigma_{k-1}} + \sqrt{\sigma_k},$$
since $\sigma_1 + \dots +(k-1) \sigma_{k-1} \geq \frac{k(k-1)}{2} \sigma_k$.

Define a homogeneous norm 
$$\threeN x + x^{(2)} \threeN = \max\left\{ \| x \|_E, \sqrt{\pi} \| x^{(2)} \|^{1/2}_{cc} \right\}.$$
We then have the following result.
\begin{theorem} \label{th:Dindep}
Let $E$ be an arbitrary finite dimensional Hilbert space. If $\rho$ is the Carnot-Carath\'eodory distance on $G^2(E)$, then
$$\threeN x + x^{(2)} \threeN  \leq \rho(0, x+ x^{(2)}) \leq 3 \threeN x + x^{(2)} \threeN.$$
\end{theorem}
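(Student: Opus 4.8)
The plan is to reduce everything to the Heisenberg case \eqref{IneqHeisenberg} and use the singular value decomposition of $x^{(2)} \in \so(E)$. First I would diagonalize: write $\so(E)$ as an orthogonal direct sum of two-dimensional blocks on which $x^{(2)}$ acts as rotation by $\sigma_j$, together with the kernel; pick an orthonormal basis $X_1, Y_1, \dots, X_k, Y_k$ adapted to these blocks so that $x^{(2)} = \sum_{j=1}^k \sigma_j X_j \wedge Y_j$. The vector $x \in E$ decomposes as $x = \sum_j (a_j X_j + b_j Y_j) + x^\perp$ where $x^\perp$ lies in a complementary space. The key structural observation to exploit is that the free step-2 group $G^2(E)$ contains, for each block $\spn\{X_j,Y_j\}$, a copy of the Heisenberg group, and projection onto these subgroups is $1$-Lipschitz for $\rho$ (since horizontal curves project to horizontal curves and the projection is norm non-increasing on $E$); likewise projection onto $\spn\{x\}$ (or onto any coordinate subspace) is $1$-Lipschitz.

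\textbf{Lower bound.} For the estimate $\threeN x + x^{(2)} \threeN \le \rho(0, x+x^{(2)})$, I would argue in two parts. For the $\|x\|_E$ term: the projection $G^2(E) \to E$ sending $\bGamma = \gamma + \gamma^{(2)}$ to $\gamma$ is $1$-Lipschitz from $(G^2(E),\rho)$ to $(E,\|\cdot\|_E)$, because along a horizontal curve $\dot\gamma = u$ with $\|u\| = \|\bGamma^{-1}\dot\bGamma\|_E$; hence $\|x\|_E \le \rho(0, x+x^{(2)})$. For the $\sqrt{\pi}\|x^{(2)}\|_{cc}^{1/2}$ term I would project onto each Heisenberg subgroup $G^2(\spn\{X_j,Y_j\})$; this projection is $1$-Lipschitz, and the image of $x + x^{(2)}$ there is (essentially, after discarding the $\gamma^{(2)}$ cross terms which do not hurt the vertical-only lower bound in \eqref{IneqHeisenberg}(b)) a point with vertical component $\sigma_j$, so $\sqrt\pi\sigma_j^{1/2} \le \rho(0, x+x^{(2)})$ for each $j$. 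This only gives the bound with $\max_j \sigma_j$ rather than $\sum_j j\sigma_j$, so the real work is to recover the full $\|\cdot\|_{cc}$. For this I would instead fix a \emph{single} geodesic realizing $\rho(0,x+x^{(2)})$, of the form \eqref{GeodesicFree2} with generator $\Lambda \in \so(E)$, simultaneously block-diagonalize $\Lambda$ and $x^{(2)}$ is not possible in general, so the cleaner route is: use that the vertical part is additive over an orthogonal decomposition adapted to $\Lambda$, apply the Heisenberg computation block-by-block to the geodesic's own blocks, and sum — the Heisenberg bound (b) forces the length contribution of the $j$-th block (with winding number $m_j$) to be at least $2\sqrt\pi (m_j \sigma_j^{(\Lambda)})^{1/2}$-type expressions, and a counting/rearrangement argument (exactly the content behind \eqref{ComparisonNorm} and the inequality $\sqrt{a+kb}\le\sqrt a+\sqrt b$) yields $\sum_j j\sigma_j \le \frac{1}{\pi}\rho^2$. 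I expect this rearrangement — matching the winding numbers of the minimizing geodesic to the ordered singular values $\sigma_1 \ge \cdots \ge \sigma_k$ — to be the main obstacle.

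\textbf{Upper bound.} For $\rho(0, x+x^{(2)}) \le 3 \threeN x + x^{(2)}\threeN$, I would construct an explicit horizontal curve by concatenation. First run a horizontal segment from $0$ with constant control $u_0 = x/\ell$ for time $\ell = \|x\|_E$ to reach $x + \frac12(\text{something})$; then correct the vertical part block-by-block: in each Heisenberg block $\spn\{X_j,Y_j\}$ perform a loop of the Heisenberg-geodesic type from \Cref{ex:Heisenberg}(b), which adds $\sigma_j X_j\wedge Y_j$ at cost $2\sqrt\pi\sigma_j^{1/2}$, and do these loops successively (they commute since the blocks are orthogonal and $[X_i,Y_i]$ is central). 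Summing, the total length is at most $\|x\|_E + 2\sqrt\pi \sum_j \sigma_j^{1/2}$; but I must be careful that $\sum_j \sigma_j^{1/2}$ is not controlled by $\|x^{(2)}\|_{cc}^{1/2} = (\sum_j j\sigma_j)^{1/2}$. Using \eqref{ComparisonNorm}, namely $\|x^{(2)}\|_{cc} \le \frac14\|x^{(2)}\|_{\Sch^{1/2}} = \frac14(\sum_j \sqrt{2\sigma_j})^2$... — actually the needed direction is $\sum_j \sqrt{\sigma_j} \le \sqrt{2}\,\|x^{(2)}\|_{cc}^{1/2}$ which is precisely $(\sum\sqrt{\sigma_j})^2 \le 2\sum j\sigma_j$, a consequence of the telescoping inequality $\sqrt{\sigma_1+\cdots+k\sigma_k} \le \sqrt{\sigma_1+\cdots+(k-1)\sigma_{k-1}} + \sqrt{\sigma_k}$ already recorded after \eqref{ComparisonNorm}. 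Hence the total cost is $\le \|x\|_E + 2\sqrt{2\pi}\,\|x^{(2)}\|_{cc}^{1/2} \le (1 + 2\sqrt 2)\,\threeN x+x^{(2)}\threeN \le 3\,\threeN x + x^{(2)}\threeN$, after absorbing the cross terms produced in the first segment into the loop corrections. The only subtlety is bookkeeping the cross terms $x_i\wedge x_j$ generated while translating along $x$; these are themselves skew elements with singular values dominated by $\|x\|_E^2$, and they can be cancelled within the loop budget, or one reorders the construction (do the $x$-translation last), so I would organize the concatenation to keep the constant at $3$.
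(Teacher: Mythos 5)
Your upper bound breaks down at the vertical-correction step. Performing the Heisenberg loops \emph{successively}, one block at a time, costs $2\sqrt{\pi}\sum_j \sigma_j^{1/2}$, and the inequality you invoke to control this, $\bigl(\sum_j\sqrt{\sigma_j}\bigr)^2\le 2\sum_j j\sigma_j$, is false: for $\sigma_j=1/j^2$, $j=1,\dots,k$, the left side grows like $(\log k)^2$ while the right side grows like $\log k$. The telescoping inequality recorded after \eqref{ComparisonNorm} gives exactly the \emph{opposite} direction, namely $\|x^{(2)}\|_{cc}^{1/2}\le\sum_j\sqrt{\sigma_j}$; what you would prove is an upper bound in terms of $\|x^{(2)}\|_{\Sch^{1/2}}^{1/2}$, not $\|x^{(2)}\|_{cc}^{1/2}$ (and even granting your inequality, $1+2\sqrt2>3$). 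The fix — and the paper's key computation — is to run all the loops \emph{simultaneously} with \emph{distinct winding numbers} $n_j=j$ assigned to the ordered singular values: the control $u(t)=2\sqrt{\pi}\sum_j(j\sigma_j)^{1/2}(e^{-2\pi i jt}Z_j+e^{2\pi ijt}\bar Z_j)$ has constant norm $2\sqrt{\pi}\bigl(\sum_j j\sigma_j\bigr)^{1/2}$ by orthogonality of the blocks, closes up horizontally, and sweeps out exactly $x^{(2)}$ (the cross-block terms integrate to zero precisely because the winding numbers are distinct integers). This gives $\rho(0,x^{(2)})\le 2\sqrt{\pi}\|x^{(2)}\|_{cc}^{1/2}$ and hence the constant $1+2=3$ after concatenating with the straight segment to $x$ (which, incidentally, generates no cross terms at all since $x\wedge x=0$).

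For the lower bound you correctly identify that block-wise Heisenberg projections only yield $\max_j\sigma_j$, but the argument you sketch to recover $\sum_j j\sigma_j$ — analysing the minimizing geodesic to the general point $x+x^{(2)}$ in the blocks of its own generator $\Lambda$ — is left as an acknowledged obstacle and is genuinely problematic: when $x\neq0$ the geodesic does not close up horizontally, so the $\lambda_j$ need not be multiples of $2\pi$, and the endpoint's vertical part is not block-diagonal in the geodesic's frame. The paper sidesteps this entirely: it proves the \emph{exact} formula $\rho(0,x^{(2)})=2\sqrt{\pi}\|x^{(2)}\|_{cc}^{1/2}$ only for purely vertical endpoints, where $\gamma(1)=0$ forces $\lambda_j=2\pi n_j$ with the $n_j$ distinct positive integers, the endpoint is automatically block-diagonal, and minimizing $\sum_j 4\pi n_j\sigma_j$ over distinct integers is a clean rearrangement giving $4\pi\sum_j j\sigma_j$. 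The general lower bound then follows not from geodesic analysis but from the triangle-inequality trick of Example~\ref{ex:Heisenberg}(c): $2\sqrt{\pi}\|x^{(2)}\|_{cc}^{1/2}=\rho(0,x^{(2)})\le\|x\|_E+\rho(0,x+x^{(2)})\le2\rho(0,x+x^{(2)})$. You should restructure both halves of your argument around the exact vertical-distance formula rather than around per-block loops and the geodesic to a general endpoint.
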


\begin{proof}
The minimal geodesic from $0$ to $x \in E$ is just a straight line in $E$, and hence
$$\| x\|_E = \rho(0, x).$$
We will show that we also have
\begin{equation} \label{DistanceVert} \rho(0,  x^{(2)}) = 2\sqrt{\pi} \| x^{(2)} \|_{cc}^{1/2}, \qquad x^{(2)} \in \so(E),\end{equation}
The result then follows from similar steps as in Example~\ref{ex:Heisenberg}.

We will use the geodesic equations in \eqref{GeodesicFree2}. Consider a general solution $\bGamma(t) = \gamma(t) + \gamma^{(2)}(t)$ on $G^2(E)$ with $\bGamma(0) = 0$ and $\bGamma(1) = x^{(2)}$. Consider arbitrary initial values $\Lambda \neq 0$ and $u_0 \neq 0$ for the geodesic equation as in \eqref{GeodesicFree2}. Choose an orthonormal basis $X_1, \dots, X_k, Y_1, \dots, Y_k, T_1, \dots, T_{n-k}$ such that we can write
$$\Lambda = \sum_{j=1}^k \lambda_j X_j \wedge Y_j, \qquad \lambda_j > 0.$$
Introduce again complex notation $Z_j = \frac{1}{2}(X_j -i Y_j)$ and write
$$u_0 = \sum_{j=1}^k w_j Z_j + \sum_{j=1}^k \bar{w}_j \bar{Z}_j + \sum_{j=1}^{n-k} c_j T_j, \qquad w_j \in \mathbb{C}, c_j \in \mathbb{R}.$$
We will then have
$$u(t) = \sum_{j=1}^k e^{i\lambda_j t} w_j Z_j + \sum_{j=1}^k e^{-i\lambda_j t} \bar{w}_j \bar{Z}_j  + \sum_{j=1}^{n-k} c_j T_j, \qquad w_j \in \mathbb{C}, c_j \in \mathbb{R}.$$
We make the following simplifications. If $w_j =0$, then the value of $\lambda_j$ has no effect on $u(t)$. We may hence set it to zero and reduce the value of $k$. Without any loss of generality, we can hence assume that every $w_j$ is non-zero. Next, if we have $\lambda_j = \lambda_l$ for some $1\leq j,l \leq k$ then $e^{i\lambda_jt} w_j Z_j + e^{i\lambda_k t} w_k Z_k = e^{i\lambda_jt} (w_j Z_j +  w_k Z_k) =: e^{i\lambda_jt} \frac{w_{jl}}{2} (X_{jl} - iY_{jl})$ for some orthonormal pair of vectors $X_{jl}, Y_{jl}$. Hence we again obtain the same $u(t)$ if we replace $\lambda_j X_j \wedge Y_j + \lambda_l X_l \wedge Y_l$ with $\lambda_j X_{jl} \wedge Y_{jl}$. By repeating such replacements, we may assume that all values of $\lambda_1, \dots, \lambda_k$ are different.

If $\bGamma(t) = \gamma(t) + \gamma^{(2)}(t)$ is the corresponding geodesic, then
\begin{align*}
\gamma(t) = \sum_{j=1}^k \frac{2 \sin(\lambda_j t/2)}{\lambda_j} e^{i\lambda_j t/2} w_j Z_j + \sum_{j=1}^k \frac{2 \sin(\lambda_j t/2)}{\lambda_j} e^{-i\lambda_j t/2} \bar{w}_j \bar{Z}_j  + \sum_{j=1}^{n-k} tc_j T_j. \end{align*}
From the condition $\gamma(1) = 0$, it follows that $c_1, \dots, c_{n-k}$ all vanish for every $1 \leq j \leq n-k$. Furthermore, since we assume that $w_j \neq 0$, it follows that $\lambda_j = 2\pi n_j$ for some positive integers $n_j$. 

Computing $x^{(2)}$ and using that the integers $n_1, \dots, n_k$ are all different, we obtain
\begin{align*}
& x^{(2)}  = \frac{1}{4\pi} \sum_{j=1}^k \mathrm{Im} \left(\frac{|w_j|^2}{i n_j} \int_0^1 (1- e^{2i\pi n_j t}) dt \right) X_j \wedge Y_j  = - \frac{1}{4\pi} \sum_{j=1}^k \frac{|w_j|^2}{n_j}  X_j \wedge Y_j.
\end{align*}
It follows that the endpoint $x^{(2)}$ has $2k$ non-zero eigenvalues $\{ \pm i \sigma_1, \dots, \pm i \sigma_{k}\}$ with
$$\sigma_j = \frac{1}{4n_j \pi} |w_j|^2.$$
In other words, any local length minimizer $\bGamma(t)$ from $0$ to the point $x^{(2)}$ has length
\begin{align*}
\Length(\bGamma)^2 = \sum_{j=1}^k |w_j|^2 & =  \sum_{j=1}^{k} 4 \pi n_j \sigma_j.
\end{align*}
In order to obtain the minimal value, we use $n_j =l$ if $\sigma_j$ is the $l$-th largest eigenvalue. The result follows.
\end{proof}

Using the identity \eqref{DistanceVert} we also obtain the following result.
\begin{corollary}
$\| \, \cdot \, \|_{cc}$ is a quasi-norm on $\so(E)$, even a $1/2$-norm \cite[Section I.9]{DaF93}, in that it satisfies
$$\| \mathbb{X} + \mathbb{Y}\|^{1/2}_{cc} \leq \| \mathbb{X} \|^{1/2}_{cc}+ \| \mathbb{Y}\|^{1/2}_{cc}, \qquad \| \mathbb{X} + \mathbb{Y} \|_{cc} \leq 2(\| \mathbb{X} \|_{cc} +\| \mathbb{Y} \|_{cc}).$$
\end{corollary}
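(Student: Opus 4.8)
The plan is to derive both inequalities from the distance identity \eqref{DistanceVert}, $\rho(0,x^{(2)}) = 2\sqrt{\pi}\,\|x^{(2)}\|_{cc}^{1/2}$, combined with the triangle inequality and left invariance of the Carnot--Carath\'eodory metric $\rho$. The structural observation that makes this work is that the vertical subspace $\so(E) \subseteq E \oplus \so(E) = G^2(E)$ is an abelian subgroup: applying the product formula \eqref{product} with vanishing horizontal parts gives $\mathbb{X}\cdot\mathbb{Y} = \mathbb{X}+\mathbb{Y}$ for $\mathbb{X},\mathbb{Y}\in\so(E)$, and the inverse formula gives $\mathbb{X}^{-1} = -\mathbb{X}$.

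First I would combine left invariance with the triangle inequality for $\rho$ on $G^2(E)$: for $\mathbb{X},\mathbb{Y}\in\so(E)$,
$$\rho(0,\mathbb{X}+\mathbb{Y}) \leq \rho(0,\mathbb{X}) + \rho(\mathbb{X},\mathbb{X}+\mathbb{Y}) = \rho(0,\mathbb{X}) + \rho\bigl(0,\mathbb{X}^{-1}\cdot(\mathbb{X}+\mathbb{Y})\bigr) = \rho(0,\mathbb{X}) + \rho(0,\mathbb{Y}),$$
where the last equality uses $\mathbb{X}^{-1}\cdot(\mathbb{X}+\mathbb{Y}) = -\mathbb{X}+\mathbb{X}+\mathbb{Y} = \mathbb{Y}$. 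Inserting \eqref{DistanceVert} into all three terms and dividing by $2\sqrt{\pi}$ yields $\|\mathbb{X}+\mathbb{Y}\|_{cc}^{1/2} \leq \|\mathbb{X}\|_{cc}^{1/2} + \|\mathbb{Y}\|_{cc}^{1/2}$, which is the $1/2$-norm inequality. Squaring this and bounding the cross term by $2\|\mathbb{X}\|_{cc}^{1/2}\|\mathbb{Y}\|_{cc}^{1/2} \leq \|\mathbb{X}\|_{cc} + \|\mathbb{Y}\|_{cc}$ gives $\|\mathbb{X}+\mathbb{Y}\|_{cc} \leq 2(\|\mathbb{X}\|_{cc}+\|\mathbb{Y}\|_{cc})$, so the quasi-norm constant $K=2$ works.

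It then remains to check the remaining (elementary) quasi-norm axioms directly from the definition $\|\mathbb{X}\|_{cc} = \sum_{j\geq 1} j\,\sigma_j(\mathbb{X})$: nonnegativity is clear, $\|\mathbb{X}\|_{cc}=0$ forces all singular values of $\mathbb{X}$ to vanish and hence $\mathbb{X}=0$, and absolute homogeneity $\|\lambda\mathbb{X}\|_{cc} = |\lambda|\,\|\mathbb{X}\|_{cc}$ follows because the ordered singular-value sequence of $\lambda\mathbb{X}$ is $|\lambda|\,\sigma(\mathbb{X})$. I do not expect a genuine obstacle; the only points requiring a little care are that the triangle inequality invoked is that of $\rho$ on the full group $G^2(E)$ (the minimizing curve need not remain vertical, which is harmless), and that \eqref{DistanceVert} is available as an exact identity so that it can legitimately be substituted into every term of the triangle inequality.
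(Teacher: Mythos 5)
Your proposal is correct and follows the same route the paper intends: the corollary is stated there as a direct consequence of the identity \eqref{DistanceVert}, obtained exactly as you do by combining the triangle inequality and left invariance of $\rho$ with the fact that vertical elements multiply additively in $G^2(E)$, and then squaring to get the quasi-norm constant $2$. Your additional check of the remaining quasi-norm axioms (definiteness and homogeneity of $\|\cdot\|_{cc}$) is routine and consistent with the paper's implicit argument.
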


\section{Geometric rough paths on Hilbert spaces} \label{sec:hilbert spaces}

\subsection{Free nilpotent groups on step~2 from Hilbert spaces} \label{sec:Infinite}
Let $E$ be a real Hilbert space, not necessarily finite dimensional. We choose and fix a tensor norm $\|\cdot\|_\otimes$ on the algebraic tensor product $E \otimes_a E$ which is assumed to satisfy properties 1. and 2. from \Cref{tensornorm}. Moreover, we assume that $\|\cdot\|_\otimes$ lies (pointwise) between the injective and projective tensor norms (cf.\ e.g. \cite{Ryan02}). As mentioned in Example~\ref{exa:proj}, we can identity $E\otimes_a E$ with finite rank operators, and we can consider $E \otimes E$ as the closure of finite rank operators with respect to $\| \cdot \|_{\otimes}$.

In describing $\LP^2(E) = E \oplus \bigwedge^2 E$, through \eqref{identification} we identify the algebraic wedge product $\bigwedge_a^2 E$ with the space of all finite rank skew-symmetric operators denoted by $\so_a(E)$. We identify $\LP^2(E)$ with $E \oplus \so_{\otimes}(E)$ where $\so_\otimes(E)$ are the skew-symmetric operators on $E$ that are in the closure of $\so_a(E)$ with respect to $\| \cdot \|_{\otimes}$ and with brackets as in \eqref{LbracketP2}. If we give $\mathfrak{g}(E)$ a norm
$$\|x + x^{(2)}\|_{\mathfrak{g}(E)} = \max \left\{ \|x \|_E,  \| x^{(2)} \|_{\otimes} \right\}.$$
then it has the structure of a Banach Lie algebra.

For any compact skew-symmetric map $\mathbb{X}: E \to E$, define a sequence $\sigma(\mathbb{X}) = (\sigma_j)_{j=1}^\infty$ such that $|\mathbb{X}| = \sqrt{- \mathbb{X}^2}$ has eigenvalues in non-increasing order $\sigma_1 = \sigma_1 \geq \sigma_2 = \sigma_2 \geq \cdots$. % no [
For $p \in (0, \infty]$, %no )
let $\so_p(E)$ denote the space of compact skew-symmetric operators $\mathbb{X}$ with finite Schatten $p$-norm $\| \mathbb{X}\|_{\Sch^p} = 2^{1/p} \| \sigma(X)\|_{\ell^p}$. As $p= \infty$ and $p=1$ correspond to respectively the injective norm and the projective norm, we have
$$\so_1(E) \subseteq \so_\otimes(E) \subseteq \so_\infty(E).$$
Introduce the space $\so_{cc}(E)$ as the subspace of $\so_\infty(E)$ of elements $\mathbb{X}$ such that $\| \mathbb{X}\|_{cc} := \sum_{j=1}^\infty j \sigma_j$ is finite. Since all compact operators are limits of finite rank operators (\cite[Corollary 16.4]{MaV97}), all the previously mentioned inequalities from Section \ref{subsect: dimensionless} still hold. In particular, $\| \cdot \|_{cc}$ is a quasi-norm and we have inclusions
$$\so_{1/2}(E) \subseteq \so_{cc}(E) \subseteq \so_1(E).$$

The group $G^2(E)$ corresponding to $\mathfrak{g}(E)$ can be considered in exponential coordinates as the set $\mathfrak{g}(E)$ with group operation as in \eqref{product}. We define the distance $d$ on $G^2(E)$ by $d(\bx, \mathbf{y}) = \| \bx^{-1} \mathbf{y}\|_{\mathfrak{g}(E)}$.
Let $t\mapsto u(t)$ be any function in $L^1([0,1], E)$, and let $\bGamma_{u}$ be the solution of
$$\bGamma_u(t)^{-1} \cdot \dot \bGamma_u(t) = u(t) , \qquad \bGamma_u(0) = 0.$$
Recall that $0$ is the identity, since we are using exponential coordinates. This curve always exists from the $L^1$-regularity property of the Banach Lie group~$G^2(E)$ (see \cite{Glo15} and also \Cref{re:L1regularity}). For any $\mathbf{x}, \mathbf{y} \in G^2(E)$, we define $\rho(\mathbf{x}, \mathbf{y}) \in [0,\infty]$ by
\begin{align*}
\rho(\mathbf{x}, \mathbf{y})& = \rho(0, \mathbf{x}^{-1} \cdot \mathbf{y}), \\
\rho(0, \mathbf{x}) & = \inf \left\{ \| u \|_{L^1} \, : \, u \in L^1([0,1],E), \bGamma_u(1) = \mathbf{x} \right\}.
\end{align*}

\subsection{Properties of projections} \label{sec:Projections}
Let $F$ be a closed subspace of $E$. We write $\Pro_F: E \to F$ for the corresponding orthonormal projection. We then write the map $\pr_F: G^2(E) \to G^2(F)$ for the corresponding map
$$\pr_F(x + x^{(2)}) = \Pro_F + (\Pro_F x^{(2)} \Pro_F), \qquad x \in E, x^{(2)} \in \so_\otimes(E) .$$
We then emphasize the following properties.
\begin{lemma} 
\begin{enumerate}[\rm (a)]
\item $\pr_F$ is a group homomorphism from $G^2(E)$ and $G^2(F)$.
\item Let $\rho_F$ denote the Carnot-Carath\'eodory distance defined on $G^2(F)$. For any $\bx, \mathbf{y} \in G^2(E)$, we have
$$\rho_F(\pr_F \bx, \pr_F \mathbf{y}) = \rho(\pr_F \bx, \pr_F \mathbf{y}) \leq \rho( \bx, \mathbf{y}).$$
In particular, if there is a geodesic from $\bx$ to $\mathbf{y}$ in $F$ with respect to $\rho_F$, then this is also the geodesic in $G^2(E)$ with respect $\rho$.
\end{enumerate}
\end{lemma}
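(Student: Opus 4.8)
The plan is to establish part (a) by a direct computation with the group law \eqref{product}, and part (b) by first showing $\pr_F$ maps horizontal curves to horizontal curves without increasing length, then arguing the reverse inequality via lifting.

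For part (a), I would take $\bx = x + x^{(2)}$ and $\mathbf{y} = y + y^{(2)}$ in $G^2(E)$ and compute both $\pr_F(\bx \cdot \mathbf{y})$ and $\pr_F(\bx) \cdot \pr_F(\mathbf{y})$ using \eqref{product}. The first equals $\Pro_F(x+y) + \Pro_F(x^{(2)} + y^{(2)} + \tfrac12 x\wedge y)\Pro_F$, while the second equals $\Pro_F x + \Pro_F y + \Pro_F x^{(2)}\Pro_F + \Pro_F y^{(2)}\Pro_F + \tfrac12 (\Pro_F x)\wedge(\Pro_F y)$. Since $\Pro_F$ is linear, everything matches once I check the single identity $\Pro_F(x\wedge y)\Pro_F = (\Pro_F x)\wedge(\Pro_F y)$; this follows from the formula $x\wedge y = x^*\otimes y - y^*\otimes x$ in \eqref{identification} together with the fact that $\Pro_F$ is self-adjoint, so $\Pro_F(x^*\otimes y)\Pro_F = (\Pro_F x)^*\otimes(\Pro_F y)$. (I should also note that $\pr_F$ is continuous, since $\|\Pro_F x^{(2)}\Pro_F\|_\otimes \le \|x^{(2)}\|_\otimes$ — the tensor norm lies below the projective norm and projections have norm $\le 1$ — so $\pr_F$ is a genuine Lie group homomorphism.)

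For part (b), the key observation is that if $\bGamma_u$ is the horizontal curve driven by $u \in L^1([0,1],E)$ with $\bGamma_u(0)=0$, then $\pr_F \circ \bGamma_u$ is the horizontal curve in $G^2(F)$ driven by $\Pro_F u$: this is immediate by applying the homomorphism property from (a) to the defining ODE $\bGamma_u(t)^{-1}\cdot\dot\bGamma_u(t) = u(t)$, giving $(\pr_F\bGamma_u(t))^{-1}\cdot\tfrac{d}{dt}(\pr_F\bGamma_u(t)) = \Pro_F u(t)$. Hence $\rho_F(0,\pr_F\bx) \le \|\Pro_F u\|_{L^1} \le \|u\|_{L^1}$ for any $u$ driving $\bx$; taking the infimum gives $\rho_F(\pr_F\bx, \pr_F\mathbf{y}) \le \rho(\bx,\mathbf{y})$ after using left invariance of both metrics and $\pr_F(\bx^{-1}\mathbf{y}) = (\pr_F\bx)^{-1}(\pr_F\mathbf{y})$. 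For the equality $\rho_F(\pr_F\bx,\pr_F\mathbf{y}) = \rho(\pr_F\bx,\pr_F\mathbf{y})$, the inequality $\ge$ is the same argument applied to the subspace $F\subseteq E$ (any horizontal curve in $G^2(F)$ is a horizontal curve in $G^2(E)$ with the same length, since the inclusion $F\hookrightarrow E$ induces an isometric embedding $G^2(F)\hookrightarrow G^2(E)$ respecting the horizontal structure), and $\le$ is the projection argument since $\pr_F$ restricts to the identity on $G^2(F)$. The final claim about geodesics is then formal: a curve in $F$ realizing $\rho_F$ has, by the equality just shown, the same length measured by $\rho$, and since no shorter $\rho$-path can exist (as $\rho \le$ is not the direction we need — rather $\rho(\pr_F\bx,\pr_F\mathbf{y})$ bounds it from below and is achieved), it is $\rho$-minimizing.

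I do not expect a serious obstacle here; the only point requiring minor care is the interchange of $\pr_F$ with the left-invariant ODE defining $\bGamma_u$, which relies on $\pr_F$ being a smooth homomorphism (established in (a)) so that it intertwines the right logarithmic derivatives, and on the $L^1$-regularity of $G^2(F)$ guaranteeing that $\pr_F\bGamma_u$ is indeed \emph{the} curve driven by $\Pro_F u$ by uniqueness. A secondary subtlety is confirming that the inclusion $F \hookrightarrow E$ preserves the chosen tensor norm on the relevant subspace — but this holds because restricting $\|\cdot\|_\otimes$ to $F\otimes_a F$ again lies between injective and projective norms, and for skew-symmetric operators supported on $F$ the Schatten data, hence the quasi-norm $\|\cdot\|_{cc}$, are unchanged.
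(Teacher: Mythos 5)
Your proposal is correct and follows essentially the same route as the paper: part (a) by direct computation with the group law (the key identity being $\Pro_F(x\wedge y)\Pro_F=(\Pro_F x)\wedge(\Pro_F y)$), and part (b) by observing that $\pr_F$ sends a horizontal curve driven by $u$ to a horizontal curve driven by $\Pro_F u$, of less or equal length. Your extra details — the intertwining of $\pr_F$ with the defining ODE and the two-inequality argument for $\rho_F=\rho$ on $G^2(F)$ — simply make explicit what the paper's terse proof leaves implicit.
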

\begin{proof}
(a) follows from the definition of the definition of the group operation. Using (a), we only need to prove that $\rho(0,\pr_F \bx) \leq \rho(0, \bx)$ to prove (b) . We observe that if $\bGamma(t)$ is a horizontal curve from $0$ to $\bx$, then $\pr_F \bGamma(t)$ is a horizontal curve of less or equal length with endpoint $\pr_F \mathbf{x}$.
\end{proof}
\begin{lemma} \label{lemma:ProjSch}
If $\| \cdot \|_{\otimes} = \| \cdot \|_p$ is the Schatten $p$-norm, the following properties hold.
\begin{enumerate}[\rm (a)]
\item For any closed subspace $F$ of $E$ and $\bx, \mathbf{y} \in G^2(E)$, we have
$$d(\pr_F \mathbf{x}, \pr_F \mathbf{y}) \leq d( \mathbf{x},  \mathbf{y}).$$
\item For any $\bx = x + x^{(2)}$, there is a sequence of finite dimensional subspaces $F_1 \subseteq F_2 \subseteq \cdots$ such that
$$x \in F_n \text{ for any $n$,} \qquad \lim_{n \to 0} d(\bx, \pr_{F_n} \bx) = 0.$$
\end{enumerate}
\end{lemma}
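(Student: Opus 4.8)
The plan is to treat the two parts separately, using the explicit description of $\pr_F$ in exponential coordinates together with standard facts about the Schatten $p$-norm. For part (a), write $\bx = x + x^{(2)}$, $\mathbf{y} = y + y^{(2)}$ and compute $\bx^{-1}\mathbf{y}$ using \eqref{product}: its $E$-component is $y - x$ and its $\so_p(E)$-component is $y^{(2)} - x^{(2)} + \tfrac12 (-x)\wedge(y-x)$. The key point is that $\pr_F$ is a group homomorphism (by the preceding lemma, part (a)), so $(\pr_F\bx)^{-1}(\pr_F\mathbf{y}) = \pr_F(\bx^{-1}\mathbf{y})$; hence it suffices to show $\|\pr_F \mathbf{z}\|_{\mathfrak g(F)} \le \|\mathbf{z}\|_{\mathfrak g(E)}$ for every $\mathbf{z} = z + z^{(2)} \in G^2(E)$. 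The first coordinate is immediate since $\Pro_F$ is an orthogonal projection and thus $\|\Pro_F z\|_E \le \|z\|_E$. For the second coordinate I must check that $\|\Pro_F z^{(2)} \Pro_F\|_{\Sch^p} \le \|z^{(2)}\|_{\Sch^p}$; this follows because compressing a compact operator by an orthogonal projection $P = \Pro_F$ (i.e. $z^{(2)} \mapsto P z^{(2)} P$) does not increase any Schatten norm — one can see this via the singular-value characterization $\|A\|_{\Sch^p}$ and the fact that $\|P A P\| \le \|A\|$ in operator norm together with Ky Fan / majorization of singular values, or more directly from $\|PAP\|_{\Sch^p} \le \|PA\|_{\Sch^p} \le \|A\|_{\Sch^p}$ using that multiplication by a contraction is a contraction on each Schatten class. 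Taking the max over the two coordinates gives (a).

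For part (b), fix $\bx = x + x^{(2)}$ with $x^{(2)} \in \so_p(E)$. Since $x^{(2)}$ is a compact skew-symmetric operator, by the spectral theorem it has an orthonormal system of eigenvectors realizing its eigenvalues $\{\pm i\sigma_j\}$, so for each $m$ let $P_m$ be the orthogonal projection onto the span of $x$ together with the eigenvectors of $x^{(2)}$ corresponding to the $m$ largest $\sigma_j$'s; let $F_m$ be its range, a finite dimensional subspace containing $x$, and arrange the nesting $F_1 \subseteq F_2 \subseteq \cdots$. Then $\pr_{F_m}\bx = x + P_m x^{(2)} P_m$, and by the homomorphism property $d(\bx, \pr_{F_m}\bx) = \| \bx^{-1}\pr_{F_m}\bx\|_{\mathfrak g(E)}$; computing via \eqref{product}, the $E$-component of $\bx^{-1}\pr_{F_m}\bx$ vanishes (because $x \in F_m$ so $\Pro_{F_m}x = x$ and the wedge term involves $\Pro_{F_m}x - x = 0$), leaving only $\|P_m x^{(2)} P_m - x^{(2)}\|_{\Sch^p}$. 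Finally $P_m x^{(2)} P_m \to x^{(2)}$ in $\Sch^p$: since $P_m \to \id$ strongly and $x^{(2)}$ is compact (hence approximable by finite-rank operators, \cite[Corollary 16.4]{MaV97}), one has $\|x^{(2)} - P_m x^{(2)} P_m\|_{\Sch^p} \to 0$ — indeed with this particular choice of $P_m$ adapted to the eigendecomposition, $x^{(2)} - P_m x^{(2)} P_m$ is exactly the ``tail'' operator with singular values $\sigma_{m+1} \ge \sigma_{m+2} \ge \cdots$, whose $\Sch^p$-norm is the tail of a convergent series. This proves (b).

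The main obstacle is the compression inequality $\|P A P\|_{\Sch^p} \le \|A\|_{\Sch^p}$ uniformly in $0 < p \le \infty$; for $p \ge 1$ it is classical (Schatten norms are symmetric norms and submultiplicative against contractions), but the statement of Theorem~\ref{th:mainHilbert} also invokes $p$ outside $[1,\infty]$ only through the auxiliary quasi-norms $\|\cdot\|_{cc}$, $\|\cdot\|_{\Sch^{1/2}}$ — here in Lemma~\ref{lemma:ProjSch} we only need $1 \le p \le \infty$, so the classical argument suffices. The other mild subtlety is making the choice of $F_m$ in (b) genuinely nested while still adapted to the eigenbasis; this is handled by enlarging $F_m$ step by step, adding one eigenvector (or eigenplane, for the $2$-dimensional real eigenspaces of a skew-symmetric operator) at a time, which only helps the convergence.
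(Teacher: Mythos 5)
Your proposal is correct and follows essentially the same route as the paper: reduce via the homomorphism/left-invariance to the distance from the identity, show both components contract under orthogonal projection (the paper verifies $\sigma_j(\Pro_F x^{(2)}\Pro_F)\leq\sigma_j(x^{(2)})$ via the min--max characterization, whereas you invoke the equivalent contraction-ideal property $\|PAP\|_{\Sch^p}\leq\|A\|_{\Sch^p}$), and for (b) take $F_n$ spanned by $x$ together with the singular vectors of $x^{(2)}$, so that the error is controlled by the Schatten tail. The only nitpick is that $x^{(2)}-P_mx^{(2)}P_m$ need not be \emph{exactly} the tail operator when $x\notin\spn\{X_j,Y_j\}$, but your fallback argument via strong convergence of $P_m$ and density of finite-rank operators (or an application of part (a), as the paper does) closes this.
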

\begin{proof}
\begin{enumerate}[\rm (a)]
\item Again it is sufficient to prove that $d(0, \pr_F \bx) \leq d(0, \bf x)$ for any $\bx = x + x^{(2)} \in G^2(E)$. We see that $\| \Pro_F x\|_E \leq \| x \|_E$ and furthermore, $\| \Pro_F x^{(2)} \Pro_F\|_{\Sch^p} \leq \| x^{(2)}\|_{\Sch^p}$ since
\begin{align*}
\sigma_{j+1}(\Pr_F x^{(2)} \Pr_F) & = \max_{\rank(\tilde E) = 2j+1} \min_{\begin{subarray}{c} y \in \tilde E \\ \| y\|_E =1\end{subarray}} \| {\Pr}_{F} x^{(2)} {\Pr}_F y \|_E \\
& \leq  \max_{\rank(\tilde E) = 2j+1} \min_{\begin{subarray}{c} y \in \tilde E \\ \| y\|_E =1\end{subarray}} \|  x^{(2)} y \|_E = \sigma_{j+1}(x^{(2)}).
\end{align*}
\item Write $\bx = x+ x^{(2)}$ and give the singular value decomposition
\begin{equation} \label{SVD} x^{(2)} = \sum_{j=1}^\infty \sigma_j X_j \wedge Y_j, \qquad \sigma_1 \geq \sigma_2 \geq \cdots,\end{equation}
with $X_1, Y_1, X_2, Y_2, \dots $ all orthogonal unit vector fields. We define
$$\tilde F_n = \spn\{ X_j, Y_j \, : \, j =1, \dots, n \}, \qquad   F_n = \spn\{ x, \tilde F_n \} ,$$
Then by left invariance
$$d(\bx, \pr_{F_n} \bx) = d(0, \pr_{F_n^\perp} x^{(2)}) \stackrel{(a)}{\leq} d(0, \pr_{\tilde F_n^\perp} x^{(2)}) = \frac{1}{2} 2^{1/p} \left(\sum_{j=n+1}^\infty \sigma(x^{(2)})^p \right)^{1/p}$$
which converge to zero by definition. \qedhere
\end{enumerate}
\end{proof}

\subsection{Geodesic completeness}
One of the main steps in completing Theorem~\ref{th:mainHilbert} will be to establish that $\rho$ makes a subset into a geodesic space.
\begin{theorem} \label{th:main}
Let $E$ be a Hilbert space and define $G^2(E)$ relative to a tensor norm $\|\cdot\|_\otimes$ satisfying 1. and 2. from \Cref{tensornorm} and bounded from below by the injective tensor product $\| \cdot \|_{\Sch^\infty}$. If we define $G^2(E) := \exp(E \oplus \so_{cc}(E))$, then
$$G_{cc}^2(E) = \{ \mathbf{x} \in G^2(E) \, : \, \rho(0, \mathbf{x}) < \infty \},$$
Furthermore, the metric space $(G_{cc}^2(E),\rho)$ is a complete, geodesic space and if we define
$$\threeN x + x^{(2)} \threeN = \max \left\{ \| x\|_E, \sqrt{\pi} \|x^{(2)} \|_{cc}^{1/2} \right\},$$
then
\begin{equation} \label{Bounds}
\threeN \mathbf{x} \threeN \leq \rho(0,\mathbf{x}) \leq 3 \threeN \mathbf{x} \threeN.
\end{equation}
\end{theorem}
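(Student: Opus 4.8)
I would prove Theorem~\ref{th:main} by a finite-dimensional exhaustion, reducing everything to Theorem~\ref{th:Dindep} and the projection lemmas of Section~\ref{sec:Projections}. First I establish the set equality $G_{cc}^2(E) = \{\mathbf{x} : \rho(0,\mathbf{x}) < \infty\}$. For the inclusion "$\subseteq$", given $\mathbf{x} = x + x^{(2)}$ with $x^{(2)} \in \so_{cc}(E)$, I take the singular value decomposition $x^{(2)} = \sum_j \sigma_j X_j \wedge Y_j$ and build, on each two-dimensional block $\spn\{X_j, Y_j\}$, a Heisenberg-type geodesic reaching $\sigma_j X_j \wedge Y_j$ of length $2\sqrt{\pi\sigma_j}$ (Example~\ref{ex:Heisenberg}(b)), concatenated with a straight segment in $E$ for the $x$-part; the total length is controlled by $\|x\|_E + 2\sqrt{\pi}\,\|x^{(2)}\|_{cc}^{1/2} = $ essentially the right-hand side of \eqref{Bounds}, using that $\sum_j \sqrt{\sigma_j} \cdot (\text{appropriate } j) < \infty$ is exactly $\|x^{(2)}\|_{cc}^{1/2}$-type summability after the $\sqrt{a + kb} \le \sqrt a + \sqrt b$ trick from \eqref{ComparisonNorm}. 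Conversely if $\rho(0,\mathbf{x}) < \infty$, I use that for each finite-dimensional $F_n$ from Lemma~\ref{lemma:ProjSch}(b), $\rho_{F_n}(0, \pr_{F_n}\mathbf{x}) \le \rho(0,\mathbf{x})$, so by Theorem~\ref{th:Dindep} applied in $F_n$, $\threeN \pr_{F_n}\mathbf{x} \threeN \le \rho(0,\mathbf{x})$ uniformly in $n$; letting $n \to \infty$ and using monotone convergence of the singular values gives $\|x^{(2)}\|_{cc} < \infty$, i.e. $\mathbf{x} \in G_{cc}^2(E)$, and simultaneously yields the lower bound $\threeN\mathbf{x}\threeN \le \rho(0,\mathbf{x})$ in \eqref{Bounds}. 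The upper bound $\rho(0,\mathbf{x}) \le 3\threeN\mathbf{x}\threeN$ is exactly the explicit-geodesic construction above.

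**Completeness.** Given a $\rho$-Cauchy sequence $\mathbf{x}^k = x^k + x^{(2),k}$ in $G_{cc}^2(E)$, the bound \eqref{Bounds} together with left invariance shows $\|x^k - x^j\|_E$ and $\|x^{(2),k} - x^{(2),j}\|_{cc}^{1/2}$ (after passing through $(\mathbf{x}^j)^{-1}\mathbf{x}^k$ and the group law \eqref{product}) are Cauchy, hence the sequence converges in the $\threeN\cdot\threeN$-topology to some $\mathbf{x} = x + x^{(2)}$ with $x^{(2)} \in \so_{cc}(E)$ (completeness of $\so_{cc}(E)$ under $\|\cdot\|_{cc}$ needs a short argument: it is a weighted $\ell^1$-type space, using that $\sigma$ is $1$-Lipschitz-ish in the relevant sense — or one can argue via $\so_1(E) \supseteq \so_{cc}(E)$ and lower semicontinuity of $\|\cdot\|_{cc}$ under $\so_1$-convergence). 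Then $\threeN(\mathbf{x}^k)^{-1}\mathbf{x}\threeN \to 0$, and by \eqref{Bounds} again $\rho(\mathbf{x}^k, \mathbf{x}) \to 0$, so $(G_{cc}^2(E), \rho)$ is complete.

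**Geodesic property.** This is the step I expect to be the main obstacle, since the Hopf--Rinow theorem is unavailable in infinite dimensions. The strategy: fix $\mathbf{x}, \mathbf{y} \in G_{cc}^2(E)$; by left invariance reduce to connecting $0$ and $\mathbf{z} = \mathbf{x}^{-1}\mathbf{y} = z + z^{(2)}$. Take the SVD of $z^{(2)}$ and the nested finite-dimensional spaces $F_n$ of Lemma~\ref{lemma:ProjSch}(b) containing $z$; in each $F_n$ the finite-dimensional Chow--Rashevski\u\i{} plus Hopf--Rinow (or direct existence of length minimizers of the form \eqref{GeodesicFree2}) gives a constant-speed geodesic $\bGamma_n$ from $0$ to $\pr_{F_n}\mathbf{z}$ with $\Length(\bGamma_n) = \rho_{F_n}(0, \pr_{F_n}\mathbf{z}) = \rho(0, \pr_{F_n}\mathbf{z})$. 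Since $\pr_{F_n}\mathbf{z} \to \mathbf{z}$ in $d$ and hence (via \eqref{Bounds}, monotone convergence of singular values) $\rho(0, \pr_{F_n}\mathbf{z}) \to \rho(0,\mathbf{z})$, these geodesics have bounded length; I would extract a limiting horizontal curve $\bGamma$ from $0$ to $\mathbf{z}$ by an Arzel\`a--Ascoli argument on the controls $u_n \in L^1([0,1], E)$ (the $u_n$ live in the $F_n$, are uniformly $L^1$-bounded, in fact $L^\infty$-bounded by constant-speed parametrization, so have a weak-$*$ subsequential limit in some $L^2$ after noting the controls are bounded; the solution map $u \mapsto \bGamma_u$ is continuous for this topology by $L^1$-regularity and the explicit bilinear form of the $\gamma^{(2)}$-equation) with $\Length(\bGamma) \le \liminf \Length(\bGamma_n) = \rho(0,\mathbf{z})$. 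Lower semicontinuity of length then forces equality and constant speed of the limit, giving the geodesic. The delicate point is the weak-limit/continuity argument for the controls: one must ensure the limit is genuinely a $[0,1]$-parametrized horizontal curve of the claimed length and that no length escapes in the limit, which is handled by reparametrizing each $\bGamma_n$ to constant speed (so $\|u_n(t)\|_E \equiv \rho(0,\pr_{F_n}\mathbf{z}) \le \rho(0,\mathbf{z})$, a uniform $L^\infty$ bound) and then passing to a weakly convergent subsequence in $L^2([0,1], E)$, using that the explicit formula for $\gamma^{(2)}$ depends on $u$ only through $\gamma$ (uniformly convergent) and $u$ itself (weakly convergent) against a fixed skew form, so the limit curve is horizontal with the right endpoint.
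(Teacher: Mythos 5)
Your lower bound (finite-dimensional projections plus Theorem~\ref{th:Dindep}) and the broad plan of finite-dimensional exhaustion match the paper, but there is a genuine gap in your upper-bound construction for vertical elements $x^{(2)}=\sum_j\sigma_j X_j\wedge Y_j\in\so_{cc}(E)$. Running the Heisenberg geodesic of Example~\ref{ex:Heisenberg}(b) on each block and concatenating produces a curve of length $2\sqrt{\pi}\sum_j\sigma_j^{1/2}$, which is a Schatten-$1/2$ type quantity, not $2\sqrt{\pi}\|x^{(2)}\|_{cc}^{1/2}$: the inequality \eqref{ComparisonNorm} you invoke goes the wrong way, namely $\|x^{(2)}\|_{cc}^{1/2}\leq\sum_j\sigma_j^{1/2}$, and for instance $\sigma_j=j^{-2}(\log(j+1))^{-2}$ has $\|x^{(2)}\|_{cc}<\infty$ while $\sum_j\sigma_j^{1/2}=\infty$, so your curve does not even prove $\rho(0,x^{(2)})<\infty$ on all of $\so_{cc}(E)$, let alone the factor-$3$ bound. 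Running the block geodesics simultaneously with a common winding number does not repair this: when the frequencies coincide the cross-block contributions to $\gamma^{(2)}$ do not cancel (the level-one path stays in a single two-plane and sweeps out a rank-two area element, not $\sum_j\sigma_jX_j\wedge Y_j$). The missing idea --- and the very reason the weight $j$ appears in $\|\cdot\|_{cc}$ --- is to drive all blocks simultaneously with pairwise distinct integer winding numbers $n_j=j$ ordered by decreasing $\sigma_j$: the paper's control $u(t)=2\sqrt{\pi}\sum_j(j\sigma_j)^{1/2}(e^{-2\pi i j t}Z_j+e^{2\pi i j t}\bar Z_j)$ has constant speed $2\sqrt{\pi}(\sum_j j\sigma_j)^{1/2}$, the cross-frequency integrals vanish, and the endpoint is exactly $x^{(2)}$, giving the identity $\rho(0,x^{(2)})=2\sqrt{\pi}\|x^{(2)}\|_{cc}^{1/2}$, which is also needed downstream (e.g.\ to get $\rho(\pr_{F_n}\bx,\bx)\to0$ and hence $\rho(0,\pr_{F_n}\bx)\to\rho(0,\bx)$, a fact your geodesic step uses).

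For the geodesic property your route is genuinely different from the paper's: the paper proves relative compactness in $G^2(E)$ of an explicit set $K(\bx)$ containing all minimizers to the finite-dimensional approximations (Lemma~\ref{lemma:Compact}), shows $\rho$-balls are closed in $G^2(E)$ (Lemma~\ref{lemma:closed}), extracts a midpoint of the approximating geodesics and concludes via the midpoint criterion of \cite[Theorem 2.4.16]{BaBaI01}, whereas you use the direct method on controls. Your approach is viable, but not as written: weak convergence $u_n\rightharpoonup u$ in $L^2([0,1],E)$ only gives pointwise \emph{weak} convergence of $\gamma_n(t)$ in $E$; the claimed uniform norm convergence of $\gamma_n$ has no justification in infinite dimensions (bounded equicontinuous families are not norm-precompact, which is exactly the Hopf--Rinow obstruction). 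The bilinear endpoint identity can still be recovered: for fixed $v,w\in E$, $\langle\gamma_n(\cdot),v\rangle\to\langle\gamma(\cdot),v\rangle$ boundedly pointwise, hence strongly in $L^2[0,1]$, while $\langle u_n(\cdot),w\rangle\rightharpoonup\langle u(\cdot),w\rangle$, so the strong--weak products converge and identify $\tfrac12\int_0^1\gamma\wedge u=z^{(2)}$ in the weak operator topology; together with weak lower semicontinuity of the convex functional $u\mapsto\int_0^1\|u\|\,dt$ this yields a length minimizer. So the geodesic step should be rewritten with this pairing argument in place of the uniform-convergence claim, and it silently depends on the corrected upper bound above.
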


We will do the proof of this theorem in two parts. In the first part, we will show that $G_{cc}^2(E)$ is indeed exactly the set with finite $\rho$-distance and that the inequality \eqref{Bounds} holds. In the second part, we show that it is a geodesic space. 

\begin{proof}[Proof of Theorem~\ref{th:main}, Part I]
We will begin by introducing the following notation, which we will use in both parts of the proof. Recall the definition of $\pr_F: G^2(E) \to G^2(F) \subseteq G^2(E)$ for some closed subspace $F$ from Section~\ref{sec:Projections}. Write $\pr_{F,\perp} = \pr_{F^\perp}$  and write a projection operator
$$\pr_{F \wedge F^\perp}(\bx) ={\Pro}_F x^{(2)} {\Pro}_{F^\perp} + {\Pro}_{F^\perp} x^{(2)} {\Pro}_{F} = \bx - \pr_F \bx - \pr_{F, \perp} \bx, \qquad \bx = x+ x^{(2)}.$$
We have already shown the result for finite dimensional spaces, so we assume that $E$ is infinite dimensional. \smallskip

\paragraph{\it Step 1: The CC-distance is finite on algebraic elements} Let $G_a(E) = \exp(E \oplus \so_a(E)$ and
consider an arbitrary element $\mathbf{x} = x+ x^{(2)} \in G_a^2(E)$ with $x^{(2)} = \sum_{j=1}^n \sigma_j X_j \wedge Y_j$ being the singular value decomposition as in \eqref{SVD}.
Define the finite dimensional subspace $F = \spn \{ x, X_1, Y_1, \dots, X_n, Y_n\}$. We then observe that since $\mathbf{x} \in G^2(F)$, $\rho(0, \mathbf{x}) < \infty$ and there is a minimizing geodesic from $0$ to $\mathbf{x}$. Also, any element in $G_a^2(E)$ satisfies the inequality \eqref{Bounds}.
\smallskip

\paragraph{\it Step 2: Vertical elements} Consider an element $\mathbf{x} = x^{(2)} \in \so_{cc}(E)$ with $\sigma(x^{(2)} ) = (\sigma_j)$. Let $x^{(2)} = \sum_{j=1}^\infty \sigma_j X_j \wedge Y_j$ be the singular value decomposition and define $Z_j = \frac{1}{2} (X_j - Y_j)$. Consider the curve
$$u(t) = 2 \sqrt{\pi} \sum_{j=1}^\infty (j \sigma_j)^{1/2} (e^{-2\pi j t} Z_j + e^{2\pi j t} \bar{Z}_j) .$$
We see that $\|u(t)\|_E = \|u\|_{L^1} =2 \sqrt{\pi \| x^{(2)}\|_{cc}}$. Furthermore, if $F_{n}$ is the span of $X_1$, $Y_1$,  $\dots$, $X_n$, $Y_n$, then by the proof of Theorem~\ref{th:Dindep}, it follows that $\pr_{F_n} \bGamma_{u}$ is a minimizing geodesic from $0$ to $\mathbf{x}^n  := \sum_{j=1}^n \sigma_j X_j \wedge Y_j$.
Since $\pr_{F_n} u$ converges to $u$ in $L^1([0,1],E)$ and $\mathbf{x}^n$ converges to $\mathbf{x}$ in the norm $\| \, \cdot \, \|_{\mathfrak{g}(E)}$, it follows that $\bGamma_u$ is a minimizing geodesic from $0$ to $\mathbf{x}$, and in particular,
$$\rho(0, \mathbf{x}) = \Length(\bGamma_u) =2\sqrt{\pi}\| x^{(2)}\|^{1/2}_{cc}.$$
\smallskip

\paragraph{\it Step 3: The CC-distance is exactly finite on $G_{cc}^2(E)$}
For any element $\mathbf{x}  = x + x^{(2)}\in G_{cc}^2(E)$, we can construct a horizontal curve $\bGamma$ from $0$ to $\mathbf{x}$ by a concatenation of the straight line from $0$ to $x$ with a minimizing geodesic from $0$ to $x^{(2)}$ left translated by $x$. The result is that
$$\rho(0, \mathbf{x})\leq \Length(\bGamma) = \| x\| + 2\sqrt{\pi} \| x^{(2)}\|_{cc}^{1/2} \leq 3 \threeN \mathbf{x} \threeN < \infty.$$
Conversely if $\mathbf{x} \in G^2(E)$ and $\threeN \mathbf{x} \threeN = \infty$, then using \eqref{Bounds} and any sequence $\mathbf{x}^n$ in $G_a^2(E)$ converging to $\mathbf{x}$ in $\| \, \cdot \, \|_{\mathfrak{g}(E)}$, we see that $\rho(0, \mathbf{x}) = \infty$. $G^2_{cc}(E)$ is complete with the distance $\rho$ as it is complete with respect to $\threeN  \cdot  \threeN$ by definition.
\end{proof}

In order for us to complete Part~II of the proof of Theorem~\ref{th:main} and show that $(G_{cc}(E), \rho)$ is a geodesic space, we will need the following lemma.

\begin{lemma} \label{lemma:Compact}
Let $\mathbf{x} = x + x^{(2)} \in G_{cc}^2(E)$ be a fixed arbitrary element with singular value decomposition $x^{(2)} = \sum_{j=1}^\infty \sigma X_j \wedge Y_j$ as in \eqref{SVD}. Define subspaces $F_0 \subseteq F_1 \subseteq F_2\subseteq \cdots$ by
\begin{equation} \label{Fn} F_0 = \spn\{ x\}, \qquad F_{n+1} = \spn (F_n \cup\{ X_{n+1}, Y_{n+1}\}).\end{equation}
For any $n \geq 0$, define $\pr_n = \pr_{F_n}$, $\pr_{n,\perp} = \pr_{F_n^\perp}$ and $\pr_{n,\wedge} = \pr_{F_n \wedge F_n^\perp}$
\begin{enumerate}[\rm (a)]
\item The set
\begin{equation} \label{Kset}
K(\mathbf{x}) = \left\{ \mathbf{y} \in G_{cc}^2(E) \, : \, \begin{array}{c} \text{For any $n \geq 0$,} \\ \\
\rho(0, \pr_n \mathbf{y}) \leq \sqrt{2 \rho(0, \mathbf{x}) \rho(0, \pr_n \mathbf{x})}  \\ \\
\rho(0, \pr_{n,\perp} \mathbf{y}) \leq \sqrt{2 \rho(0, \mathbf{x}) \rho(0, \pr_{n,\perp} \mathbf{x})}  \\ \\
 \| \pr_{n, \wedge} \mathbf{y}\|_{\otimes}  \leq 4\sqrt{2 \rho(0, \mathbf{x})^3 \rho(0, \pr_{n,\perp} \mathbf{x})} \end{array}\right\},\end{equation}
is relatively compact in $G^2(E)$.
\item Any minimizing geodesic from $0$ to $\mathbf{x}$ is contained in $K(\mathbf{x})$.
\end{enumerate}
\end{lemma}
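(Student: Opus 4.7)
I let $\bGamma = \bGamma_u \colon [0,1] \to G^2(E)$ be a minimizing geodesic from $0$ to $\bx$, parametrized so that $\|u(t)\|_E \equiv L := \rho(0,\bx)$ for almost every $t$. I decompose $u = v + w$ with $v = \Pro_{F_n} u$ and $w = \Pro_{F_n^\perp} u$, so that $\|v(t)\|_E^2 + \|w(t)\|_E^2 = L^2$ pointwise. Since $\pr_n$ and $\pr_{n,\perp}$ are Carnot group homomorphisms, the projected curves $\pr_n \bGamma = \bGamma_v$ and $\pr_{n,\perp}\bGamma = \bGamma_w$ are horizontal, ending at $\pr_n \bx$ and $\pr_{n,\perp}\bx$ respectively. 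For the first inequality of \eqref{Kset}, Cauchy--Schwarz gives
$$ \rho(0, \pr_n \bGamma(t)) \;\leq\; \int_0^t \|v(s)\|_E\, ds \;\leq\; \sqrt{A}, \qquad A := \int_0^1 \|v(s)\|_E^2\, ds,$$
so the bound reduces to the energy estimate $A \leq 2L\rho(0, \pr_n\bx)$. This is the genuinely variational step: the minimality of $\bGamma$ forces the $F_n$-energy to be proportional to $L\cdot\rho(0,\pr_n\bx)$, because excess energy in $F_n$ could be reallocated to construct a strictly shorter horizontal competitor to $\bx$. A symmetric argument handles $\pr_{n,\perp}\bGamma$.

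\textbf{The cross term.} For the third inequality in \eqref{Kset}, starting from $\dot\gamma^{(2)} = \tfrac12 \gamma \wedge u$ and the identification \eqref{identification}, I compute
$$ \Pro_n \gamma^{(2)}(t) \Pro_{n,\perp} \;=\; \tfrac12 \int_0^t \bigl[(\Pro_{n,\perp}\gamma(s))^* \otimes v(s) - w(s)^* \otimes \Pro_n \gamma(s)\bigr]\, ds,$$
with $\Pro_{n,\perp}\gamma^{(2)}(t)\Pro_n$ handled symmetrically. Each integrand summand has rank one, and since $\|\cdot\|_\otimes$ is dominated by the projective (Schatten $1$) norm, the norm of each piece is the product of its two factor norms. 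Combined with $\|\Pro_n\gamma(s)\|_E \leq \sqrt{sA}$ and $\|\Pro_{n,\perp}\gamma(s)\|_E \leq \sqrt{sB}$ (Cauchy--Schwarz) and one more Cauchy--Schwarz in $s$, this yields $\|\pr_{n,\wedge}\bGamma(t)\|_\otimes \lesssim \sqrt{AB}$ where $B := \int_0^1 \|w(s)\|_E^2\, ds$. Substituting $A \leq L^2$ and the energy bound $B \leq 2L\rho(0, \pr_{n,\perp}\bx)$ then produces the claimed $O\!\left(\sqrt{L^3\rho(0,\pr_{n,\perp}\bx)}\right)$ estimate.

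\textbf{Compactness (part (a)).} Given a sequence $\{\mathbf{y}_k\} \subset K(\bx)$, I observe that for each fixed $n$ the set $\{\pr_n \mathbf{y}_k\}_k$ lies in the $\rho$-ball of radius $\sqrt{2L\rho(0,\pr_n\bx)}$ in the finite-dimensional Carnot group $G^2(F_n)$. By Theorem~\ref{th:Dindep} and Heine--Borel this ball is compact, so a diagonal extraction yields a subsequence along which $\pr_n\mathbf{y}_k$ converges in $G^2(F_n)$ for every $n$. The remaining two conditions in \eqref{Kset} ensure that $\rho(0, \pr_{n,\perp}\mathbf{y}_k)$ and $\|\pr_{n,\wedge}\mathbf{y}_k\|_\otimes$ tend to $0$ uniformly in $k$ as $n \to \infty$, because $\rho(0, \pr_{n,\perp}\bx) \to 0$ by the construction of $F_n$ from the singular value decomposition of $x^{(2)}$. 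Combined with the bound $d \leq \threeN \,\cdot\, \threeN$ available on each $G^2(F_n)$, this shows the extracted subsequence is Cauchy in the norm of $\mathfrak{g}(E) = E \oplus \so_\otimes(E)$, producing a limit $\mathbf{y}_\infty \in G^2(E)$.

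\textbf{Main obstacle.} The most delicate point is the energy bound $A \leq 2L\rho(0, \pr_n \bx)$ and its perpendicular counterpart, which is where minimality of $\bGamma$ must genuinely be used: naive Cauchy--Schwarz alone only gives the weaker $A \leq L^2$ from above and $A \geq \rho(0,\pr_n\bx)^2$ from below. Extracting the correct $L\cdot\rho(0,\pr_n\bx)$ scaling calls for a cut-and-paste variational argument, replacing the $F_n$-portion of $\bGamma$ by a more efficient horizontal curve to $\pr_n\bx$ and using the minimality of the total length to rule out any ``wasteful'' excess. A direct attack through the geodesic equations $\dot\gamma(t) = e^{t\Lambda}u_0$ is hampered by the absence of Hopf--Rinow in infinite dimensions, so the variational route is the cleaner path.
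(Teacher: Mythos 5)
There is a genuine gap at exactly the point you flag as the ``main obstacle.'' Your reduction of the first two inequalities of \eqref{Kset} to the energy bound $A := \int_0^1\|v\|^2\,dt \leq 2L\rho(0,\pr_n\bx)$ (with $L = \rho(0,\bx)$ and the constant-speed normalization $\|u(t)\|_E\equiv L$) is correct, and your treatment of the cross term and the diagonal-extraction argument for compactness in (a) both go through once that bound is available. But you do not actually prove the energy bound; you only assert it and sketch a ``cut-and-paste'' variational route. That route is problematic: replacing the $F_n$-portion of the control changes $\gamma$ itself, and hence changes the coupling term $\tfrac12\gamma\wedge u$ that produces $x^{(2)}$, so you cannot modify $v$ while holding the endpoint fixed without a substantial additional argument. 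No such competitor construction is carried out.

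The paper closes this estimate by an entirely different and much more elementary mechanism that requires no perturbation of the geodesic at all. Since $x\in F_n$ and the singular value decomposition of $x^{(2)}$ is adapted to the $F_n$-filtration, one has $\pr_{n,\wedge}\bx = 0$ and hence $\bx = (\pr_n\bx)\cdot(\pr_{n,\perp}\bx)$ (the second factor being central), which gives the a priori inequality $\rho(0,\bx)\leq\rho(0,\pr_n\bx)+\rho(0,\pr_{n,\perp}\bx)$. Combined with the projection bound $\rho(0,\pr_{n,\perp}\bx)\leq\int_0^1\|u_{n,\perp}(t)\|\,dt$ and the pointwise scalar identity $L-\|u_{n,\perp}(t)\| = \|u_n(t)\|^2/(L+\|u_{n,\perp}(t)\|) \geq \|u_n(t)\|^2/(2L)$, one integrates to get $L\rho(0,\pr_n\bx)\geq \tfrac12\int_0^1\|u_n\|^2\,dt$, and then Jensen gives $\Length(\pr_n\bGamma)\leq\sqrt{2L\rho(0,\pr_n\bx)}$. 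This is exactly the bound you needed, obtained by pure algebra on the group together with the constant-speed parametrization, with no appeal to a Hopf--Rinow-type theorem or to a variational competitor. Your proposal would need to be supplemented by this (or some other completed) argument to constitute a proof.
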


\begin{proof}
To simplify notation in the proof, we write $\rho(\mathbf{x}) := \rho(0,\mathbf{x})$.
\begin{enumerate}[\rm (a)]
\item Define $F_\infty = \spn \{ x, X_1, Y_1, X_2, Y_2, \dots \}$. From the definition of $K(\bf x)$ it follows that $\pr_{F_\infty} \mathbf{y} = 0$ for any $\mathbf{y} \in K(\bx)$. Considering the limit of $\pr_n$, we also have that
$$\|\mathbf{y}\|_{\mathfrak{g}(E)} \leq \rho(\mathbf{y}) \leq \sqrt{2} \rho( \mathbf{x})$$
so $K(\mathbf{x})$ is bounded in both $G^2_{cc}(E)$ and $G^2(E)$. Recall (e.g.\ from \cite[Theorem 4.3.29]{Eng77}) that for a complete metric space, a set is relatively compact if and only if it is totally bounded, i.e. for every $\ve >0$ there is a finite set of balls of radius $\ve >0$ covering the set. Let $B(\mathbf{z}, r)$ be the ball of radius $r$ centered at $\mathbf{z} \in \mathfrak{g}(E)$ with respect to the $\| \cdot \|_{\mathfrak{g}(E)}$-norm. We observe that for any $\mathbf{y} \in K(\mathbf{x})$, we have
\begin{align*}
\| \pr_n \mathbf{y}\|_{\mathfrak{g}(E)} \leq \rho( \pr_n \mathbf{y}) &\leq \sqrt{2} \rho( \mathbf{x}),\\
\|\pr_{n,\perp} \mathbf{y}\|_{\mathfrak{g}(E)} \leq \rho(\pr_{n,\perp} \mathbf{y}) & \leq \sqrt{2 \rho(\mathbf{x}) \rho( \pr_{n,\perp} \, \mathbf{x})} ,\\
\|\pr_{n,\wedge} \mathbf{y}\|_{\mathfrak{g}(E)} = \frac{1}{2} \| \pr_{n, \wedge} \mathbf{y}_3\|_{\otimes} &\leq 4\sqrt{2 \rho(\mathbf{x})^3 \rho(\pr_{n,\perp} \, \mathbf{x})} .
\end{align*}
Hence, for a given $\ve >0$, we can choose $n$ sufficiently large such that
$$\max \left\{\sqrt{2 \rho(\mathbf{x}) \rho( \pr_{n,\perp} \, \mathbf{x})}, 2\sqrt{2 \rho(\mathbf{x})^3 \rho(\pr_{n,\perp} \, \mathbf{x})} \right\} \leq \frac{\ve}{3}.$$
Choose a finite set of points $\mathbf{z}_1, \dots, \mathbf{z}_N$ such that $\cup_{j=1}^N B(\mathbf{z}_j, \frac{\ve}{3})$ covers the relatively compact set $F_{n} \cap B(0,\rho(\mathbf{x}))$. By our choice of $n$, we then have that $\cup_{j=1}^N B(\mathbf{z}_j, \ve)$ covers all of $K(\bx)$.
\item
We observe first that since $\pr_{n,\perp} \bx$ is in the center of $G^2(E)$, then by left invariance
\begin{align*}
\rho(\bx) &= \rho((\pr_{n} \bx) \cdot (\pr_{n,\perp} \bx)) \leq \rho( \pr_{n} \bx ) + \rho( \pr_{n,\perp} \bx)
\end{align*}
Let $\bGamma = \bGamma_{u} = \gamma + \gamma^{(2)}\colon [0,1] \to G^2(E)$ be any minimizing geodesic with left logarithmic derivative $u$ and write $u_n = \pr_n u$ and $u_{\perp,n} = \pr_{n,\perp} u$. Since $u$ is a minimizing geodesic, then by reparametrization, we may assume that 
\begin{align}
\|u(t)\|_E &= \sqrt{\| u_n (t)\|^2_E +\| u_{n,\perp} (t)\|^2_E} = \rho( \mathbf{x}) , \qquad \text{and note } \label{eq:projgeod1}\\
\rho(\pr_{n,\perp} \, \mathbf{x})& \leq \Length(\pr_{n,\perp} \, \bGamma) = \int_0^1 \| u_{n,\perp}(t)\|_E \, dt \leq \rho(\mathbf{x}). \label{eq:projgeod2}
\end{align}
This leads to the following sequence of inequalities
\begin{align*}
& \rho( \mathbf{x}) \rho( \pr_n \mathbf{x}) \geq \rho(\mathbf{x}) \left(\rho(\mathbf{x}) - \rho(\pr_{n,\perp} \mathbf{x}) \right) \\
&\hspace{-.45cm} \stackrel{\eqref{eq:projgeod1} + \eqref{eq:projgeod2}}{\geq} \rho(\mathbf{x}) \int_0^1\left(  \sqrt{\| u_n (t)\|^2_E +\| u_{n,\perp} (t)\|^2_E} - \| u_{n,\perp}(t)  \|_E \right)\, dt \\
& \stackrel{\eqref{eq:projgeod1}}{=} \rho(\mathbf{x}) \int_0^1\left(  \frac{\| u_n(t)\|^2_E}{\sqrt{\| u_n (t)\|^2_E +\| u_{n,\perp} (t)\|^2_E} + \| u_{n,\perp}(t) \|_E} \right)\, dt \\
& \stackrel{\eqref{eq:projgeod2}}{\geq} \frac{1}{2}  \int_0^1\| u_n(t)\|^2_E \, dt \stackrel{\text{Jensen}}{\geq} \frac{1}{2} \left(\int_0^1\| u_n(t)\|_E \, dt \right)^2 = \frac{1}{2} \Length(\pr_n \bGamma)^2.
\end{align*}
It follows that any point $\mathbf{y}$ on the curve $\bGamma$ will have $\rho(\pr_n \mathbf{y}) \leq \sqrt{2\rho(\mathbf{x}) \rho( \pr_n \mathbf{x})}$. By a similar calculation, we have that $\rho(\pr_{n,\perp} \mathbf{y}) \leq \sqrt{2\rho(\mathbf{x}) \rho( \pr_{n,\perp} \mathbf{x})}$.

We also see that $\pr_{n,\wedge} \bGamma(t) =  \pr_{n,\wedge} \gamma^{(2)}(t)$ with
$$\pr_{n,\wedge} \gamma^{(2)}(t) = \frac{1}{2} \int_{0}^t ( (\pr_n \gamma (s)) \wedge u_{n,\perp}(s) +   (\pr_{n,\perp} \gamma(s)) \wedge u_n(s)) ds.$$
We note that $\int_0^t \| u_n(s)\| \, dt \leq \Length(\pr_n \bGamma)$, while $\| \pr_n  \gamma(t) \| \leq \rho(\pr_n \gamma(t)) \leq \Length(\pr_n \bGamma)$. Since we have similar relation applying $\pr_n$, 
We finally use that 
\begin{align*}
& \| \pr_{\wedge} \gamma^{(2)}(t)\|_{\otimes} \leq \Length(\pr_n \bGamma(t)) \Length(\pr_{n,\perp} \bGamma(t)) 
\leq \sqrt{2\rho(\mathbf{x})^3 \rho(\pr_{n, \perp} \mathbf{x})} .
\end{align*}
Hence the geodesic satisfies the pointwise bounds from the definition of $K(\mathbf{x})$ and the result follows. \qedhere
\end{enumerate}
\end{proof}

\begin{lemma} \label{lemma:closed}
For any $r > 0$ and $\bx_0 \in G^2(E)$, we have that the set
$$\bar{B}_\rho(\bx_0, r) = \{ \bx \, : \, \rho(\bx_0, \bx) \leq r \}$$
is closed in $G^2(E)$.
\end{lemma}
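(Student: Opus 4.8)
The plan is to reduce first to $\bx_0 = 0$: since $\rho$ is left-invariant and left translation by $\bx_0$ is a homeomorphism of $G^2(E)$ (the group operations are continuous for the norm $\|\cdot\|_{\mathfrak{g}(E)}$), it suffices to show that $\bar B_\rho(0,r) = \{\bx\in G^2(E) : \rho(0,\bx)\le r\}$ is closed. I would deduce this from lower semicontinuity of $\bx\mapsto\rho(0,\bx)$ on $G^2(E)$, which I would in turn extract from the identity
$$\rho(0,\bx) = \sup\bigl\{\rho(0,\pr_F\bx)\ :\ F\subseteq E\ \text{a finite-dimensional subspace}\bigr\}\in[0,\infty],$$
valid for every $\bx\in G^2(E)$. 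Granting this, $\bar B_\rho(0,r) = \bigcap_F\{\bx : \rho(0,\pr_F\bx)\le r\}$ is an intersection of sets which will be shown to be closed, and the lemma follows.

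For the closedness of each $\{\bx : \rho(0,\pr_F\bx)\le r\}$ with $F$ finite-dimensional, I would check that $\bx\mapsto\rho(0,\pr_F\bx)$ is continuous. The map $\pr_F\colon G^2(E)\to G^2(F)$, $x+x^{(2)}\mapsto\Pro_F x + \Pro_F x^{(2)}\Pro_F$, is continuous because $\|\Pro_F x\|_E\le\|x\|_E$ while on the finite-dimensional group $G^2(F)$ any norm controls $\Pro_F x^{(2)}\Pro_F$ by $\|x^{(2)}\|_{\Sch^\infty}\le\|x^{(2)}\|_\otimes$. On $G^2(F)$ the Carnot--Carath\'eodory distance $\rho_F$ coincides with the restriction of $\rho$ by the projection lemma of Section~\ref{sec:Projections}, and by Theorem~\ref{th:Dindep} it is sandwiched between $\threeN\cdot\threeN$ and $3\threeN\cdot\threeN$; since $\threeN\cdot\threeN$ is continuous on the finite-dimensional group $G^2(F)$ and vanishes at the identity, left-invariance gives continuity of $\mathbf y\mapsto\rho_F(0,\mathbf y)$, hence of $\bx\mapsto\rho(0,\pr_F\bx)$.

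For the identity, the bound $\rho(0,\pr_F\bx)\le\rho(0,\bx)$ is again the projection lemma of Section~\ref{sec:Projections}, so the content is the reverse. Fix $\bx = x+x^{(2)}$ with singular value decomposition $x^{(2)} = \sum_j\sigma_j X_j\wedge Y_j$ as in \eqref{SVD} and set $F_n = \spn\{x, X_1, Y_1, \dots, X_n, Y_n\}$. Using $\Pro_{F_n}x = x$ and the product formula \eqref{product}, one checks that $(\pr_{F_n}\bx)^{-1}\cdot\bx$ is the purely vertical element $x^{(2)} - \Pro_{F_n}x^{(2)}\Pro_{F_n} = R_n - \Pro_{F_n}R_n\Pro_{F_n}$ with $R_n := \sum_{j>n}\sigma_j X_j\wedge Y_j$. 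Since singular values do not increase under compression $A\mapsto\Pro_{F_n}A\Pro_{F_n}$ (the estimate in the proof of Lemma~\ref{lemma:ProjSch}(a)) and $\|\cdot\|_{cc}$ is a quasi-norm, $\|x^{(2)} - \Pro_{F_n}x^{(2)}\Pro_{F_n}\|_{cc}\le 4\|R_n\|_{cc}$, and $\|R_n\|_{cc} = \sum_{l\ge1} l\,\sigma_{n+l}\le\sum_{j>n}j\sigma_j$. If $\bx\in G^2_{cc}(E)$ this last tail tends to $0$, so by Step~2 of Part~I of Theorem~\ref{th:main} (the CC-distance to a vertical $\mathbb{X}$ equals $2\sqrt{\pi}\,\|\mathbb{X}\|_{cc}^{1/2}$) we get $\rho(\pr_{F_n}\bx,\bx) = \rho\bigl(0,(\pr_{F_n}\bx)^{-1}\bx\bigr)\to 0$, hence $\rho(0,\pr_{F_n}\bx)\to\rho(0,\bx)$ by the triangle inequality, giving the identity. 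If $\bx\notin G^2_{cc}(E)$, so $\|x^{(2)}\|_{cc} = \infty$ by Part~I of Theorem~\ref{th:main}, the compression estimate applied in the other direction gives $\|\Pro_{F_n}x^{(2)}\Pro_{F_n}\|_{cc}\ge\sum_{j\le n}j\sigma_j\to\infty$, so $\rho(0,\pr_{F_n}\bx)\ge\threeN\pr_{F_n}\bx\threeN\to\infty$ (using \eqref{Bounds} for the finite-rank element $\pr_{F_n}\bx$), and the supremum is $+\infty = \rho(0,\bx)$ as well.

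The main obstacle is precisely that $\rho$ is genuinely coarser than the ambient metric $d$ on $G^2(E)$ in infinite dimensions, so $\rho$ is not continuous and closedness of $\rho$-balls is not automatic; the whole point is to recover enough lower semicontinuity through finite-dimensional compressions, and the technical heart is the tail bound $\|x^{(2)} - \Pro_{F_n}x^{(2)}\Pro_{F_n}\|_{cc}\to 0$ together with the exact value of the CC-distance to the vertical subspace. (An alternative would be to prove lower semicontinuity directly, taking almost-minimizing constant-speed horizontal curves $\bGamma_{u^k}$ associated to a convergent sequence $\bx^k\to\bx$ and extracting a weak $L^2$-limit of the controls $u^k$; this works but needs separability and a weak-continuity argument for $u\mapsto\bGamma_u(1)$, so the projection route seems cleaner.)
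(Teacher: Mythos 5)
Your proposal is correct, and it is built from the same core ingredients as the paper's proof (SVD-adapted finite-dimensional subspaces $F_n$, the fact that $\pr_F$ does not increase $\rho$, the $\|\cdot\|_{cc}$-tail estimate combined with \eqref{Bounds}, and comparability of $\rho$ with the homogeneous norm in finite dimensions), but you organize them differently. The paper argues directly on a convergent sequence $\mathbf{y}^n \to \mathbf{y}$ in $\bar{B}_\rho(0,r)$: it first shows $\threeN \mathbf{y} \threeN < \infty$ using that $\|\cdot\|_\otimes$-convergence forces convergence of singular values, then approximates $\mathbf{y}$ by $\pr_{F_m}\mathbf{y}$ in $\rho$, and concludes by interchanging limits, $\rho(0,\mathbf{y}) = \lim_m \lim_n \rho(0,\pr_{F_m}\mathbf{y}^n) \leq r$. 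You instead prove the reusable identity $\rho(0,\bx) = \sup_F \rho(0,\pr_F\bx)$ over finite-dimensional $F$, i.e.\ lower semicontinuity of $\rho(0,\cdot)$ on all of $G^2(E)$, and then write the ball as an intersection of closed sets; this buys you a cleaner statement, valid also at limit points with $\|x^{(2)}\|_{cc} = \infty$, and avoids the singular-value-convergence step, at the cost of having to treat the infinite case of the sup identity separately, while the paper's sequential argument is a bit shorter for the lemma as stated. One small point worth a line of justification in your write-up: the inequality $\|\Pro_{F_n} x^{(2)} \Pro_{F_n}\|_{cc} \geq \sum_{j\leq n} j\sigma_j$ follows because, with $G_n = \spn\{X_j,Y_j : j \leq n\} \subseteq F_n$, one has $\Pro_{G_n}\bigl(\Pro_{F_n} x^{(2)} \Pro_{F_n}\bigr)\Pro_{G_n} = \sum_{j\leq n}\sigma_j X_j\wedge Y_j$ (the range and support of the tail $R_n$ being orthogonal to $G_n$), so the claim is again an instance of the compression monotonicity of singular values from the proof of Lemma~\ref{lemma:ProjSch}; with that spelled out, the argument is complete.
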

\begin{proof}
By left invariance, we only consider $\mathbf{x}_0=0$. Assume that $\mathbf{y}^n= y^n + y^{n,(2)}$ is a sequence contained in $\bar{B}_\rho(0, r)$ converging in $G^2(E)$ to some element $\mathbf{y} = y + y^{(2)}$. We then have that
$$\| y - y^n \|_E \to 0, \qquad \| y^{(2)} - y^{n,(2)} \|_{\otimes} \to 0.$$
In particular, we will have $\| y^{(2)} - y^{n,(2)} \|_{\Sch^\infty} \to 0$ implying that if $\sigma(y^{(2)}) = (\sigma_j)$ and $\sigma(y^{n,(2)}) = (\sigma_j^n)$, then $\sigma_{j}^n \to \sigma_j$. It follows that
\begin{align*}
\threeN \mathbf{y} \threeN \leq \|y \|_E + \sqrt{\pi} \sum_{j=1}^\infty j \sigma_j = \lim_{n\to \infty} \| y^n\|_E + \sqrt{\pi} \lim_{m \to \infty} \left(\lim_{n \to \infty} \sum_{j=1}^m j \sigma_j^n \right) \leq 2r.\end{align*}
Since $\threeN \mathbf{y} \threeN < \infty$, we can conclude the following.

If $y^{(2)} = \sum_{j=1}^\infty \sigma_j X_j \wedge Y_j$ is defined with all vectors orthonormal, we define $F_m = \spn \{ y, X_1, Y_1, \dots, X_m, Y_m\}$. Then
$$\lim_{m\to \infty} \rho( \pr_{F_m} \mathbf{y}, \mathbf{y}) \leq 3 \lim_{m\to \infty} \threeN \mathbf{y} - \pr_{F_m} \mathbf{y} \threeN =0.$$
Using that $\rho(0, \pr_{F_m} \mathbf{y}) \leq \rho(0, \mathbf{y}) \leq \rho(0, \pr_{F_m} \mathbf{y}) + \rho( \pr_{F_m} \mathbf{y}, \mathbf{y})$, it follows that
$$\lim_{m \to \infty} \rho(0, \pr_{F_m} \mathbf{y}) = \rho(0, \mathbf{y}).$$
Furthermore, since
$$\| \pr_{F_m} (\mathbf{y} - \mathbf{y}^n) \|_{\Sch^\infty} \leq \| \mathbf{y} - \mathbf{y}^n \|_{\Sch^\infty} \leq \| \mathbf{y} - \mathbf{y}^n \|_{\otimes},$$
we have that $\lim_{n \to \infty}\| \pr_{F_m} (\mathbf{y} - \mathbf{y}^n) \|_{\Sch^\infty} = 0$. Since all left invariant homogeneous norms are equivalent on a finite dimensional space,
we have convergence $\lim_{n\to \infty} \rho( \pr_{F_m} \mathbf{y}^n , \pr_{F_m} \mathbf{y} ) \to 0$ for any fixed $m$. Finally
$$\rho(0, \mathbf{y} ) = \lim_{m\to \infty} \rho(0, \pr_{F_m} \mathbf{y}) = \lim_{m\to \infty} \lim_{n\to \infty}\rho(0, \pr_{F_m} \mathbf{y}^n) \leq r,$$
so $\mathbf{y} \in \bar{B}_\rho(0, r)$.
\end{proof}

\begin{proof}[Proof of Theorem~\ref{th:main}, Part II] We are now ready to complete the proof.

\

\paragraph{\it Step 5: Every point has a midpoint}
Let $\mathbf{x} = x + x^{(2)} = x + \sum_{j=1}^\infty \sigma_j X_j \wedge Y_j \in G^2_{cc}(E)$ be arbitrary and define $F_n$ as in \eqref{Fn}. 
If we write $\mathbf{x}^n = \pr_{F_n} \mathbf{x}$, then by the definition in \eqref{Kset}, we have that $K(\bx^n) \subseteq K(\bx)$. Since $\mathbf{x}^n \in G_a^2(E)$, there exists a length minimizing geodesic $\bGamma^n$ from $0$ to $\mathbf{x}^n$, which we know is in $K(\bx)$ by Lemma~\ref{lemma:Compact}.

Let $\mathbf{s}^n$ denote the midpoint of each geodesic $\bGamma^n$. This satisfies
$$\rho(0, \mathbf{s}^n) = \rho(\mathbf{x}^n, \mathbf{s}^n) = \frac{1}{2} \rho(0, \mathbf{x}^n) \leq \frac{1}{2} \rho(0, \mathbf{x})  := r. $$
Write $\delta_m = \rho(\mathbf{x}^m, \mathbf{x})$, and define balls
\begin{align*}
\bar{B}_0 & = \{ \mathbf{y} \in G^2(E) \, : \,  \rho (0, \mathbf{y}) \leq r\}, \\
\bar{B}_{m} & = \{ \mathbf{y} \in G^2(E) \, : \, \rho (\mathbf{x}, \mathbf{y}) \leq r+ \delta_m \}.
\end{align*}
By the definition of $\mathbf{x}^n$, we have $\mathbf{s}^n \in \bar{B}_0 \cap \bar{B}_{m}$ for any $n \geq m$ with $\delta_m \to 0$.

Since every $\mathbf{s}^m$ is contained in $K(\mathbf{x})$, by compactness, there is a subsequence $\mathbf{s}_{n_k}$ converging to a point $\mathbf{s}$ in $G^2(E)$. This element hence has to be contained in $\bar{B}_{0} \cap \bar{B}_{m}$ for any $m \geq 1$ by Lemma~\ref{lemma:closed}. It follows that
$$\rho(0, \mathbf{s}) = \rho(\mathbf{x}, \mathbf{s}) = \frac{1}{2} \rho(0, \mathbf{x}),$$
i.e., $\mathbf{s}$ is a midpoint of $\mathbf{x}$. Since $(G_{cc}^2(E), \rho)$ is a complete length space, it follows from left-invariance of the metric together with \cite[Theorem 2.4.16]{BaBaI01} that existence of such midpoint for any element is equivalent to the space being a geodesic space. This completes the proof.
\end{proof}

\subsection{Proof of Theorem~\ref{th:mainHilbert}} \label{sec:ProofHilbert}
We now come to the proof of our main result. Namely, if $E$ is a Hilbert space and we define $\alpha$-weak geometric rough path relative to the tensor norm $\|\cdot\|_{\Sch^p}$, $1 \leq p \leq \infty$ on the tensor product, then for $\beta \in (1/3, \alpha)$
$$
\mathscr{C}_g^{\alpha}([0,T], E) \subset \mathscr{C}^{\alpha}_{wg}([0,T],E)  \subset \mathscr{C}_g^{\beta}([0,T], E)  .
$$
We can prove this by showing that the conditions (I), (II) and (III) in Theorem~\ref{thm:geometric rough paths} are satisfied. By Theorem~\ref{th:main} it follows that (I) and (II) are satisfied for Hilbert spaces. Hence, we only need to prove that condition (III) holds.

Recall the results of Lemma~\ref{lemma:ProjSch}. Let $\bx = x+ x^{(2)} \in C^\alpha([0,T], G^2(E))$ be an arbitrary weakly geometric $\alpha$-rough path. For any fixed $t$, define a sequence of increasing finite subspaces $\{ F_{t, n}\}_{n=1}^\infty$, such that
$$x_t \in F_{t,n} \qquad d(\bx_t, \pr_{F_{t,n} }\bx_t) = d(0, \pr_{F_{t,n}^\perp}  x^{(2)})\leq \frac{1}{n}.$$
Consider a partition $\Pi = \{ t_0 = 0 < t_1 < t_2 < \cdots < t_k = T \}$ of the interval $[0, T]$. Write
$$F_{\Pi,n} = \spn \{ F_{t,n} \, : \, t \in \Pi \}.$$
Define $\bx_t^{\Pi,n} = \pr_{F_{\Pi,n}} \bx_t$. Since $\rho$ and $d$ are equivalent on the finite dimensional $F_{\Pi,n}$, it follows that $\bx_t^{\Pi,n}$ is a continuous function in $G^2_{cc}(E)$ with respect to~$\rho$.

Since $\bx_t$ is uniformly continuous, we can for each $r > 0$ find a number $o_r$ such that
$$o_{r} = \mathrm{Osc}(\bx_t; r) = \sup_{\begin{subarray}{c} 0\leq s < t \leq T \\ t-s \leq r \end{subarray}} d(\bx_s, \bx_t),$$
with $o_r$ approaching $0$ as $r \to 0$. We now see that for every $t \in [t_i, t_{i+1}]$,
\begin{align*}
& d(\bx_t^{\Pi,n}, \bx_t) \leq d(\bx_t^{\Pi,n}, \bx_{t_i}^{\Pi,n}) + d(\bx_{t_i}^{\Pi,n}, \bx_{t_i}) + d(\bx_{t_i}, \bx_t) \\
& \leq 2 d(\bx_t, \bx_{t_i}) + d(0, \pr_{F_{\Pi,n}^\perp} x^{(2)}_{t_i}) \\
& \leq 2 d(\bx_t, \bx_{t_i}) + d(0, \pr_{F_{t_i,n}^\perp} x^{(2)}_{t_i}) \leq 2 o_{|\Pi|} + \frac{1}{n}
\end{align*}
Defining $\bx_t^n = \bx_t^{n, \Pi_n}$ where $\Pi_n$ is a partition with $|\Pi_n| = \frac{1}{n}$, we have that $d(\bx_t^n, \bx_t)$ converges uniformly to $0$.
Using now the $\alpha$-H\"older property of $\mathbf{x}^n$ and $\mathbf{x}$ and an interpolation argument as in the proof of Theorem~\ref{thm:geometric rough paths}, we obtain that $d_\beta(\bx, \bx^n) \to 0$ for any $\beta \in (\frac{1}{3}, \alpha)$. This completes the proof.

\begin{remark}[Other cross-norms]
As one can see from the proof in Section~\ref{sec:ProofHilbert}, what is needed for our result is the properties of Lemma~\ref{lemma:ProjSch} and Theorem~\ref{th:main}. Hence, for any norm on the tensor product which satisfy these two results, the result in Theorem~\ref{th:mainHilbert} holds.
\end{remark}

\subsection{Generalizing the result to Banach spaces}\label{subsect: Banach}
One of the central tools in our proof for geometric rough paths when $E$ is a Hilbert space, is that we can use orthogonal projections $\Pr_F : E \to F$, which all shorten lengths and hence have norm $1$. Such contractive projections are in general rare in Banach spaces as we have the following characterisation from \cite[Theorem 3.1]{Rand01}.
\begin{theorem}
For a Banach space $E$ with $\text{dim } E \geq 3$, the following statements are equivalent:
\begin{enumerate}[\rm (i)]
 \item $E$ is isometrically isomorphic to a Hilbert space,
 \item every $2$-dimensional subspace of $E$ is the range of a projection of norm $1$,
 \item every subspace of $E$ is the range of a projection of norm $1$.
\end{enumerate}
\end{theorem}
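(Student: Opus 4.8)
The plan is to recognise this as a classical characterisation of inner product spaces (going back to Kakutani) together with its routine reformulations, so that almost all of the difficulty sits in one implication. First I would dispose of the two cheap directions. For (i)$\Rightarrow$(iii): in a Hilbert space $E$ every closed subspace $F$ carries the orthogonal projection $P_F$, and the Pythagorean identity $\|x\|^2=\|P_Fx\|^2+\|x-P_Fx\|^2$ shows $\|P_F\|\le 1$, with equality whenever $F\neq\{0\}$ (and the identity map serves for $F=E$); hence every subspace is the range of a norm-one projection. The implication (iii)$\Rightarrow$(ii) is trivial, since $2$-dimensional subspaces are subspaces. So the whole content is (ii)$\Rightarrow$(i).

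For (ii)$\Rightarrow$(i) I would route through James's theorem: a real normed space of dimension at least $3$ in which Birkhoff--James orthogonality (``$x\perp y$'' meaning $\|x+ty\|\ge\|x\|$ for all scalars $t$) is a symmetric relation is already an inner product space, hence---being complete---a Hilbert space. The bridge I would build is the elementary fact that for nonzero $x,y$ spanning a plane $V$ one has $x\perp y$ if and only if the unique rank-one idempotent on $V$ with range $\mathbb{R}x$ and kernel $\mathbb{R}y$ has norm one. Given $x\perp y$ with $\|x\|=\|y\|=1$, one picks a norm-one supporting functional $f$ at $x$ with $f(y)=0$, so that $y$ lies in the hyperplane $\ker f$; then, using the norm-one projections supplied by (ii) onto $\mathrm{span}\{x,y\}$ and onto neighbouring planes to transport such extremal functionals, one manufactures a norm-one idempotent on $\mathrm{span}\{x,y\}$ with range $\mathbb{R}y$ and kernel $\mathbb{R}x$, i.e.\ $y\perp x$. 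Symmetry of $\perp$ on all of $E$ then yields (i) via James's theorem.

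The step I expect to be the real obstacle is exactly that bridge---turning ``every $2$-dimensional subspace is $1$-complemented'' into symmetry of Birkhoff--James orthogonality---because it is genuinely a \emph{global} use of the hypothesis: the assumption $\dim E\ge 3$ cannot be dropped (in the plane the regular-hexagon norm already has symmetric Birkhoff--James orthogonality without being Euclidean), so the argument must play different $2$-dimensional subspaces off against one another rather than work inside a single one. For the present paper it suffices to quote the clean treatment of \cite{Rand01}, which in addition covers the complex scalar field and the low-dimensional exceptional cases; the sketch above would only be reproduced as motivation.
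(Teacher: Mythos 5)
The first thing to note is that the paper does not prove this statement at all: it is quoted verbatim from \cite[Theorem 3.1]{Rand01} (it is Kakutani's classical characterisation of inner product spaces), so your closing move --- ``it suffices to quote the clean treatment of \cite{Rand01}'' --- is exactly what the paper does, and to that extent your proposal matches the paper. Your treatment of the cheap directions is fine: (i)$\Rightarrow$(iii) via the orthogonal projection and the Pythagorean identity (with ``subspace'' understood as \emph{closed} subspace, since the range of a bounded idempotent is always closed), and (iii)$\Rightarrow$(ii) is indeed trivial.

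However, viewed as a standalone argument, your sketch of (ii)$\Rightarrow$(i) has a genuine gap precisely at the step you yourself flag. The correct and easy half of your bridge is that a norm-one projection orthogonalises its range against its kernel in the Birkhoff--James sense: if $P$ has norm $1$, then $\|x+ty\|\ge\|P(x+ty)\|=\|x\|$ for $x$ in the range and $y$ in the kernel. But hypothesis (ii) only supplies norm-one projections \emph{of $E$ onto} two-dimensional subspaces; it does not supply rank-one norm-one idempotents \emph{inside} the plane $\mathrm{span}\{x,y\}$ with prescribed range $\mathbb{R}y$ and kernel $\mathbb{R}x$, and the passage from the former to the latter --- your ``transporting extremal functionals via neighbouring planes'' --- is asserted, not argued. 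That passage is essentially the whole content of Kakutani's theorem; the classical proofs do not obtain it by a soft functional-transport argument but reduce to $\dim E=3$ and invoke either Blaschke's characterisation of ellipsoids via planar shadow boundaries or a comparably substantial geometric lemma, after which one can (as you propose) finish with James's theorem on symmetry of Birkhoff--James orthogonality in dimension $\ge 3$. So the architecture you describe is reasonable and the reduction to the symmetry statement is correctly identified, but the decisive implication is left unproved; as used in the paper this is harmless, since the paper relies on the citation rather than on a proof.
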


\bibliographystyle{new}

\bibliography{SRBanach_lit}

\begin{thebibliography}{DGHT19}
\providecommand{\url}[1]{\texttt{#1}}
\providecommand{\urlprefix}{URL }
\expandafter\ifx\csname urlstyle\endcsname\relax
  \providecommand{\doi}[1]{doi:\discretionary{}{}{}#1}\else
  \providecommand{\doi}{doi:\discretionary{}{}{}\begingroup
  \urlstyle{rm}\Url}\fi
\providecommand{\eprint}[2][]{\url{#2}}

\bibitem[ABB20]{ABB20}
Agrachev, A., Barilari, D. and Boscain, U.
\newblock \emph{A comprehensive introduction to sub-{R}iemannian geometry},
  \emph{Cambridge Studies in Advanced Mathematics}, vol. 181 (Cambridge
  University Press, Cambridge, 2020).
\newblock From the Hamiltonian viewpoint, With an appendix by Igor Zelenko

\bibitem[Bai14]{Bai14}
Bailleul, I.
\newblock \emph{Flows driven by {B}anach space-valued rough paths}.
\newblock In \emph{S\'{e}minaire de {P}robabilit\'{e}s {XLVI}}, \emph{Lecture
  Notes in Math.}, vol. 2123, pp. 195--205 (Springer, Cham, 2014).
\newblock \doi{10.1007/978-3-319-11970-0_7}.
\newblock \urlprefix\url{https://doi.org/10.1007/978-3-319-11970-0_7}

\bibitem[BBI01]{BaBaI01}
Burago, D., Burago, Y. and Ivanov, S.
\newblock \emph{A course in metric geometry}, \emph{Graduate Studies in
  Mathematics}, vol.~33 (American Mathematical Society, Providence, RI, 2001).
\newblock \doi{10.1090/gsm/033}.
\newblock \urlprefix\url{https://doi.org/10.1090/gsm/033}

\bibitem[BDS16]{BaDaS}
Bogfjellmo, G., Dahmen, R. and Schmeding, A.
\newblock \emph{Character groups of {H}opf algebras as infinite-dimensional
  {L}ie groups}.
\newblock Ann. Inst. Fourier (Grenoble) \textbf{66} (2016)(5):2101--2155.
\newblock \urlprefix\url{http://aif.cedram.org/item?id=AIF_2016__66_5_2101_0}

\bibitem[BG17]{BaGu15}
Bailleul, I. and Gubinelli, M.
\newblock \emph{{Unbounded rough drivers}}.
\newblock {Annales de la Facult{\'e} des Sciences de Toulouse.
  Math{\'e}matiques.} \textbf{26} (2017)(4):795--830.
\newblock \doi{10.5802/afst.1553}.
\newblock \urlprefix\url{https://hal.archives-ouvertes.fr/hal-01278756}

\bibitem[BGLY15]{BaGaLaY}
Boedihardjo, H., Geng, X., Lyons, T. and Yang, D.
\newblock \emph{Note on the signatures of rough paths in a banach space} 2015.
\newblock \urlprefix\url{https://arxiv.org/pdf/1510.04172.pdf}

\bibitem[BR19]{BaR19}
Bailleul, I. and Riedel, S.
\newblock \emph{Rough flows}.
\newblock J. Math. Soc. Japan \textbf{71} (2019)(3):915--978.
\newblock \doi{10.2969/jmsj/80108010}.
\newblock \urlprefix\url{https://doi.org/10.2969/jmsj/80108010}

\bibitem[CDLL16]{CDLL16}
Cass, T., Driver, B.~K., Lim, N. and Litterer, C.
\newblock \emph{On the integration of weakly geometric rough paths}.
\newblock J. Math. Soc. Japan \textbf{68} (2016)(4):1505--1524.
\newblock \doi{10.2969/jmsj/06841505}.
\newblock \urlprefix\url{https://doi.org/10.2969/jmsj/06841505}

\bibitem[CHLN20]{CHLN}
Crisan, D., Holm, D.~D., Leahy, J.-M. and Nilssen, T.
\newblock \emph{Variational principles for fluid dynamics on rough paths} 2020.
\newblock \eprint{2004.07829}

\bibitem[Cho39]{Cho39}
Chow, W.-L.
\newblock \emph{\"{U}ber {S}ysteme von linearen partiellen
  {D}ifferentialgleichungen erster {O}rdnung}.
\newblock Math. Ann. \textbf{117} (1939):98--105

\bibitem[CN19]{CaN19}
Coghi, M. and Nilssen, T.
\newblock \emph{Rough nonlocal diffusions} 2019.
\newblock \urlprefix\url{https://arxiv.org/abs/1905.07270}

\bibitem[Der10]{Dereich10}
Dereich, S.
\newblock \emph{Rough paths analysis of general {B}anach space-valued {W}iener
  processes}.
\newblock J. Funct. Anal. \textbf{258} (2010)(9):2910--2936.
\newblock \doi{10.1016/j.jfa.2010.01.018}

\bibitem[DF93]{DaF93}
Defant, A. and Floret, K.
\newblock \emph{Tensor norms and operator ideals}, \emph{North-Holland
  Mathematics Studies}, vol. 176 (North-Holland Publishing Co., Amsterdam,
  1993)

\bibitem[DGHT19]{DeGuHoTi16}
Deya, A., Gubinelli, M., Hofmanov{\'a}, M. and Tindel, S.
\newblock \emph{A priori estimates for rough {PDE}s with application to rough
  conservation laws}.
\newblock J. Funct. Anal. \textbf{276} (2019)(12):3577--3645

\bibitem[Eke78]{MR540948}
Ekeland, I.
\newblock \emph{The {H}opf-{R}inow theorem in infinite dimension}.
\newblock J. Differential Geometry \textbf{13} (1978)(2):287--301.
\newblock \urlprefix\url{http://projecteuclid.org/euclid.jdg/1214434494}

\bibitem[Eng77]{Eng77}
Engelking, R.
\newblock \emph{General topology} (PWN---Polish Scientific Publishers, Warsaw,
  1977).
\newblock Translated from the Polish by the author, Monografie Matematyczne,
  Tom 60. [Mathematical Monographs, Vol. 60]

\bibitem[FH14]{FaH14}
Friz, P.~K. and Hairer, M.
\newblock \emph{A course on rough paths}.
\newblock Universitext (Springer, Cham, 2014).
\newblock \doi{10.1007/978-3-319-08332-2}.
\newblock With an introduction to regularity structures

\bibitem[FV06]{FaV06}
Friz, P. and Victoir, N.
\newblock \emph{A note on the notion of geometric rough paths}.
\newblock Probab. Theory Related Fields \textbf{136} (2006)(3):395--416.
\newblock \doi{10.1007/s00440-005-0487-7}

\bibitem[FV10]{FV10}
Friz, P.~K. and Victoir, N.~B.
\newblock \emph{Multidimensional Stochastic Processes as Rough Paths: Theory
  and Applications}.
\newblock Cambridge Studies in Advanced Mathematics (Cambridge University
  Press, 2010).
\newblock \doi{10.1017/CBO9780511845079}

\bibitem[Gl{\"{o}}02]{Glo02}
Gl{\"{o}}ckner, H.
\newblock \emph{Algebras whose groups of units are {L}ie groups}.
\newblock Studia Math. \textbf{153} (2002)(2):147--177.
\newblock \doi{10.4064/sm153-2-4}.
\newblock \urlprefix\url{https://doi.org/10.4064/sm153-2-4}

\bibitem[Gl{\"{o}}03]{Glo03}
Gl{\"{o}}ckner, H.
\newblock \emph{Lie groups of measurable mappings}.
\newblock Canad. J. Math. \textbf{55} (2003)(5):969--999.
\newblock \doi{10.4153/CJM-2003-039-9}

\bibitem[Gl{\"o}15]{Glo15}
Gl{\"o}ckner, H.
\newblock \emph{Measurable regularity properties of infinite-dimensional lie
  groups} 2015.
\newblock \urlprefix\url{https://arxiv.org/pdf/1601.02568.pdf}

\bibitem[GN12]{GaN12}
Gl\"{o}ckner, H. and Neeb, K.-H.
\newblock \emph{When unit groups of continuous inverse algebras are regular
  {L}ie groups}.
\newblock Studia Math. \textbf{211} (2012)(2):95--109.
\newblock \doi{10.4064/sm211-2-1}

\bibitem[HH18]{HH17}
Hocquet, A. and Hofmanov{\'a}, M.
\newblock \emph{An energy method for rough partial differential equations}.
\newblock Journal of Differential Equations \textbf{265} (2018)(4):1407--1466

\bibitem[HLN19a]{HLN19}
Hofmanova, M., Leahy, J.-M. and Nilssen, T.
\newblock \emph{On a rough perturbation of the navier-stokes system and its
  vorticity formulation}.
\newblock To appear in \emph{Annals of Applied Probability}  (2019).
\newblock \eprint{1902.09348}

\bibitem[HLN19b]{HLN18}
Hofmanov{\'a}, M., Leahy, J.-M. and Nilssen, T.
\newblock \emph{On the {N}avier-{S}tokes equation perturbed by rough transport
  noise}.
\newblock J. Evol. Equ. \textbf{19} (2019)(1):203--247.
\newblock \doi{10.1007/s00028-018-0473-z}.
\newblock \urlprefix\url{https://doi.org/10.1007/s00028-018-0473-z}

\bibitem[HN20]{HN}
Hocquet, A. and Nilssen, T.
\newblock \emph{An {I}t\^o formula for rough partial differential equations.
  application to the maximum principle}.
\newblock Potential analysis  (2020).
\newblock \doi{https://doi.org/10.1007/s11118-020-09830-y}

\bibitem[HNS20]{HNS}
Hocquet, A., Nilssen, T. and Stannat, W.
\newblock \emph{Generalized burgers equation with rough transport noise}.
\newblock Stochastic Processes and their Applications \textbf{130}
  (2020)(4):2159 -- 2184.
\newblock \doi{https://doi.org/10.1016/j.spa.2019.06.014}.
\newblock
  \urlprefix\url{http://www.sciencedirect.com/science/article/pii/S030441491830721X}

\bibitem[Hoc18]{hocquet2018quasilinear}
Hocquet, A.
\newblock \emph{Quasilinear rough partial differential equations with transport
  noise} 2018.
\newblock \eprint{1808.09867}

\bibitem[Kel74]{keller74}
Keller, H.~H.
\newblock \emph{Differential calculus in locally convex spaces}.
\newblock Lecture Notes in Mathematics, Vol. 417 (Springer-Verlag, Berlin-New
  York, 1974)

\bibitem[KM97]{KM97}
Kriegl, A. and Michor, P.~W.
\newblock \emph{Regular infinite-dimensional {L}ie groups}.
\newblock J. Lie Theory \textbf{7} (1997)(1):61--99

\bibitem[Kun97]{kunita1997stochastic}
Kunita, H.
\newblock \emph{Stochastic flows and stochastic differential equations},
  vol.~24 (Cambridge university press, 1997)

\bibitem[LCL07]{CaLaL07}
Lyons, T.~J., Caruana, M. and L\'{e}vy, T.
\newblock \emph{Differential equations driven by rough paths}, \emph{Lecture
  Notes in Mathematics}, vol. 1908 (Springer, Berlin, 2007).
\newblock Lectures from the 34th Summer School on Probability Theory held in
  Saint-Flour, July 6--24, 2004, With an introduction concerning the Summer
  School by Jean Picard

\bibitem[Lyo98]{Lyons98}
Lyons, T.~J.
\newblock \emph{Differential equations driven by rough signals}.
\newblock Rev. Mat. Iberoamericana \textbf{14} (1998)(2):215--310.
\newblock \doi{10.4171/RMI/240}

\bibitem[MV97]{MaV97}
Meise, R. and Vogt, D.
\newblock \emph{Introduction to functional analysis}, \emph{Oxford Graduate
  Texts in Mathematics}, vol.~2 (The Clarendon Press, Oxford University Press,
  New York, 1997).
\newblock Translated from the German by M. S. Ramanujan and revised by the
  authors

\bibitem[Nee06]{Neeb06}
Neeb, K.-H.
\newblock \emph{Towards a {L}ie theory of locally convex groups}.
\newblock Jpn. J. Math. \textbf{1} (2006)(2):291--468.
\newblock \doi{10.1007/s11537-006-0606-y}

\bibitem[Ran01]{Rand01}
Randrianantoanina, B.
\newblock \emph{Norm-one projections in {B}anach spaces}.
\newblock vol.~5, pp. 35--95 (2001).
\newblock \doi{10.11650/twjm/1500574888}.
\newblock International Conference on Mathematical Analysis and its
  Applications (Kaohsiung, 2000)

\bibitem[Ras38]{Ras38}
Rashevski\u\i, P.~K.
\newblock \emph{On the connectability of two arbitrary points of a totally
  nonholonomic space by an admissible curve}.
\newblock Uchen. Zap. Mosk. Ped. Inst. Ser. Fiz.-Mat. Nauk \textbf{3}
  (1938)(2):83--94

\bibitem[Reu93]{Reut93}
Reutenauer, C.
\newblock \emph{Free {L}ie algebras}, \emph{London Mathematical Society
  Monographs. New Series}, vol.~7 (The Clarendon Press, Oxford University
  Press, New York, 1993).
\newblock Oxford Science Publications

\bibitem[RS17]{RiSe17}
Rizzi, L. and Serres, U.
\newblock \emph{On the cut locus of free, step two {C}arnot groups}.
\newblock Proc. Amer. Math. Soc. \textbf{145} (2017)(12):5341--5357.
\newblock \doi{10.1090/proc/13658}.
\newblock \urlprefix\url{https://doi.org/10.1090/proc/13658}

\bibitem[Rya02]{Ryan02}
Ryan, R.~A.
\newblock \emph{Introduction to tensor products of {B}anach spaces}.
\newblock Springer Monographs in Mathematics (Springer-Verlag London, Ltd.,
  London, 2002).
\newblock \doi{10.1007/978-1-4471-3903-4}

\bibitem[Wae71]{Waelbroeck}
Waelbroeck, L.
\newblock \emph{Topological vector spaces and algebras}.
\newblock Lecture Notes in Mathematics, Vol. 230 (Springer-Verlag, Berlin-New
  York, 1971)

\end{thebibliography}

\end{document}